\numberwithin{equation}{section}
\theoremstyle{plain}
\newtheorem{theorem}{Theorem}
\newtheorem{conj}[theorem]{Conjecture}
\newtheorem{Thm}{Theorem}[section]
\newtheorem{Lem}[Thm]{Lemma}
\newtheorem{Prop}[Thm]{Proposition}
\newtheorem{Facts}[Thm]{Facts}
\theoremstyle{definition}
\newtheorem{Def}[Thm]{Definition}
\newtheorem{Rk}[Thm]{Remark}
\newtheorem{Ex}[Thm]{Example}
\newtheorem{Obs}[Thm]{Observation}
\newtheorem{Constr}[Thm]{Construction}
\newtheorem{Proc}[Thm]{Procedure}
\newcommand{\RSP}{regular system of parameters }
\newcommand{\wrt}{with respect to }
\newcommand{\half}{{\frac{1}{2}}}
\newcommand{\gqz}{{\geq 0}}
\newcommand{\sg}{\sigma}		
\newcommand{\lm}{\lambda}
\newcommand{\car}[1]{\mathrm{char}(#1)}		
\newcommand{\dell}[1]{ \frac{\partial }{ \partial #1 }}		
\newcommand{\cB}{\mathcal{B}}
\newcommand{\ID}{\mathbb{ D }}
\newcommand{\IE}{\mathbb{ E }}
\newcommand{\IF}{\mathbb{ F }}
\newcommand{\IG}{\mathbb{ G }}
\newcommand{\IQ}{\mathbb{ Q }}
\newcommand{\IR}{\mathbb{ R }}
\newcommand{\IT}{\mathbb{ T }}
\newcommand{\IZ}{\mathbb{ Z }}
\newcommand{\IRid}{{\IR}\mathrm{id}}
\newcommand{\IDir}{{\ID}\mathrm{ir}}
\newcommand{\cD}{\mathcal{ D }}
\newcommand{\cO}{\mathcal{ O }}
\newcommand{\ord}{{\mathrm{ord}}}
\newcommand{\Rid}{{\mathrm{Rid}}}
\newcommand{\Dir}{{\mathrm{Dir}}}
\newcommand{\Diff}{{\mathrm{Diff}}}
\newcommand{\Sing}{{\mathrm{Sing}}}
\newcommand{\Spec}{{\mathrm{Spec}}}
\newcommand{\TC}{T}
\newcommand{\ITC}{\mathbb{\TC}}
\newcommand{\leftmapsto}{\mathrel{\reflectbox{\ensuremath{\longmapsto}}}}
\begin{document}

\title{
Partial local resolution by characteristic zero methods}

\author{Bernd Schober}
\thanks{Supported by Research Fellowships of the Deutsche Forschungsgemeinschaft (SCHO 1595/1-1 and SCHO 1595/2-1).}
\address{Bernd Schober\\
	The Fields Institute\\ 
	222 College Street\\ 
	Toronto ON M5T 3J1\\ 
	Canada\\ 
	and 
	University of Toronto\\
	Department of Mathematics\\
	40 St. George Street\\
	Toronto, Ontario, M5S 2E4
	\\
	Canada}
\curraddr{
Institut f\"ur Algebraische Geometrie\\
Leibniz Universit\"at Hannover\\
Welfengarten 1\\
30167 Hannover\\
Germany}
\email{schober@math.uni-hannover.de}

\subjclass[2010]{Primary 14J17; Secondary 14B05, 14E15, 14M12.}
\keywords{Resolution of singularities, idealistic exponents, positive characteristic, determinantal varieties}

\begin{abstract}
	We discuss to what extent the local techniques of resolution of singularities over fields of characteristic zero can be applied to improve singularities in general.
	For certain interesting classes of singularities, this leads to an embedded resolution via blowing ups in regular centers.
	We illustrate this for generic determinantal varieties.
	The article is partially expository and is addressed to non-experts who aim to construct resolutions for other special classes of singularities in positive or mixed characteristic. 
\end{abstract}

\maketitle

%
%
%
%
%
%
%
%
%
%
%
%
%
%

\section*{Introduction}
\label{Intro}
The study of the geometry of a scheme may be quite hard if singularities appear.
The goal of resolution of singularities is to reveal the information hidden in the singularities.

\begin{conj}[Embedded Resolution of Singularities]
	\label{Conj:ERS}
	Let $ X \subset Z $ be a reduced closed subscheme of a quasi-excellent regular Noetherian scheme $ Z $.
	There exist a regular scheme $ Z' $ and a proper, birational morphism $ \pi : Z' \to Z $ such that
	\begin{enumerate}
		\item 
		the strict transform $ X'  \subset Z' $ of $ X $ in $ Z' $
		is regular,
		
		\item 
		$ \pi $ is an isomorphism outside of the singular locus,
		$ \pi^{-1}(Z \setminus \Sing(X)) \cong Z \setminus \Sing(X) $,
		
		\item 
		$ \pi^{-1} (\Sing(X))_{red} $ is a simple normal crossing divisor 
		(i.e., each irreducible component is regular and they intersect transversally)
		which intersects $ X' $ transversally.
	\end{enumerate}
\end{conj}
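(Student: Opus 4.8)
The plan is to follow the Hironaka-style local strategy: reduce the global statement to a local one by a gluing/functoriality argument, attach to $ X $ a resolution invariant that is upper semicontinuous on $ Z $, take its maximal-value locus as the center of the next blow-up, and show that after finitely many such blow-ups the invariant strictly drops. First I would replace $ X \subset Z $ by the completed local rings $ \cO_{Z,x} $ and work with the order $ \ord_{\cO_{Z,x}}(I_X) $ of the defining ideal, or more robustly with the Hilbert--Samuel function, which is upper semicontinuous along $ Z $ and whose maximal-value stratum $ \mathrm{Max} $ is the candidate center. Since $ \mathrm{Max} $ need not itself be regular, one does not blow it up directly; instead one sets up a descending induction on the ambient dimension $ \dim Z $, and to ensure the local constructions patch to the proper birational $ \pi $ of the statement one demands that the whole procedure be functorial (canonical, independent of local choices).

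The engine of that induction is maximal contact. Writing $ b = \ord_x(I_X) $, I would pass from $ I_X $ to the ideal generated by its $ \Diff $-operators of order $ \le b-1 $ to produce a regular hypersurface $ H \subset Z $ through $ x $ with the property that every point where $ \ord(I_X) $ remains maximal lies on $ H $, and such that $ H $ is preserved under the permissible blow-ups that keep the order equal to $ b $. This reduces the resolution of $ I_X $ to the resolution of a suitable coefficient ideal carried by $ H $, an object of ambient dimension one less, so the entire construction is driven by the inductive hypothesis. At each stage the center is read off as the maximal-value stratum of a composite invariant $ (\ord,\text{secondary terms},\dots) $ ordered lexicographically; by construction this stratum is regular, which yields condition (1), and by always choosing centers inside $ \Sing(X) $ one gets the isomorphism (2) away from the singular locus. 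Condition (3), the normal-crossings requirement on $ \pi^{-1}(\Sing(X))_{red} $ and its transversality with $ X' $, is arranged by enlarging the invariant with a counter of the accumulated exceptional components and by selecting centers that meet the existing exceptional divisor transversally. Termination follows by showing, via the transformation law for orders under blowing up and the inductive hypothesis on $ H $, that the invariant cannot strictly decrease infinitely often.

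The hard part — and the reason this is posed as a \textbf{conjecture} rather than a theorem — is the existence of maximal contact. Over a field of characteristic zero the hypersurface $ H $ comes for free from a Tschirnhausen-type change of coordinates, using that $ \partial/\partial x $ of a degree-$ b $ form is a unit multiple of a nonzero degree-$ (b-1) $ form; this is precisely where $ \car{K} = 0 $ enters. In positive and mixed characteristic this mechanism breaks down: purely inseparable directions obstruct the existence of any hypersurface of maximal contact, so the clean dimensional induction collapses and the invariant above need no longer drop. Consequently the plan as stated \emph{proves} the conjecture for quasi-excellent schemes containing $ \IQ $, while in the remaining cases it delivers only the \emph{partial local} improvement announced in the title: the characteristic-zero machinery still lowers the invariant in the generic situation, but, absent a substitute for maximal contact, it cannot be iterated all the way to a regular strict transform.
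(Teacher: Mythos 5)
The statement you were asked to prove is labeled a \emph{Conjecture} in the paper, and the paper contains no proof of it: it is Hironaka's theorem over fields of characteristic zero and an open problem in positive and mixed characteristic, which the paper states explicitly in the introduction. So there is no paper proof to compare against; what can be assessed is whether your sketch accurately reproduces the characteristic-zero argument and whether your concessions about its scope are correct. They are. Your outline --- an upper semicontinuous leading invariant, canonical/functorial choices so that local constructions glue into a global proper birational $ \pi $, a maximal contact hypersurface produced by applying $ \Diff $-operators of order $ \leq b-1 $ to the ideal, descent to a coefficient ideal on $ H $ with induction on ambient dimension, counters of exceptional components to secure condition (3), and termination via lexicographic well-ordering --- is precisely the machinery the paper itself reviews (pairs and coefficient pairs, Facts \ref{Facts:Id_Exponents}(3) and (5), maximal contact in Definition \ref{Def:max_contact}, the Bierstone--Milman invariant of Construction \ref{Constr.}). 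Your diagnosis of the obstruction is also the paper's: maximal contact need not exist when $ \car{k} = p > 0 $ (the paper recalls Narasimhan's threefold in characteristic $ 2 $ whose singular locus has embedding dimension four, so no regular hypersurface can contain it), which is exactly why the paper replaces the directrix by the ridge and proves only the partial statements, Theorems \ref{Thm:Equiv} and \ref{Thm:Reduction} and Procedure \ref{CharacterizationProcedure}, rather than the conjecture.

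Two caveats so that you do not overstate even the characteristic-zero case. First, the conjecture is formulated for arbitrary quasi-excellent regular Noetherian $ Z $; the functorial proofs you sketch are written for varieties (or excellent schemes) over a field of characteristic zero, and the extension to quasi-excellent schemes containing $ \IQ $ is a separate theorem of Temkin \cite{TemkinQEchar0} --- the passage through completed local rings that you invoke is exactly where the algebraization difficulties live, so this should be cited rather than absorbed into the induction. Second, for non-hypersurface $ X $ the order of $ I_X $ is not an adequate leading invariant: one must lead with the Hilbert--Samuel function and encode $ X $ locally by a standard basis as an intersection of pairs $ \bigcap_i (f_i, b_i) $ (Definition \ref{Def:standardbasis(2)}, Remark \ref{Rk:IE_to_x}), so that resolving the intersection pair forces the Hilbert--Samuel function to drop; your text mentions Hilbert--Samuel but then runs the engine on a single order $ b $, which only suffices in the hypersurface case. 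With those repairs your write-up is a faithful summary of the known proof where a proof exists; as a proof of the statement as posed it is necessarily incomplete, as you yourself acknowledge --- the positive and mixed characteristic cases are what remain open, and they are the target of the paper's partial reduction results rather than of any purported proof in it.
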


Over fields of characteristic zero the conjecture is true due to Hironaka's celebrated theorem \cite{Hiro64}.
Several people have studied and improved his result so that nowadays there exists a quite accessible proof for canonical constructive resolution of singularities in characteristic zero, 
\cite{BMheavy, BMlight,  AnaSantiOrlando05, Cu, SantiHerwig,EleonoreHerwig,HerwigChar0,  Kollar, BerndBM, TemkinQEchar0, OrlandoConstr, OrlandoPatch, Jarek}.
The latter means that $ Z' \to Z $ is obtained by a finite sequence of blowing ups in canonical regular centers.
Let us mention that there also exist implementations of the resolution algorithm in characteristic zero \cite{BodnarSchicho, AnnePfister}.

In contrast to this, 
there are only results in small dimensions,
\cite{AbhSurfaces, Abh5, Abh6, Abh7, Abh8, AngOrlDim2, CosToho, CGO, CJS, CS_Dim2,Cut3fold,CutSurf, HerwigWagner, HiroBowdoin, IFPdim2new, LipDim2},
or for special cases,
\cite{Complexity, BMbinomial, Blanco1, Blanco2, VincentPetite, CPsmallmulti,KollarToroidal,OrlandoNew},
in positive or mixed characteristic. 
For threefolds, Cossart and Piltant proved the existence of a global birational model $ X' \to X $ without an embedding  (first over differentially finite fields \cite{CP1, CP2} and then in the arithmetic case \cite{CPsmallmulti, CPmixed, CPmixed2}).
The general case of resolution in dimension four and higher, and embedded resolution in dimension three remain very difficult problems.
For more details on difficulties that need to be overcome, we refer to \cite{HerwigCharPos, HerwigStefan1, HerwigStefan2, Moh1}. 

A local variant of resolution of singularities, so called local uniformization, is studied in \cite{KnafKuhlmann, Moh2, NovaMarkReduction1,NovaMarkReduction2,Jean-Christophe,BernardAbhy,TemkinInsepLU}, for example.
Further, if one requires $ X' \to X $ only to be generically finite and not birational, de Jong \cite{deJong} showed the existence of such a model in any dimension.
Later, this has been refined by Gabber \cite{IllusieTemkin} and by Temkin \cite{TemkinAnnals}.

The purpose of the present article is to show how central techniques of characteristic zero are interesting in positive or mixed characteristic and can be developed so that they can be applied to certain classes of singularities.
Non-trivial examples are the resolution of varieties defined by binomial ideals (\cite{BMbinomial}, \cite{Blanco1}, \cite{Blanco2}), or of generic determinantal varieties (section 5).
\\[3pt]
\indent 
The local resolution data of $ X \subset Z $ at a point $ x \in X $ can be encoded in a {\em pair} $ \IE = (J,b) $ on the regular local ring $ R = {\cO}_{Z,x} $, for an ideal $ J \subset R $ (connected to the ideal locally defining $ X $ at $ x $)
and a positive rational number $ b \in \IQ_+ $.
In particular, we are interested in its singular locus $ \Sing(\IE) $ which is the set of those points in $ X $, where $ J $ has at least order $ b $ (Definition \ref{Def:idexp}).
The goal is to find a sequence of blowing ups such that the transform of $ \IE $ has no points of order $ b $.
(In section 1, we explain more details on this).

It is natural to identify pairs which undergo the same resolution process.
Thus we obtain an equivalence relation on the set of pairs on $ R $ for which an equivalence class is called {\em idealistic exponent}. 
In Theorem \ref{Thm:ReductionIntro} we address the task of finding an appropriate representative of an idealistic exponent which reveals the nature of the singularities of $ \IE $.

Before we can state the results, we need to introduce some technical objects that play a central role in the article.
Suppose $ \Sing(\IE) \neq \varnothing $.
Let $ M $ be the maximal ideal of $ R $.
A first approximation for
the singularity of 
$ \IE= (J,b) $ at $ M $ is given by the \textit{tangent cone pair} $ \IT_M( \IE) = (In_M(J,b),b) $ (Definition \ref{Def:idealistic_tangent_cone_etc}).
It is a pair on the  associated graded ring of $ R $, $ gr_M(R) := \bigoplus_{i \geq 0} M^i/M^{i+1} $
(which is isomorphic to a polynomial ring over the residue field),
and $ In_M(J,b) $ is a homogeneous ideal generated by the $ b $-initial forms of elements of $ J $ (Definition \ref{Def:Tangent}).
From this, we obtain a cone $ C := \Spec (gr_M(R)/ In_M(J,b)) $.

Associated to $ C $ there is the \textit{directrix} $ \Dir_M(\IE) $ of $ \IE $ (at $ M $).
This is the largest sub-vector space $ V $ of $ \Spec(gr_M(R)) $ leaving the cone $ C $ stable under translation, $ C + V  = C $
(Defintion \ref{Def:Dir_Rid}(1)).
The significance of the directrix appears if the characteristic of the residue field is either zero or larger than $ \dim(X)/2+1 $.
In this case, the points, where the singularity does not improve after blowing up a permissible center, have to lie in a projective space associated to the directrix
(see \cite{CJS} Theorem 2.14 which is based on work by Hironaka \cite{HiroCertain} Theorem IV, \cite{HiroAdditive} Theorem 2, and Mizutani \cite{Miz}). 
Moreover, over fields of characteristic zero, the directrix has a strong connection to subvarieties of maximal contact (Definition \ref{Def:max_contact}).
The latter is a regular subvariety $ H \subset \Spec(R) $ that contains $ \Sing(\IE) $ and the inclusion remains true for the respective transforms under permissible blowing ups, i.e., $ \Sing(\IE') \subset H' $.
This allows to reduce the resolution problem to one in a smaller dimensional ambient space, namely in $ H $.
This is a crucial step in proving resolution of singularities in characteristic zero.

While maximal contact locally always exists in characteristic zero, 
this is not true in general.
This leads to the notion of the \textit{ridge} (or {\em fa\^ite} in French literature) 
$ \Rid_M(\IE)$ of $ \IE$ (at $ M $), a generalization of the directrix
(see Giraud's work \cite{GiraudEtude, GiraudMaxPos}).
The ridge associated to the cone $ C $ above is the largest additive subgroup of $ \Spec(gr_M(R)) $ leaving $ C $ stable under translation
(Definition \ref{Def:Dir_Rid}(2)).
Without any restriction on the characteristic, the ridge provides information on the locus of points where the singularity does not improve after blowing up a permissible center.
Unfortunately, one only achieves ``singular subvarieties of maximal contact".
We explain this in more detail below since one of the consequences of Theorem \ref{Thm:ReductionIntro} is a variant of this in the idealistic setting.

	The ridge $ \Rid_M (\IE) $ is defined by certain additive polynomials $ \sigma_1, \ldots, \sigma_s $,
	where $ \sigma_i $ is homogeneous of degree $ q_i := p^{d_i} $ and $ q_1 \leq \ldots \leq q_s $,
	for $ p = \car{k} \geq 0 $.
	(In characteristic zero, the only additive polynomials are those homogeneous of degree one and hence $ q_i = 1 $ for all $ i $).
	This provides the pair $ \IRid_M (\IE) := (\sigma_1, q_1) \cap \ldots \cap (\sigma_s ,q_s) $ living in $ gr_M(R) $.
	(For the definition of an intersection of pairs see \eqref{eq:intersect_pairs}).
	For a suitable choice of $ ( \sigma ) $, we have that for every $ \sigma_i $ there exists a differential operator $ \cD_i $ on $ gr_M(R) $ of order $ b - q_i $ and some $ F_i \in In_M(J,b) $ such that
\begin{equation}
\label{eq:diff_sigma}
		\cD_i F_i = \sigma_i.
\end{equation}
	The $ \cD_i $ are given by Hasse-Schmidt derivatives $ \dell{Y^B} $ (see \eqref{eq:HasseSchmdit}), where $ ( Y ) = (Y_1, \ldots, Y_r ) $ is a subset of variables in $ gr_M(R) $ defining $ \Dir_M(\IE) $.
	(For the meaning of the latter condition, see Definition \ref{Def:(3)_defines directrix_ridge}(3)).

	\begin{theorem}[Theorem \ref{Thm:Equiv}]
		\label{Thm:EquivIntro} 
		Let $ R $ be a regular local ring with maximal ideal $ M $ and $ \IE $ be a pair on $ R $.
		Suppose that $ \Sing( \IE) \neq \varnothing $.
		Then the pairs $  \IT_M (\IE) $  and $ \IRid_M(\IE) $ are equivalent.		
	\end{theorem}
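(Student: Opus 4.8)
The plan is to establish the two dominations $ \IT_M(\IE) \Rightarrow \IRid_M(\IE) $ and $ \IRid_M(\IE) \Rightarrow \IT_M(\IE) $, where $ \IE_1 \Rightarrow \IE_2 $ means that $ \Sing(\IE_1^T) \subseteq \Sing(\IE_2^T) $ for every sequence of permissible transformations $ T $; having both forces $ \Sing(\IE_1^T) = \Sing(\IE_2^T) $ for all $ T $, which is exactly equivalence. Since the equivalence of pairs is generated by the basic operations recalled in Section~1 --- notably the differential operation $ \IE \sim \bigcap_{a \geq 0} (\Diff^{a}(J), b-a) $ together with the product and sum rules --- I would realise each domination through these operations, so that stability under transformation is automatic and no separate persistence argument is required.

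For $ \IT_M(\IE) \Rightarrow \IRid_M(\IE) $ I would use the differential relation \eqref{eq:diff_sigma} directly. Since $ \cD_i $ has order $ b - q_i $ and $ F_i \in In_M(J,b) $, we get $ \sigma_i = \cD_i F_i \in \Diff^{b-q_i}(In_M(J,b)) $. Applying the differential operation to the tangent cone pair gives $ \IT_M(\IE) \sim \bigcap_{a \geq 0}(\Diff^{a}(In_M(J,b)), b-a) $, and the factor indexed by $ a = b - q_i $ is $ (\Diff^{b-q_i}(In_M(J,b)), q_i) $, which dominates the principal pair $ (\sigma_i, q_i) $ because $ \sigma_i $ belongs to that ideal. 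Intersecting over $ i = 1, \ldots, s $ yields $ \IT_M(\IE) \Rightarrow \bigcap_{i=1}^{s}(\sigma_i, q_i) = \IRid_M(\IE) $.

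The reverse domination is the substantial direction. By the definition of the ridge, the cone $ C = \Spec(gr_M(R)/In_M(J,b)) $ is stable under translation by the additive group $ \Rid_M(\IE) = \bigcap_i \ker(\sigma_i) $, so $ In_M(J,b) $ is generated by polynomials invariant under this translation. The key structural input, due to Giraud, is that the ring of such invariants inside $ gr_M(R) $ is precisely $ k[\sigma_1, \ldots, \sigma_s] $. Hence $ In_M(J,b) $ admits homogeneous generators $ g_1, \ldots, g_m \in k[\sigma_1, \ldots, \sigma_s] $, each of degree $ \geq b $ (we are in the case $ M \in \Sing(\IE) $, so $ \ord_M(J) \geq b $); with $ \deg(\sigma_i) = q_i $, every $ g_\ell $ is a sum of monomials $ \prod_i \sigma_i^{a_i} $ with $ \sum_i a_i q_i = \deg(g_\ell) $. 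The product rule gives $ \IRid_M(\IE) \Rightarrow (\prod_i \sigma_i^{a_i}, \sum_i a_i q_i) $ for each such monomial, and then the sum rule, after weakening the order to $ b $, gives $ \IRid_M(\IE) \Rightarrow (g_\ell, b) $. As the $ g_\ell $ generate $ In_M(J,b) $, we conclude $ \IRid_M(\IE) \Rightarrow (In_M(J,b), b) = \IT_M(\IE) $.

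I expect the main obstacle to be precisely this structural statement: that the translation invariants of $ gr_M(R) $ under the ridge are generated by the additive polynomials $ \sigma_1, \ldots, \sigma_s $. In characteristic zero it is elementary, the ridge coinciding with the directrix and the $ \sigma_i $ being linear forms, so the invariants are simply polynomials in those forms. In positive characteristic it genuinely relies on Giraud's analysis of the ridge and on the additive, $ p $-power-degree nature of the $ \sigma_i $, and it is here that the interplay with \eqref{eq:diff_sigma} must be handled with care. Granting this input, the two dominations combine to give $ \IT_M(\IE) \sim \IRid_M(\IE) $.
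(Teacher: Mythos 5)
Your overall architecture --- prove $ \IT_M(\IE) \Rightarrow \IRid_M(\IE) $ by differentiation, and $ \IRid_M(\IE) \Rightarrow \IT_M(\IE) $ from generators lying in $ k[\sigma_1, \ldots, \sigma_s] $ --- is exactly the shape of the paper's proof of Theorem \ref{Thm:Equiv}, which establishes $ \ITC(\IE) \sim \ITC(\IE) \cap \IRid(\IE) $ via Hasse--Schmidt operators and Facts \ref{Facts:Id_Exponents}(3), and then drops the factor $ (In_M(J,b), b) $ using Facts \ref{Facts:Id_Exponents}(1) and (2). But you have mislocated the difficulty. The statement you single out as the ``key structural input'' for the reverse direction is, in this paper's framework, essentially definitional: \eqref{eq:ridge} in Definition \ref{Def:Dir_Rid}(2) says directly that $ In_M(J,b) $ is generated by its intersection with $ k[\sigma_1, \ldots, \sigma_s] $, so no invariant-theoretic theorem needs to be proved at that point (the agreement of this algebraic formulation with the translation-group description is Giraud's theory, taken as given). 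Minor point: the generators are homogeneous of degree exactly $ b $, being $ b $-initial forms, not merely of degree $ \geq b $. With that, your product/sum/weakening dominations are fine --- the one-sided versions of Facts (1) and (2) hold unconditionally and persist under transforms --- and this half coincides with the paper's concluding step.

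The genuine gap is in your first direction. You treat \eqref{eq:diff_sigma} as an available input, but in this paper \eqref{eq:diff_sigma} is proved nowhere outside the proof of Theorem \ref{Thm:Equiv} itself: the introduction states it (``for a suitable choice of $ (\sigma) $ \ldots'') precisely as a preview of what that proof constructs, so quoting it is circular. All of the positive-characteristic substance of the theorem lies in producing the operators $ \cD_i $ together with a compatible defining system $ (\sigma) $: one puts $ (\sigma) $ in triangular form \eqref{eq:triangular}, expands a generator as $ F = \sum C_{A,d} \, \tau^d \sigma_+^A $ with $ \tau = \sigma_1 $, uses the minimality of $ (\sigma) $ as a system defining the ridge to guarantee a generator with an exponent $ d_0 $ prime to $ p $ (if every exponent of $ \tau $ were divisible by $ p $, then $ \tau $ could be replaced by $ \tau^p $, contradicting the defining property), applies $ \cD_\beta $ with $ \beta = q(d_0 - 1) $ followed by $ \cD_0 $, controls the binomial coefficients via Lucas' theorem \cite{Lucas}, inverts $ d_0 $, and finally replaces $ \tau $ by $ \tau + s_+ $ to absorb the remainder while keeping a triangular system that still defines the ridge; the whole step is then iterated over $ \sigma_2, \ldots, \sigma_s $. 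In particular, for an arbitrary minimal system $ (\sigma) $ the relation $ \cD_i F_i = \sigma_i $ can simply fail; the operators exist only for a suitably corrected system, and showing that this correction is possible is the heart of the proof. If you intend instead to outsource this to Giraud's computation of the ridge by derivations (\cite{GiraudEtude}, cf.~\cite{BHM}), you must say so explicitly and verify that it yields \eqref{eq:diff_sigma} in this setting; as written, the step you take for granted is the theorem's core.
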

	
	In other words, the theorem states that all information on the singularity of $ \IE $ at $ M $ that we can obtain from the tangent pair $ \IT_M (\IE) $ is concentrated in $ \IRid_M(\IE) $.

	One of the key ingredients for the proof is \eqref{eq:diff_sigma}.
	In order to obtain the desired representative for the idealistic exponent corresponding to $ \IE $, 
	we want to translate the arguments for the proof of the previous theorem from the graded ring $ gr_M(R) $ to $ R $. 
	For this, we need to be able to lift the differential operators $ \cD_i $ (on $ gr_M(R) $) to $ R $.
	Hence, we have to impose the following assumption:
	Let $ ( u, y  ) = (u_1, \ldots, u_e; y_1, \ldots, y_r ) $ be a regular system of parameters for $ R $ such that $ ( Y ) $ defines the directrix of $ \IE $, where $ Y_j := y_j \mod M^2 \in gr_M(R) $, for $ 0 \leq j \leq r $.
	We say {\em condition $(\cD)$ holds for $ ( \IE,  R ) $} if
	\begin{equation}\tag{${\mathcal{D}}$}
		\label{condD}
		\left\{ \;\;\,
		\parbox[c]{320pt}{ 
			the Hasse-Schmidt derivatives 
			$ \dell{Y^B} : gr_M(R) \to gr_M(R) $
			lift to Hasse-Schmidt derivatives 
			$ \dell{y^B} : R \to R $ 
			on $ R $, for every $ B \in \IZ^{r}_\gqz $ with $ |B| < b $. 
		} 
		\right.
	\end{equation}
	
	%
	\noindent 
	For example, this condition is fulfilled for any $ \IE $ if $ R = S[[y]]$, or if $ R = S[y]_M $, where $ S $ is a regular local ring with maximal ideal $ N $ and $ M := N + \langle y \rangle
	\subset S[y] $.
	(Note that this includes the case $ R = k[x_1, \ldots, x_n ]_{\langle x_1, \ldots, x_n \rangle} $ for a field $ k $).
	Another large class of examples is provided by Nomura's theorem (see \cite{MatsumuraRing} Theorem 30.6 p.~237) which implies \eqref{condD} 
	if $ R $ is equi-characteristic of dimension $ n $ fulfilling that the module of $ k $-derivations $ \mbox{Der}_k (R) $ is a free $ R $-module of rank $ n $, where $ k $ is a quasi-coefficient field of $ R $
	(see also the cited theorem for further equivalent conditions).

	\begin{theorem}[Theorem \ref{Thm:Reduction}]
		\label{Thm:ReductionIntro}
			Let $ R $ be a regular local ring with maximal ideal $ M $ and residue characteristic $ p = \car{R/M} \geq 0 $.
			Let $ \IE $ be a pair on $ R $
			and, using the above notations, let $ \IRid_M(\IE) = \bigcap_{i=1}^s (\sigma_i , q_i ) $.
			Suppose that $ \Sing( \IE) \neq \varnothing $ and that condition \eqref{condD} holds for $ (\IE, R) $.
		Then $ \IE $ is equivalent to a pair of the form $ \IG \cap \ID_+ $ with 
		\begin{enumerate}
			\item 
			$ \IG = \bigcap\limits_{i=1}^s ( g_i, q_i ) $,
			for some $ g_i \in  R $ such that $ g_i \equiv  \sigma_i \mod  M^{q_i + 1} $,
			and
			
			\item 
			$ \ID_+ = (I, c) $, for some ideal $ I \subset R $ and $ c \in \IZ_+ $ with $ \ord_{M} (I) > c $.
		\end{enumerate}
	\end{theorem}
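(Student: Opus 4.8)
The plan is to transfer the proof of Theorem~\ref{Thm:EquivIntro}, which lives in the graded ring $gr_M(R)$, down to the local ring $R$ itself; condition \eqref{condD} is exactly the input that makes this transfer possible. \emph{Step 1 (construction of $\IG$).} For each $i$, relation \eqref{eq:diff_sigma} supplies a differential operator $\cD_i$ on $gr_M(R)$ of order $b-q_i$ and an element $F_i \in In_M(J,b)$ with $\cD_i F_i = \sigma_i$. Expressing $\cD_i$ through the Hasse--Schmidt derivatives $\dell{Y^B}$ with $|B| \le b - q_i < b$ (recall $q_i = p^{d_i} \ge 1$) and invoking \eqref{condD}, I lift $\cD_i$ to a differential operator $\widetilde{\cD}_i : R \to R$ of the same order. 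Since $In_M(J,b)$ is generated by $b$-initial forms of elements of $J$, I may choose $\widetilde{F}_i \in J$ with $b$-initial form $F_i$, and set $g_i := \widetilde{\cD}_i(\widetilde{F}_i) \in R$. Because $\widetilde{\cD}_i$ lifts $\cD_i$ compatibly with the $M$-adic filtration, the $q_i$-initial form of $g_i$ is $\cD_i F_i = \sigma_i$; hence $g_i \equiv \sigma_i \mod M^{q_i+1}$, which is assertion~(1), and I define $\IG := \bigcap_{i=1}^s (g_i, q_i)$.

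\emph{Step 2 ($\IE \sim \IE \cap \IG$).} By construction $g_i \in \Diff^{(b-q_i)}(J)$. Using the basic differential operation on pairs, $\IE \sim \IE \cap (\Diff^{(t)}(J), b-t)$ for every $t < b$, it follows that $(g_i,q_i) \supseteq \IE$ as idealistic exponents for each $i$, and therefore $\IE \sim \IE \cap \IG$. Observe moreover that $\IT_M(\IG) = \bigcap_i(\sigma_i,q_i) = \IRid_M(\IE)$, so by Theorem~\ref{Thm:EquivIntro} the pairs $\IE$ and $\IG$ have equivalent tangent cone pairs; this is the graded shadow of the decomposition I am after, and $\ID_+$ will account for the part of $\IE$ invisible in the tangent cone.

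\emph{Step 3 (extraction of $\ID_+$).} Here I use the structural fact underlying Theorem~\ref{Thm:EquivIntro}, that every $b$-initial form $F \in In_M(J,b)$ is a weighted-homogeneous polynomial $P_F(\sigma_1,\dots,\sigma_s)$ of degree $b$, where $\deg\sigma_i = q_i$. Fixing generators $f^{(1)},\dots,f^{(m)}$ of $J$ whose $b$-initial forms generate $In_M(J,b)$, I lift each such representation by substituting $g_i$ for $\sigma_i$: set $h_l := f^{(l)} - P_{f^{(l)}}(g_1,\dots,g_s)$. Since $g_i$ has $q_i$-initial form $\sigma_i$ and $P_{f^{(l)}}$ is weighted homogeneous of degree $b$, the $b$-initial form of $P_{f^{(l)}}(g_\bullet)$ equals $P_{f^{(l)}}(\sigma_\bullet) = In_M(f^{(l)},b)$, so $\ord_M(h_l) > b$. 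I put $I := \langle h_1,\dots,h_m\rangle$, $c := b$ and $\ID_+ := (I,c)$, giving $\ord_M(I) > c$, which is assertion~(2). To finish I check $\IE \cap \IG \sim \IG \cap \ID_+$: each monomial $\prod_i g_i^{a_i}$ occurring in $P_{f^{(l)}}(g_\bullet)$ has $\sum_i a_i q_i = b$, so $\ord(\prod_i g_i^{a_i}) \ge \sum_i a_i\,\ord(g_i)$ forces $\Sing(\IG) \subseteq \Sing(\prod_i g_i^{a_i}, b)$, an inclusion persisting under permissible blowing ups by the product operation on pairs; hence $\IG \sim \IG \cap (P_{f^{(l)}}(g_\bullet),b)$. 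Consequently $f^{(l)}$ may be replaced by $h_l$ without altering the class of $(f^{(l)},b)\cap\IG$, so $(f^{(l)},b)\cap\IG \sim (h_l,b)\cap\IG$. Intersecting over $l$ and using $(J,b)\sim\bigcap_l(f^{(l)},b)$ together with $(I,b)\sim\bigcap_l(h_l,b)$ yields $\IE \cap \IG \sim \ID_+ \cap \IG$; combined with Step~2 this proves $\IE \sim \IG \cap \ID_+$.

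I expect the main obstacle to lie in the compatibility claims of Steps~1 and~2, rather than in the mostly formal bookkeeping of Step~3. Concretely, one must verify that the operator $\widetilde{\cD}_i$ furnished by \eqref{condD} genuinely has $\sigma_i$ as its $q_i$-initial form, so that $g_i \equiv \sigma_i \mod M^{q_i+1}$ holds on the nose, and that the membership $g_i \in \Diff^{(b-q_i)}(J)$ — hence the inclusion $\IE \subseteq (g_i,q_i)$ — is preserved along \emph{every} sequence of permissible blowing ups and not only at $M$. The weighted-homogeneous representation $F = P_F(\sigma_\bullet)$ invoked in Step~3 is imported directly from the graded proof of Theorem~\ref{Thm:EquivIntro}; the one genuinely new feature over $R$ is the remainder $h_l$ of order strictly larger than $b$, which is precisely the data recorded by $\ID_+$.
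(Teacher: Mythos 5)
Your proposal is correct and takes essentially the same route as the paper: the paper's proof of Theorem \ref{Thm:Reduction} likewise uses condition \eqref{condD} to lift the Hasse--Schmidt derivatives from the proof of Theorem \ref{Thm:Equiv} to $ R $, obtaining $ \IE \sim \IG \cap \IE $ via the Diff Theorem (your Steps 1--2), and then eliminates the initial parts with Facts \ref{Facts:Id_Exponents}(1) and (2) to pass to $ \IG \cap \ID_+ $ (your Step 3). Your write-up is simply a more detailed expansion of the paper's one-paragraph argument, including the correct one-directional use of the product operation where the paper cites Facts \ref{Facts:Id_Exponents}(2).
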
 

	If $ R $ contains a perfect field $ k $ this already appears in \cite{HiroIdExp}, where it is called the Tschirnhausen decomposition of $ \IE $ 
	(loc.~cit.~Definition 3 and Theorem 4).

	Theorem \ref{Thm:ReductionIntro} is a translation of Theorem \ref{Thm:EquivIntro} from the graded ring to $ R $.
	In particular, we see how the ridge yields ``singular maximal contact" in the idealistic framework:	
	Using facts about pairs, Theorem \ref{Thm:ReductionIntro} implies 
	that $ \Sing(\IE) \subset \bigcap_i \Sing(g_i, q_i) $ and 
	this inclusion is stable under permissible blowing ups, since the equivalence is.
	Hence we get some control on the behavior of $ \Sing (\IE) $ in the sense that $ V (g_1, \ldots, g_s ) $ (which we obtain from condition (1)) is a subvariety of singular maximal contact.

	Secondly, we can deduce from the theorem (more precisely, by studying the pair in (1)) 
	whether it is possible to reduce the resolution problem to one in smaller dimension:
	If we have $ q_1 = 1 $, then $ V(g_1) = \Spec(R/g_1) $ is regular and has maximal contact
(in the characteristic zero sense).
	Therefore, using the technique of coefficient pairs (Definition \ref{Def:IdCoeffExp}), we can reduce the resolution problem for $ \IE $ on $ R $ to the one for 
	some pair $ \IE_*  = (J_*, b_* )$ on $ R/g_1 $.
	Since $ \dim (R/g_1) < \dim (R) $, it is possibly easier to construct a sequence of blowing ups in regular centers resolving $ \IE_* $.
	If we find such a sequence, the maximal contact condition implies that the same sequence resolves $ \IE $.
	In Procedure \ref{CharacterizationProcedure}, we describe a more general version of the reduction principle.
	
	As a third consequence, we see which kind of pairs need to be understood in order to make progress on resolution of singularities:
	We can read off when a reduction in the sense of characteristic zero is not possible anymore.
	For example, as it is known to experts, an important case that has to be understood is how to resolve hypersurfaces of the form
	$$ 
		V( y^{q} + h(u_1,u_2,u_3,y) ), 
		\ \ \ \mbox{ for } h \in M^{q+1} 
		\ \ \mbox{ and } \ \ q = p^d, \ d \geq 2.
	$$

	\smallskip 	

	As an application, we construct an embedded resolution of singularities for generic determinantal varieties $ X_{m,n,r} $ which are defined  by the $ r \times r $ minors of a generic $ m \times n $ matrix (i.e., whose entries are independent variables), $ r \leq m \leq n \in \IZ_+ $.  
	This is a slight generalization of a result by Vainsencher \cite{DetermResol} to the case of non-algebraically closed fields and even mixed characteristic. 
	More precisely, we prove

\begin{theorem}[Theorem \ref{Thm:ResDetText}]
	\label{Thm:ResDet}
	Let $ m , n, r \in \IZ_+ $ with $ r \leq m \leq n $, let $ R_0 $ be a regular ring, and let $ M = (x_{i,j})_{i,j} $ be a generic $ m \times n $ matrix.
	The following sequence of blowing ups provides an embedded resolution of singularities for the generic determinantal variety $ X_{m,n,r} \subset Z = \Spec(R_0[x_{i,j}\,|\,i,j\,]) $,
	$$ 
	Z =: Z_0 \stackrel{\pi_1}{\longleftarrow} Z_1
	\stackrel{\pi_2}{\longleftarrow} 
	\ldots 
	\stackrel{\pi_{r-1}}{\longleftarrow} Z_{r-1}, 
	$$  
	where $ \pi_\ell $ is the blowing up with center the strict transform of $ X_{m,n,\ell} $ in $ Z_{\ell-1} $, $ 1 \leq \ell \leq r - 1 $.
\end{theorem}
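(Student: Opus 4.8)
The plan is to argue by induction on $r$, the claim being that for all $m,n$ with $r \le m \le n$ and every regular base ring $R_0$, the stated sequence is an embedded resolution of $X_{m,n,r}$ in the sense of Conjecture \ref{Conj:ERS}. Two observations make the induction self-supporting. First, by the classical fact that $\Sing(X_{m,n,r}) = X_{m,n,r-1}$, every center $X_{m,n,\ell}$ with $\ell \le r-1$ lies in $\Sing(X_{m,n,r})$; this immediately yields condition (2) of the conjecture for the composite. Second, the regularity of the $\ell$-th center is nothing but the statement that $\pi_1, \ldots, \pi_{\ell-1}$ resolve $X_{m,n,\ell}$, i.e. the inductive hypothesis for the parameter $\ell < r$; hence each $\pi_\ell$ is a blowing up in a regular center and each $Z_\ell$ is regular. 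It therefore remains to verify, locally at every point of $Z_{r-1}$, that the strict transform of $X_{m,n,r}$ is regular and that the accumulated exceptional divisors form a simple normal crossings divisor meeting it transversally.

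Since all of these are local (indeed formal) conditions, I would fix a point $x$ and let $k$ be the rank of $(x_{i,j})$ there, so $0 \le k \le r-1$ (points of rank $\ge r$ are off $X_{m,n,r}$, and points of rank exactly $r-1$ are already regular). The geometric heart is a local product structure obtained by Gaussian elimination: at a point of rank exactly $k$ some $k\times k$ minor is a unit in the local ring, and clearing the corresponding rows and columns passes to the Schur complement, a generic $(m-k)\times(n-k)$ matrix in new regular coordinates. This identifies a neighborhood of $x$, compatibly for the whole nested family, with $\IA^N \times X_{m-k,n-k,\ell-k}$ for $\ell \ge k$ (while $x \notin X_{m,n,\ell}$ for $\ell \le k$). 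Crucially, the smooth factor $\IA^N$ can be absorbed into an enlarged regular base ring $R_0' = R_0[\,\text{smooth coordinates}\,]$; this is precisely why the theorem is stated for an arbitrary regular $R_0$, and it is what lets the induction close. For $k \ge 1$ the centers $X_{m,n,1}, \ldots, X_{m,n,k}$ avoid $x$, so $\pi_1, \ldots, \pi_k$ are isomorphisms near the unique lift of $x$, and the remaining blowing ups become, under the product identification, the resolution sequence for $X_{m-k,n-k,r-k}$ over $R_0'$; since $r-k < r$, the inductive hypothesis finishes this case.

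The remaining, and decisive, case is the deepest stratum $k = 0$, of points lying over the origin $X_{m,n,1}$. Here I would compute $\pi_1$ explicitly in the affine chart dominated by an entry $x_{a,b}$: writing $x_{i,j} = t\,\xi_{i,j}$ with $\xi_{a,b}=1$ and $t$ the exceptional coordinate, every $\ell\times\ell$ minor acquires a factor $t^{\ell}$, so the strict transform of $X_{m,n,\ell}$ is cut out by the $\ell\times\ell$ minors of $(\xi_{i,j})$. As $\xi_{a,b}$ is a unit, the same Schur-complement reduction shows this strict transform is $\IA^{N} \times X_{m-1,n-1,\ell-1}$, with the exceptional divisor $\{t=0\}$ a coordinate hyperplane of the smooth factor. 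Absorbing $\IA^N$ into the base, the subsequent blowing ups $\pi_2, \ldots, \pi_{r-1}$ become exactly the resolution sequence for $X_{m-1,n-1,r-1}$; by the inductive hypothesis ($r-1 < r$) they make its strict transform regular. The divisor $\{t=0\}$, being a smooth product factor transverse to the determinantal factor in which all later centers sit, stays transverse to the strict transform and crosses the later exceptional divisors normally. The base cases $r=1$ (the origin, already regular) and $r=2$ (one blowing up, after which the chart computation gives $X_{m-1,n-1,1}=$ the origin, hence a regular strict transform) start the induction.

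I expect the main obstacle to lie entirely in the local analysis rather than in the combinatorial skeleton above, and to be twofold. The first difficulty is making the Gaussian-elimination/product structure rigorous over an arbitrary regular ring $R_0$, in particular over mixed-characteristic bases and without algebraic closure: one must phrase the ``change of coordinates'' as an honest automorphism of the relevant localized or completed regular local ring and check that the Schur complement is again a generic matrix over the enlarged regular base. This is where the framework of the earlier sections is convenient, since the local reduction is precisely an instance of the reduction principle via coefficient pairs (Definition \ref{Def:IdCoeffExp}, Procedure \ref{CharacterizationProcedure}), with the role of ``singular maximal contact'' played by the explicit determinantal subvarieties produced by Theorem \ref{Thm:ReductionIntro}. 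The second, more tedious obstacle is the persistent bookkeeping of the exceptional divisors: one must track that each $E_\ell$ remains simple normal crossings and transverse to the strict transform under all later blowing ups. Both follow formally once the product decomposition is in place, so the real work is to set up that decomposition carefully and uniformly in the inductive step.
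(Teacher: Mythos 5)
Your skeleton coincides with the paper's proof: induction on $r$, regularity of each center extracted from the inductive hypothesis applied to $X_{m,n,\ell}$ with $\ell<r$, and an explicit chart computation of the blow-up identifying the strict transform with a generic determinantal variety of size one less, over an enlarged regular base ring built from the remaining coordinates --- this is exactly the paper's Observation \ref{Obs:n,n,n} (Schur complement via $y_{i,j} = x'_{i,j} - x'_{i,j_1}x'_{i_1,j}$ after the pivot entry becomes a unit) followed by the induction in the proof of Theorem \ref{Thm:ResDetText}. Your stratification by the rank $k$ at points away from the deepest stratum is a harmless addition the paper does not need, since after the first blow-up the inductive hypothesis applies uniformly in every chart; and your worry about coordinate changes over mixed-characteristic bases is unfounded, as the substitution above is a polynomial automorphism over any $R_0$.

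There is, however, one genuine gap, and it sits precisely where you pass over it in a single word: ``every $\ell\times\ell$ minor acquires a factor $t^{\ell}$, \emph{so} the strict transform of $X_{m,n,\ell}$ is cut out by the $\ell\times\ell$ minors of $(\xi_{i,j})$.'' The factorization only shows that the minors of $(\xi_{i,j})$ lie in the strict transform ideal (the saturation); the reverse inclusion --- that they generate it --- is false for arbitrary generating sets, as the paper itself recalls with the example $\langle y^2-x^3,\, y^2-z^5\rangle$ under blowing up the origin. The paper closes this in two steps which together constitute essentially the whole proof: (i) the minors $(f_{I,J})$ form a standard basis in Hironaka's sense (Definition \ref{Def:standardbasis(2)}), and by Hironaka's theorem the strict transforms of a standard basis do generate the strict transform of the ideal under a permissible blowing up (permissibility being supplied by Lemma \ref{Lem:above}); and (ii) an explicit verification, by column operations and cofactor expansion, that the strict transforms of the minors with $i_1\notin I$ or $j_1\notin J$ --- those missing the pivot row or column, which do \emph{not} reduce directly to the Schur complement --- already lie in the ideal $\mathcal{I}$ generated by the minors with $i_1\in I$ and $j_1\in J$. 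Your ``Schur-complement reduction'' silently bundles both (i) and (ii); without them the identification of the strict transform with $\IA^{N}\times X_{m-1,n-1,\ell-1}$, and hence the entire inductive step, is unproven. The divisor bookkeeping you flag as the second main obstacle is, by comparison, routine once this identification is in place.
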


After recalling in sections 1 and 2 the language of idealistic exponents, some notions, invariants and results, we make the reader familiar with the techniques and ideas for reduction by considering several examples in section 3. 
In section 4, we prove Theorems \ref{Thm:EquivIntro} and \ref{Thm:ReductionIntro}, 
and discuss the characterization procedure in detail.
In the last section we show Theorem \ref{Thm:ResDet}.

\smallskip 

\emph{Acknowledgement:} 
The author heartily thanks Bernhard Dietel for many discussions and explanations on his thesis.
He is grateful to Edward Bierstone, Anne Fr\"uhbis-Kr\"uger, and Ulrich G\"ortz for many useful comments on an earlier draft of the paper.
Further, he thanks the University of Mainz and the University of Versailles for their support and excellent working environment during his time there, when major parts of the article were developed.

%
%
%
%
%
%
%
%
%
%
%
%
%
%

\medskip

\section{Idealistic exponents in a nutshell}

We give a brief overview on the language of idealistic exponents that we are going to use.
(For more details we refer to \cite{HiroIdExp, HiroThreeKey, BerndIdExp}).
Nowadays, there are several variants (e.g.~marked ideal \cite{BMfunct, Kollar, Jarek}, basic objects \cite{AnaSantiOrlando05}, presentations \cite{BMheavy}, a variant in the terms of Rees algebras \cite{AngelicaOrlandoElimination, AnaOrlandoSimpli}, or idealistic filtrations \cite{IFP1, IFP2, HirakuKenjiObergurgl}) each different in technical details but all having their origins in Hironaka's notions of pairs and idealistic exponents \cite{HiroIdExp}.

\begin{Rk} 
	Since the theory of idealistic exponents is a bit technical, let us first motivate it by discussing the hypersurface case:
	Consider  $ X = V (f) \subset \Spec (R) = Z $, where $ R $ is a regular local ring and $ 0 \neq f \in R $.
A rough measure for the complexity of the singularity of $ V(f) $ at a point $ x \in \Spec(R) $ is the order of $ f $, 
$$ 
\ord_P(f) := \sup \{ \, d \in \IZ_{ \geq 0 } \mid f \in P^d \, \} \,,
$$ 
where $ P = P_x $ is the prime ideal defining the point $ x \in \Spec ( R ) $.
The worst points \wrt the order are those with order equal to
$$ 
b := \max \{ \ord_{P_x} ( f ) \mid x \in \Spec ( R ) \}
.$$
Hence we memorize the {\em pair} $ \IE := ( f, b ) $ and we denote the set of points where the order of $ f $ is maximal by $ \Sing ( f, b ) $.
Note that $ \Sing(f,b) $ is closed by the upper semi-continuity  of the order
(see \cite{Hiro64} Chapter III \S 3 Corollary 1, p.220).

The goal is to find a sequence of finitely many permissible blowing ups (Definition \ref{Def:perm}) such that, for every point $ x' $ lying on the strict transform $ V(f') $ of $ V(f) $, we have $ \ord_{P_{x'}} ( f' ) < b $. 
In other words, $ \Sing ( f', b ) = \varnothing $ which, by definition, means that the pair $ ( f', b ) $ is resolved.
It is important to mention that the variety $ V(f') $ is not necessarily resolved (see Examples \ref{Ex:easy} and \ref{Ex:Not_full_Reso}). 
But we attained a strict improvement of the singularity since the order can not increase under permissible blowing ups.

We start over, i.e., we consider $ ( f', b') $ with $ b' < b $ the new maximal value of the order of $ f' $.
Assuming that all this is possible we reach the case $ \Sing ( f'', 2 ) = \varnothing $, where $ V(f'') $ denotes the strict transform of $ V(f) $.
Since there is no point of order two or bigger the original singularity is resolved.
This reduces the problem of finding a resolution of singularities for $ X $, to the task of constructing a resolution for $ \IE $.

In order to achieve the desired sequence of permissible blowing ups one has to overcome technical difficulties and refine the previous principle by considering the information on the preceding blowing ups which is encoded in the exceptional divisors. 
Note also that we did not address condition (3) of Conjecture \ref{Conj:ERS}, yet. 
Further, one may have to discuss the question
if locally constructed resolutions patch together to a global one.
Therefore it is preferable to avoid any non-intrinsic choice when determining the center for the blowing ups. 
For an example on this issue and how it is overcome, see Example \ref{Ex:easy_B} or \cite{CS_Dim2} Example 1.14.

From the viewpoint of resolving singularities it is reasonable to identify pairs which undergo the same resolution process.
This provides an equivalence relation on the set of pairs on $ R $, for which an equivalence class is called {\em idealistic exponent}. 
\end{Rk}

Let us come to the precise definitions.
Although idealistic exponents are defined in a more general setting, we restrict ourselves to the local situation.

\begin{Def}
\label{Def:idexp}
Let $ R $ be a regular local ring.
	A \emph{pair} $ \IE = (J,b) $ on $ R $ is a pair consisting of an ideal $ J \subset R $ and a positive integer $ b \in \IZ_+ $.
	We define its \textit{order} at a point $ x \in \Spec(R) $ (corresponding to a prime ideal $ P = P_x \subset R $)
	as 
	$$ 
		\ord_x (\IE) := 
		\left\{ \begin{tabular}{cl}
			$ \frac{\ord_P (J) }{b} $ 	& , if $ \ord_P (J)  \geq b $ and \\[8pt]
			$ 0 $				& , else.  
		\end{tabular} \right.
	$$ 
	We define the \textit{singular locus} (or \textit{(co)support}) of $ \IE $ as 
	$$
		\Sing(\IE) := \{ x \in \Spec(R) \mid \ord_{P_x} ( J ) \geq b \}.
	$$
	If $ \Sing ( \IE ) = \varnothing $ then $ \IE $ is called \textit{resolved}.
	
	Let $ \IE_1 := (J_1, b_1) $ and $ \IE_2 := ( J_2, b_2 ) $ be two pairs  on $ R $.
	Set $ c := \mathrm{lcm}(b_1,b_2) $ and $ c_i = \frac{c}{b_i} $, for $ i \in \{ 1, 2 \} $.
	We define the \textit{intersection of $ \IE_1 $ and $ \IE_2 $} 
	to be the pair
	\begin{equation} 
	\label{eq:intersect_pairs} 
		 \IE_1 \cap  \IE_2 := ( J_1^{c_1} + J_2^{c_2}, c ).
	\end{equation}
	Similarly, we define the intersection of arbitrarily finitely many pairs.
\end{Def}

Since $ R $ is regular it is Noetherian, by definition.
Here, we do neither impose on $ R $ to contain a field, nor to fulfill condition \eqref{condD}.
Again, $ \Sing (\IE) $ is closed since the order is upper-semi continuous.
It is not hard to show that we have $ \Sing ( \IE_1 \cap \IE_2 ) = \Sing ( \IE_1 ) \cap \Sing ( \IE_2 ) $
and $ \ord_x ( \IE_1 \cap \IE_2 ) = \min  \{  \ord_x ( \IE_1 ),  \ord_x ( \IE_2 ) \} $, for  $ x \in \Sing ( \IE_1 \cap \IE_2 ) $.

\begin{Def}
	\label{Def:standardbasis(2)}
	Let $ R $ be a regular local ring with maximal ideal $ M $ and $ J_0 \subset R $ be a non-zero ideal in $ R $.
	\begin{enumerate}
		\item 
	The {\em associated graded ring of $ R $} is defined by 
	$$
		gr_M(R) := \bigoplus_{i \geq 0} M^i/M^{i+1}.
	$$
	For a non-zero element $ g \in R $, its {\em initial form \wrt $ M $} is the element $ in_M(g) \in gr_M(R) $  defined by
	$$
		in_M(g) := g \mod M^{b + 1},
		\hspace{15pt} \mbox{ where \ } b := \ord_M(g).
	$$
	We define the {\em initial ideal of $ J_0 $ \wrt $ M $} as the ideal in $ gr_M(R) $ given by
	$$
	In_M(J_0) := \langle in_M (g) \mid g \in J_0 \rangle
	\subset gr_M(R). 
	$$
	Analogously, $ gr_P (R) $, $ in_P(g) $, and $ In_P (J_0) $ are defined for any non-zero prime ideal $ P \subset R $.
	
	\medskip 
	
	\item 
	Let $ ( f) = ( f_1, \ldots, f_m ) $ be a set of elements in $ J_0 $.
	Set $ b_i := \ord_M ( f_i)  $ and $ F_i := in_M (f_i ) $, $ 1 \leq i \leq m $,
	We say, $ ( f ) $ is a {\em standard basis} for $ J_0 $
	(in the sense of Hironaka) if the following holds:
	\begin{enumerate}
		\item	$ \langle F_1, \ldots, F_m \rangle = In_M(J_0) \subset gr_M (R)$,
		\item	$ b_1 \leq b_2 \leq \ldots \leq b_m $, and
		\item	$ F_i \notin \langle F_1, \ldots, F_{i-1} \rangle $, for all $ i \in \{ 2, \ldots, m \} $.
	\end{enumerate}
	\end{enumerate}
\end{Def}

The notion of standard bases is closely related to that of Macaulay bases, but for the latter conditions (b) and (c) above are not required. 

Note that $ gr_M(R) $ is isomorphic to a polynomial ring over the residue field $ k := R/M $.
More precisely, if $ (w) = ( w_1, \ldots, w_n ) $ is a \RSP for $ R $ and 
if we denote by $ W_i := w_i \mod M^2 $ the images in the graded ring, $ 1 \leq i \leq n $, then
$ gr_M ( R ) \cong k[ W_1 , \ldots, W_ n ] $.

By results of Hironaka,
a standard basis for $ J_0 $ generates $ J_0 $
(\cite{HiroCharPoly}, Corollary (2.21.d))
and the strict transforms of $ ( f ) $ under a reasonable blowing up still generate the strict transform of the ideal $ J_0 $ under the same blowing up
(\cite{Hiro64}, III.2, Lemma 6, p.~216, and see also III.6, Theorem 5, p.~238).
In general, this is not true for any set of generators, e.g.~$ I := \langle y^2 - x^3, y^2 - z^5 \rangle \subset R := k[ x,y,z]_{\langle x,y,z \rangle } $ and the blowing up of the maximal ideal, for $ k $ any field.

\begin{Rk}
\label{Rk:IE_to_x}
Let us explain how to associate a pair to a given singularity $ X \subset Z $, where $ X  $ is a closed subscheme of an excellent regular Noetherian scheme $ Z $
(not necessarily containing a field).
Let $ x_0 \in X $ and let $ U = \Spec ( R ) \subset Z $ be a sufficiently small affine neighborhood of $ x_0 $, 
where $ R = \cO_{Z,x_0} $ is the regular local ring whose maximal ideal $ M $ corresponds to $ x_0 $.
The closed subscheme $ X \cap U \subset U $ is given by an ideal $ J_0 \subset R $.
Let $ ( f ) = ( f_1, \ldots, f_m ) $ be a standard basis for $ J_0 $. 
The pair associated to $ J_0 \subset R $ is defined by
$$
	\IE = ( f_1, b_1 ) \cap ( f_2, b_2 ) \cap \ldots \cap ( f_m, b_m ).
$$
(Recall that $ b_i := \ord_M(f_i) $).
If we can resolve $ \IE$, then the Hilbert-Samuel function must drop locally above $ x_0 $
(\cite{CJS} Theorem 2.10 (2), (6) and Definitions 1.26 and 1.28) 
\end{Rk} 

\begin{Def}
\label{Def:perm}
	Let $ \IE = (J, b) $ be a pair on $ R $.
	\begin{enumerate}
		\item A closed subscheme $ D \subset \Spec(R)  $ is called {\em permissible center for $ \IE $} (or {\em permissible for $ \IE $}) if $ D $ is regular and $ D \subseteq \Sing (\IE) $.	
	If $ D $ is permissible for $ \IE $ then we also say that the blowing up $ \pi: Z' \to \Spec(R) $ with center $ D $ is permissible for $ \IE $.
	
	\medskip
	
	\item
	Let $ \pi $ be a permissible blowing up for $ \IE $ and $ U' = \Spec ( R') \subset Z' $ an affine chart.
	The transform of $ \IE $ in $ U' $ is then given by $ \IE' = (J', b) $, where $ J' $ is defined via $ J \cdot R' = J' H^b $, where $ H $ denotes the ideal of the exceptional divisor in $ U' $.
	
	We call a composition of the form $ U' \hookrightarrow Z' \to \Spec(R) $ a {\em local blowing up}.
	A sequence of local blowing ups is said to be permissible for $ \IE $ if each of the blowing ups is permissible for the corresponding transform of $ \IE $.
	\medskip
	
	\item
	A finite sequence of permissible local blowing ups such that the singular locus of the final transform of $ \IE $ is empty (i.e., the final transform is resolved) is called a {\em (local) resolution of singularities for $ \IE $}. 
\end{enumerate}
\end{Def}

\medskip

Note that in other literature there exist notions of permissible centers which are different from the latter, see for example \cite{CJS} Definition 2.1, where additionally $ X $ has to be normally flat along $ D $.

\smallskip

\begin{Ex}
\label{Ex:easy}
	Let $ R = k[[x,y,z]] $, where $ k $ is any field, and let $ J = \langle x^3 - y^3 z^2 \rangle $.
	Then 
	$$
		\begin{array}{ll}
			\Sing ( J , 2 ) = V ( x, y ) \cup V( x, z )\,, \\
			\Sing ( J , 3 ) = V ( x, y )\,.  		  		
		\end{array}
	$$
	Hence $ V ( x, z ) $ is permissible for $ ( J , 2 ) $ but not for $ ( J , 3 ) $, whereas $ V ( x, y ) $ is permissible for both.
	We blow up with center $ V ( x, y ) $ and consider the point with coordinates $(x',y',z') = (\frac{x}{y}, y, z ) $.
	The	total transform of $ J $ is $ J \cdot R' = \langle  y'^3 (x'^3  -  z'^2) \rangle $ and the ideal of the exceptional divisor is 
	$ H = \langle y' \rangle $.
	Hence the transform of $ ( x^3 - y^3 z^2 , 2 )  $ 
	is $ ( y' (x'^3  -  z'^2) , 2 )  $.
	On the other hand, the transform of $ ( x^3 - y^3 z^2 , 3 )  $ at the same point is $ ( x'^3 - z'^2 , 3 )  $ and the singular locus of this pair is empty.
	
	The example also illustrates that the transform of a pair $ \IE = (J,b) $  is in general neither given by the total transform nor by the strict transform. 
	The transform $ \IE' $ lies in between them and is controlled by the number $ b $.
\end{Ex}
	
From the viewpoint of resolving pairs it is natural to identify those pairs which show the same behavior under permissible blowing ups.
Working with these equivalence classes has the advantage that we can change the representative in order to achieve a situation where good properties are revealed. 

Before giving the precise definition we have to introduce the following:
Let $ \IE = ( J, b ) $ be a pair on $ R $
and
let $ ( t ) = ( t_1, \ldots, t_a ) $ be an arbitrary finite system of independent indeterminates.
Then the lift of $ \IE $ to $ R[t] $ is defined as $ \IE[t] = (J \cdot R[t], b) $.

\begin{Def}
	\label{Def:sim}
	Two pairs $ \IE_1 = ( J_1, b_1) $ and $ \IE_2 = ( J_2, b_2 ) $ on $ R $ are defined to be {\em equivalent}, $ \IE_1 \sim \IE_2 $ if the following holds:
	\begin{center}
	\parbox{11cm}{
			Let $ (t) = ( t_1, \ldots, t_a ) $ be an arbitrary finite system of independent indeterminates.
			Then any sequence of local blowing ups (over $R[t]$) which is permissible for $ \IE_1[t] $ is also permissible for $ \IE_2[t] $ and vice versa. 
		}	
	\end{center}
		An {\em idealistic exponent} $ \IE_\sim $ is the equivalence class of a pair $ \IE $. 
		
\end{Def}

Allowing the extension from $ R $ to $ R[t] $ looks at first sight technical and not necessary but it is useful for proving results on idealistic exponents.
%
In other literature pairs are sometimes also called idealistic exponents (e.g. \cite{HiroThreeKey}) and there is no distinction made between the representative and the corresponding equivalence class.

Observe: 
If $ \IE_3 $ is another pair on $ R $, then 
$ \IE_1 \cap \IE_2 \sim \IE_3 $ means that a sequence of local blowing ups over $ R[t] $ is permissible for $ \IE_3[t] $ if and only if it is permissible for $ \IE_1[t] $ and $ \IE_2 [t] $.

\begin{Thm}[Numerical Exponent Theorem]
	\label{Thm_Num_Exp}
	Let $ \IE_1 $ and $ \IE_2 $ be two equivalent pairs on $ R $.
	For every $ x \in \Spec(R) $, we have
	$$
		\ord_x( \IE_1 ) = \ord_x ( \IE_2 ).
	$$
	In particular, $ \Sing (\IE_1) = \Sing(\IE_2) $.
	Hence the singular locus is an invariant of the idealistic exponent since it does not depend on the choice of the representative.
\end{Thm}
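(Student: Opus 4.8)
The plan is to show that the rational number $\ord_x(\IE)$ is intrinsically encoded in the collection of permissible local blow-up sequences of $\IE$ (and of its lifts $\IE[t]$), since this is exactly the data preserved by $\sim$. Once $\ord_x(\IE_1)=\ord_x(\IE_2)$ is established for every $x$, the equality $\Sing(\IE_1)=\Sing(\IE_2)$ follows at once, because by definition $x\in\Sing(\IE)$ if and only if $\ord_x(\IE)\geq 1$, equivalently $\ord_x(\IE)>0$. So the whole task is to recover $\ord_x(\IE)$ from permissibility data alone.

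First I would reduce to the case where $x$ is a closed point, and this is where the auxiliary variables enter. For a general prime $P\subset R$ with $\dim(R/P)=e>0$, the value $\ord_P(J)$ is the generic value of the order along $V(P)$; I would realize this generic value as the order at an honest closed point of a lift. Concretely, adjoining $t=(t_1,\ldots,t_e)$ and choosing a maximal ideal $\mathfrak n\subset R[t]$ with $\mathfrak n\cap R=P$ corresponding to a sufficiently general closed point over the generic point of $V(P)$, one has $R[t]_{\mathfrak n}$ regular local with $\ord_{\mathfrak n}(J\,R[t])=\ord_P(J)$, whence $\ord_{\mathfrak n}(\IE[t])=\ord_x(\IE)$. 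Since the definition of $\sim$ already quantifies over all such lifts, it suffices to compare the orders of $\IE_1[t]$ and $\IE_2[t]$ at the closed point $\mathfrak n$. Thus I may assume $R$ is regular local with maximal ideal $N$, set $a:=\ord_N(J)$, and I must recover $a/b=\ord_N(\IE)$ (read as $0$ when $a<b$).

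For the integer part I would use a canonical tower of permissible blow-ups. Blow up the closed point $N$; the generic order of the total transform of $J$ along the exceptional divisor $E$ equals $a$, so by the transformation rule $J\cdot R'=J'H^b$ the generic order of $J'$ along $E$ is $a-b$, and by upper semicontinuity this generic value is the minimum of the order over $E$. Hence the regular divisor $E$ is permissible for $\IE'=(J',b)$ precisely when $a-b\geq b$; taking $E$ itself as the next center divides $J'$ once more by $H^b$ and lowers the generic order along $E$ by a further $b$. Iterating, the length-$k$ tower ``blow up $N$, then blow up $E$ a further $k-1$ times'' is permissible for $\IE$ if and only if $a\geq kb$, i.e. if and only if $\ord_N(\IE)\geq k$. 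Because all the centers are canonically prescribed, no choice of a special high-order point can prolong the tower, so the maximal permissible length equals $\lfloor a/b\rfloor$, which is therefore a $\sim$-invariant.

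To pin down the exact rational value I would compare $\ord_N(\IE)$ with an arbitrary threshold $\rho=c/d\in\IQ_{>0}$, and here the passage to $R[t]$ is essential. After adjoining a variable $t$ I would realize the monomial valuation of weight $d$ on the parameters of $R$ and a matching weight on $t$ by a toric sequence of permissible blow-ups of $\IE[t]$, arranged so that each step subtracts a prescribed fractional amount from the running order and is permissible exactly while that running order is still at least $b$. The construction is designed so that the whole sequence $\Sigma(\rho)$ is permissible for $\IE[t]$ if and only if $\ord_N(\IE)\geq\rho$; then $\ord_N(\IE)=\sup\{\rho\in\IQ_{>0}:\Sigma(\rho)\text{ permissible for }\IE[t]\}$, a quantity manifestly invariant under $\sim$. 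Applying this to $\IE_1[t]\sim\IE_2[t]$ yields $\ord_N(\IE_1)=\ord_N(\IE_2)$, hence $\ord_x(\IE_1)=\ord_x(\IE_2)$ for all $x$. The main obstacle is precisely this last step: producing the weighted test sequences and proving the sharp ``if and only if'' for non-integer thresholds, that is, verifying that at every stage the center is regular, lies in the singular locus exactly under the expected numerical inequality, and that the transformation rule removes the intended fractional increment. The integer-part tower and the reduction to a closed point are comparatively routine; the fractional bookkeeping, which is what forces the extension to $R[t]$ into the very definition of equivalence, is where the real work lies.
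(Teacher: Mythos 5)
The paper itself does not prove this theorem: it quotes it and refers to Hironaka's \emph{Three key theorems} (Theorem 5.1) and to \cite{BerndThesis}, Proposition 1.1.10. Your strategy is in fact the same as that of the cited proof: recover $\ord_x(\IE)$ purely from permissibility data of canonical test sequences over $R[t]$, which is exactly what $\sim$ preserves. Within that strategy, two of your three steps are sound. The reduction to closed points via a sufficiently generic maximal ideal $\mathfrak n \subset R[t]$ with $\mathfrak n \cap R = P$ and $\ord_{\mathfrak n}(J\,R[t]) = \ord_P(J)$ is the standard device (and indeed the raison d'\^etre of the indeterminates $t$ in Definition \ref{Def:sim}), although the existence of such an $\mathfrak n$ with both properties deserves a sentence of justification rather than the word ``general''. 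The integer-part tower is correct: the order of the total transform along the exceptional divisor $E$ equals $\ord_N(J)=a$, semicontinuity shows the minimum of the order over $E$ is attained at its generic point, so the tower of length $k$ is permissible precisely when $a \geq kb$; note also that your worry about ``special high-order points prolonging the tower'' is moot, since invariance under $\sim$ only requires that this one fixed tower be permissible for $\IE_1[t]$ if and only if for $\IE_2[t]$.

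The genuine gap is the third step, which is the heart of the theorem. The sharp test sequences $\Sigma(\rho)$ for arbitrary rational thresholds $\rho$ are only postulated (``the construction is designed so that\dots''); no centers are specified and the claimed equivalence is never verified, so what you have actually proved is only $\lfloor \ord_x(\IE_1) \rfloor = \lfloor \ord_x(\IE_2) \rfloor$. Worse, the guiding picture ``each step subtracts a prescribed fractional amount from the running order'' fails at the natural tracked points: blowing up the closed point $(N,t)$ and passing to the origin of the $t$-chart (the intersection of the exceptional divisor with the strict transform of $\{x\}\times\IA^1$) sends the order of the controlled transform from $a$ to $2a-b$, and at later stages the order at such a point depends on finer data than the previous order alone, so there is no well-defined ``running order'' evolving by constant decrements. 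The quantities that do transform linearly are the orders along the exceptional divisors ($a \mapsto a-b$ under the controlled transform), and the cited proof works precisely with these, interleaving point blowups with divisorial steps so that permissibility at each stage encodes a linear inequality of the form $ka \geq \ell b$; choosing $k/\ell$ between the two candidate orders then yields a sequence permissible for one pair but not the other, contradicting equivalence. Until this bookkeeping is carried out and the ``if and only if'' proved, your text is a correct plan for the known proof rather than a proof.
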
 

For the proof, we refer to \cite{HiroThreeKey} Theorem 5.1, or \cite{BerndThesis} Proposition 1.1.10.

\medskip

One of the key techniques in the local study of singularities in characteristic zero is the method of hypersurfaces of maximal contact and, directly related to it, the coefficient ideal.
They allow to reduce the resolution problem locally to one in lower dimension and thus to apply an induction argument.
The idea of coefficient ideals goes back to Hironaka (in the context of idealistic exponents this appears in \cite{HiroKorean} Theorem 1.3, p.~908, and \cite{HiroIdExp} section 8, Theorem 5, p.~111) and was later developed by Villamayor (for basic objects) and Bierstone and Milman (for presentations).

The counterpart of the coefficient ideal in the setting of pairs is the so called coefficient pair \wrt some regular subvariety $ V ( y_1, \ldots, y_r  ) \subset \Spec ( R ) $,
 where $ ( y )$ is part of a system of parameters
 (see also section 3 of \cite{BerndIdExp}).
 In Remark \ref{Rk:Expl_coeff_pairs_max_cont}, we explain more on its role in resolution of singularities.  
 Before we recall the definition, let us mention the following.
 
\begin{Rk} 
Let $ (u) = (u_1, \ldots, u_e) $ be a system of elements in $ R $ extending $ (y) = ( y_1, \ldots, y_r ) $ to a regular system of parameters.
Let $ b \in \IZ_+ $.
Every element $ f \in R $ has an expansion of the form
\begin{equation}
\label{eq:expansion}
f = f (u,y) = \sum_{\substack{B\in \IZ^r_{\geq 0}\\|B|<b}} f_B(u) y^B + h,
\end{equation}
where $ h \in \langle y \rangle^b  $
and the coefficients
$ f_B(u) \in  R $
do not depend on $ ( y ) $:
We define $ f^{(1)}_0 := f|_{y_1=0} \in R $
and put $ g^{(1)}_0 := y_1^{-1} (f -  f^{(1)}_0)  \in R $.
We repeat, $ f^{(1)}_{i} := g^{(1)}_{i-1}|_{y_1=0} \in R $ and $ g^{(1)}_{i} := y_1^{-1}(g^{(1)}_{i-1} - f^{(1)}_i) $, for $ i \in \{ 1, \ldots , b-1 \} $, and we obtain that
$$
	f = \sum_{i_1= 0}^{b-1} f^{(1)}_{i_1} y_1^{i_1} + h_1,
	\ \ 
	\mbox{ for some }
	\
	h_1 \in \langle y_1 \rangle^b.
$$
If $ r = 1 $, we have the desired expansion.
If $ r > 1 $, we apply induction on $ r $ for each $ f^{(1)}_{i_1} $, and get that
$$
 f^{(1)}_{i_1} = \sum_{\substack{(i_2, \ldots, i_r)\\
 		i_1 + \ldots + i_r < b - i_1}}
 f_{i_2,\ldots,i_r}(u)y_2^{i_2}\cdots y_r^{i_r} + h^{(1)}_{i_1},
$$
where $ h^{(1)}_{i_1} \in \langle y \rangle^{b - i_1} $ 
and $ f_{i_2,\ldots,i_r}(u) \in R $ do not depend on $ ( y ) $.
This provides \eqref{eq:expansion}.
\end{Rk}

\begin{Def}
\label{Def:IdCoeffExp}
	Let $ \IE = (J,b) $ be a pair on $ R $.
	Let $ (u, y) = (u_1, \ldots, u_e, y_1, \ldots, y_r ) $ be a \RSP for the regular local ring $ R $. 
	We define the {\em coefficient pair $ \ID (\IE; u; y )  $ of $ \IE $ \wrt $ (y) $} as the pair, which is given by the following construction:
	For $ f \in J $, we consider an expansion	
	$
		f = \sum_{B} f_B (u) y^B + h,
	$
	as in \eqref{eq:expansion}.
	We set 
	$$ 
		\ID(f;u;y)  := \bigcap\limits_{\substack{B \in\IZ^r_{ \geq 0 } \\[3pt] |B| < b}} ( f_B (u) , \, b -|B| ),
	$$
	and
	$$
		\ID (\IE; u; y) := \bigcap_{ f \in J } \ID(f; u; y) \,.  
	$$
	Note that we may consider $ \ID(\IE;u;y) $ as a pair on $ R $ as well as one on $ {R} / \langle y \rangle $.
\end{Def}

\begin{Facts}
\label{Facts:Id_Exponents}
	Let $ R $ be a regular local ring, let $ \IE_1 = ( J, b ) = ( J_1, b_1 )  $ and $ \IE_2  = (J_2, b_2 )$ be pairs on $ R $, and let $ ( u, y ) $ be a \RSP for $ R $.
	\begin{enumerate}
		\item	For every $ a \in \IZ_+ $, we have that $ (J^a, a b) \sim (J,b) $.
				Moreover,  if $ b_2 = b_1 = b $ then we have that $ (J_1,b) \cap (J_2,b) \sim \left(J_1 + J_2,  b \right) $.
			
		\medskip
		
		\item[(2)]	If $ \Sing(J_1,b_1 +1) = \Sing(J_2,b_2+1) = \varnothing $, then $ (J_1,b_1) \cap (J_2, b_2) \sim (J_1 J_2, b_1 + b_2 ) $.

		\medskip
		
		\item[(3)]	 
		Let $ \cD $ be a differential operator of order $ m  \in \IZ_{\geq 0} $, $ m < b $, on $ R $ such that $ \cD(R) \subset R $.
				We have that $ (J, b) \sim  (\cD J, b - m) \cap (J,b) $.
	
		\medskip
		
		\item[(4)]	If $ \IE_1 \sim \IE_2 $, then $ \ID (\IE_1 ; u; y) \sim \ID (\IE_2; u; y) $.

		\medskip
		
		\item[(5)]	
		We have that $ (y,1) \cap \IE  \sim (y,1)\cap \ID (\IE; u; y ) $.
	\end{enumerate}
\end{Facts}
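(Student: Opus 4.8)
The plan is to prove the two pairs equivalent by comparing, stage by stage, the singular loci of their transforms along an arbitrary permissible sequence of local blowing ups over $R[t]$. First I would note that the construction of the coefficient pair is compatible with the lift to $R[t]$: since the coefficients $f_B(u)$ in the expansion \eqref{eq:expansion} do not involve the auxiliary indeterminates, $\ID(\IE[t];u;y) = \ID(\IE;u;y)[t]$, so it suffices to work over a fixed $R[t]$ and, abusing notation, argue as if over $R$. Both pairs carry the factor $(y,1)$, whose singular locus is the regular subvariety $V(\langle y\rangle)$; thus every permissible center is forced to lie inside $V(\langle y\rangle)$, which will play the role of a (possibly higher--codimensional) variety of maximal contact.

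The initial stage is a direct order computation. At a prime $P$ with $\langle y\rangle \subseteq P$ I would pass to $gr_P(R)$, which is a polynomial ring over the domain $R/P$ and hence a domain, and use the expansion $f = \sum_{|B|<b} f_B(u) y^B + h$ with $h \in \langle y\rangle^b$. Comparing initial forms degree by degree, the contribution of the monomial $y^B$ carries the factor $in_P(f_B)$, and these contributions attached to distinct $B$ cannot cancel; together with $in_P(h)$ lying in degrees $\geq b$, this yields $\ord_P(f) \geq b$ if and only if $\ord_P(f_B) \geq b-|B|$ for all $B$ with $|B| < b$. Taking the minimum over $f \in J$ and intersecting with $(y,1)$ gives $\Sing((y,1)\cap(J,b)) = \Sing((y,1)\cap\ID(\IE;u;y))$, which is the equality of singular loci required at the initial stage (cf.\ the Numerical Exponent Theorem \ref{Thm_Num_Exp}).

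To upgrade this equality to genuine equivalence I would induct on the length of the blowing-up sequence. Since a blow-up is permissible for a pair exactly when its regular center lies in the singular locus of the current transform, it is enough to show that at every stage the transforms of the two pairs again have equal singular loci, and this follows from the initial-stage computation as soon as the transforms are again of the two comparable shapes. The heart of the matter is therefore a compatibility lemma: for a blow-up with regular center $D \subseteq V(\langle y\rangle)$, the transform (Definition \ref{Def:perm}) of $(y,1)$ is again of the form $(\tilde y,1)$ defining the still regular strict transform of $V(\langle y\rangle)$, and in each affine chart $f'$ admits an expansion $f' = \sum f'_B(\tilde u)\,\tilde y^B + h'$ with respect to a new regular system of parameters $(\tilde u,\tilde y)$ whose coefficients realize the transform of the coefficient pair, i.e.\ the transform of $\ID(\IE;u;y)$ equals $\ID(\IE';\tilde u;\tilde y)$. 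Granting this, both $(y,1)\cap(J,b)$ and $(y,1)\cap\ID(\IE;u;y)$ transform into pairs of the same form over the chart, and the induction hypothesis closes the argument.

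The main obstacle is exactly this compatibility lemma: tracking the expansion \eqref{eq:expansion} through the blow-up chart by chart, choosing the new parameters $(\tilde u,\tilde y)$, and checking that the controlled-transform bookkeeping matches up, namely that dividing $f$ by $H^b$ is consistent with dividing each $f_B$ by $H^{b-|B|}$ once $y^B$ acquires the factor $H^{|B|}$ in the relevant chart. Two points need care: the weight accounting must be uniform across all $B$, and the coefficient pair a priori depends on the chosen complement $(u)$ of $(y)$ --- here Facts \ref{Facts:Id_Exponents}(4), that $\ID$ respects equivalence, removes the ambiguity. Finally, whenever the argument would prefer to extract the $f_B$ by applying Hasse-Schmidt derivatives $\partial/\partial y^B$ (for instance to invoke Facts \ref{Facts:Id_Exponents}(3)) and these fail to preserve $R$ because condition \eqref{condD} is not assumed, I would realize them over the extension $R[t]$ through the honest polynomial derivations in the auxiliary variables, which is precisely what the passage to $R[t]$ in the definition of equivalence is designed to permit.
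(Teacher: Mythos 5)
Your proposal only ever proves item (5). The statement comprises five assertions, and they are not on an equal footing: (1) and (2) are elementary consequences of the definitions, but (3) is the Diff Theorem --- the deepest of the five, whose characteristic-free proof is genuinely nontrivial --- and (4) is the invariance of the coefficient pair under equivalence. You prove none of these; worse, you invoke (3) and (4) inside your argument for (5) (``to invoke Facts \ref{Facts:Id_Exponents}(3)'', ``here Facts \ref{Facts:Id_Exponents}(4) \ldots removes the ambiguity''), so read as a proof of the Facts as a whole the argument is circular. For comparison, the paper does not reprove any of the items: it cites \cite{BerndThesis} Lemma 1.1.8 for (1)--(2), \cite{HiroThreeKey} Theorem 3.4 or \cite{BerndIdExp} Proposition 1.9 for (3), and \cite{BerndIdExp} Theorem 3.2 and Corollary 3.3 for (4)--(5), recording only the proof idea: assume the equivalence fails and derive a contradiction from a sequence of blowing ups permissible for one pair but not the other. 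Your stage-by-stage comparison of the singular loci of the transforms is exactly that idea, so your skeleton for (5) is the standard one; it just cannot stand in for the other four items.

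Even within (5) there are two concrete defects. First, your appeal to (4) to dispose of the dependence of $\ID(\IE;u;y)$ on the chosen complement $(u)$ misreads that item: (4) fixes the system $(u;y)$ and varies the representative $\IE_1 \sim \IE_2$; it says nothing about comparing $\ID(\IE;u;y)$ with $\ID(\IE';\tilde u;\tilde y)$ for new parameters produced chart by chart after a blow-up, which is precisely what your compatibility lemma needs. (The weight bookkeeping itself is fixable: the required inequality $\ord_D(f_B) \geq b - |B|$ along a center $D$ follows from $D \subseteq \Sing\bigl((y,1)\cap\ID(\IE;u;y)\bigr)$ together with your initial-stage equality of singular loci; but the admitted ``heart of the matter'' is still only sketched.) Second, the closing claim that the Hasse-Schmidt derivatives $\dell{y^B}$ can be ``realized over $R[t]$ through the honest polynomial derivations in the auxiliary variables'' fails as stated: $\dell{t^N}$ annihilates the generators of $J \cdot R[t]$ coming from $R$, so it extracts nothing. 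The trick you are reaching for --- Taylor-expanding $f(u, y+t)$ so that derivatives in $t$ compute the coefficients $f_B$ --- requires the substitution $y \mapsto y + t$, which is not available over an arbitrary regular local ring; this is exactly why the paper isolates condition \eqref{condD} as a hypothesis for Theorem \ref{Thm:Reduction}, rather than treating differentiation on $R$ as something the passage to $R[t]$ provides for free.
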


\begin{proof}[``Proof"] 
	We just give the references for the proofs.
	Part (1) and (2) follow easily from the definitions.
	The arguments for the proof can also be found in \cite{BerndThesis}, Lemma 1.1.8.
	The third part is the so called Diff Theorem, see \cite{HiroThreeKey}, Theorem 3.4, or \cite{BerndIdExp}, Proposition 1.9.
	The remaining part, (4) resp.~(5), is proven in \cite{BerndIdExp}, Theorem 3.2 resp.~Corollary 3.3.
	The idea is to assume that the equivalence does not hold and then deduce a contradiction from the existence of a sequence of blowing ups that is permissible for one, but not the other. 
\end{proof}

By (1) we can extend the notions of pairs and idealistic exponents to $ b \in \IQ_+ $;
$ (J, b) $ is defined to be the pair which is equivalent to $ ( J^a, a b ) $, where $ a \in \IZ_+ $ is such that $ a \cdot b \in \IZ_+ $.
%
Part (3) and (5) are extremely useful for simplifying the representative of an idealistic exponent.
And (4) is telling us that the coefficient pair $ \ID (\IE ; u; y) $ is an invariant of the idealistic exponent $ \IE_\sim $ and $ (u,y) $. 
Thus it is a reasonable object to consider in this theory. 

In particular, (4) and the Numerical Exponent Theorem (Theorem \ref{Thm_Num_Exp}) imply that 
$$
\delta_x (\IE , u, y) := \ord_x ( \ID (\IE ; u; y ) ) ,
$$
for $ x \in \Spec ( R ) $, is an invariant of the idealistic exponent $ \IE_\sim $.
		In \cite{BerndBM}, the author showed that it is possible to relate this number with certain polyhedra attached to idealistic exponents and that $ \delta_x (\IE , u, y) $ is one of the key ingredients of the invariant of Bierstone and Milman for constructive resolution of singularities in characteristic zero, \cite{BMheavy, BMlight}.

\medskip

\begin{Def}
\label{Def:max_contact}
	Let $ R $ be a regular local ring and let $ \IE $ be a pair on $  R $.
	Let $ (y) = (y_1, \ldots, y_r) $ be a system of elements in $ R $ which can be extended to a {\RSP} for $ R $.
	We say $ W := V(y) $ has {\em maximal contact with $ \IE $} if the following equivalence holds
	$$ 
		\IE \sim (y, 1) \cap  \IE \, .
	$$
\end{Def}

Facts \ref{Facts:Id_Exponents} (1) and (3) provide that a system $ ( y) $ having maximal contact can be determined by using differential operators.
For concrete examples see the section 3.

\begin{Rk}
\label{Rk:Expl_coeff_pairs_max_cont}
	Let $ \IE $, $ R $ and $ (y) $ be as in the previous definition. 
	Suppose \eqref{condD} holds for $ R $ and $ V( y) $ has maximal contact with $ \IE $.
	The equivalence implies that $ \Sing ( \IE ) \subset  V ( y ) $, i.e., any center which is permissible for $ \IE $ is of the form $ V ( y, g_1, \ldots, g_d) $ for some $ g_1, \ldots g_d $ coming from $ R/\langle y \rangle $.
	Facts \ref{Facts:Id_Exponents}(4) provides that $ \IE \sim (y,1)\cap \ID (\IE; u; y ) $.
	It is not hard to see that this condition is stable under permissible blowing ups as long as the transform of $ \IE $ is not resolved.
	Furthermore, if we consider one of the $ Y_j $-charts of the blowing up, the transform of $ \IE $ is resolved, since so is $ ( y, 1 ) $.
	
	Unfortunately, maximal contact does not always exist in general. 	
	This problem is discussed in the next sections.
\end{Rk}

\begin{Ex}
\label{Ex:Explain_basics}
	Let $ R = k[[x,y,z]] $, where $ k $ is a field of characteristic zero or $ p > 3 $.
	Consider the hypersurface given by the polynomial
	$$ 
		f = x^2 + y^3 + 3 y^2 z + 3 y z^2 + z^3 + z^{5} \,.
	$$
	We associated the following pair on $ R $ to this:
	$$ 
		\IE:= ( f, 2 ) \sim ( x, 1 ) \cap ( y^3 + 3 y^2 z + 3 y z^2 + z^3 + z^{5} , 2 ) 
	$$
	The equivalence follows from Facts \ref{Facts:Id_Exponents}(3) by taking the derivative by $ x $ and using that $ ( 2x, 1 ) = ( x,1 ) $ since $ 2 $ is invertible in $ k $.
	
	The coefficient pair is	$ \ID_1 :=  \ID (\IE ; y,z; x ) = ( y^3 +  3 y^2 z + 3 y z^2 + z^3 + z^{5} , 2 ) $ and $ \delta_1 := \delta (\IE , u, y ) = \frac{3}{2} $. 
	By the above equivalence, we can deduce a resolution of $ \IE $ from a resolution of $ \ID_1 $. 
	For the latter, we first want to eliminate the singularities of highest order, i.e., in this case those of order $ 3 $.
	Hence we consider
	$$ 
		\ID_1^* := ( y^3 +  3 y^2 z + 3 y z^2 + z^3 + z^{5} , 3 ) 
	$$
	(Note that $ 3 = \delta_1 \cdot 2 $).
	After resolving
	$ 
		\ID_1^* ,
	$ 
	we have to study the transform $ \ID'_1 $ of $ \ID_1 $ under the preceding blowing ups.
	This is slightly more involved and we refer the reader to the next section for more details.

	Using Facts \ref{Facts:Id_Exponents}(3), we get
	$ \ID_1^* \sim ( 6 y + 6 z , 1  ) \cap \ID_1^* $ after taking two times the derivative by $ y $.
	We set	
	$ w := y + z $ and replace $ y $ by $ w - z $.
	(Note that $ 6  $ is invertible in $ k $).
	We have
	$ ( y + z)^3 + z^{5} = w^3 + z^{5} $
	and
	$
	 \ID_1^* \sim ( w ,1 ) \cap ( z^{5}, 3)
	$.
	Therefore the next coefficient pair is
	$ \ID_2 := ( z^{5}, 3) $
	and
	$ \delta_2 := \frac{5}{3} $.
	We modify the assigned number as before and get	
	$ \ID_2^* = ( z^{5}, 5 ) \sim ( z, 1 ) $.
	
	Since $ f = x^2 + w^3 + z^5 $, the origin $ V ( x,w,z) = V ( x,y,z) $ is the only possible permissible center for the first blow-up.
	We leave it as an exercise to the reader to compute the transforms of the considered pairs under the blowing up.
\end{Ex}

\begin{Rk}
\label{Rk:Blowup_on_V(y)_back_to}
	Suppose we are in the situation of maximal contact, $ \IE \sim ( y, 1) \cap \ID (\IE; u; y ) $ and assume by induction  $ \ID (\IE; u; y ) $ yields the center $ D_Y := V ( \overline g_1, \ldots, \overline g_d ) $, for certain $ \overline g_i \in R/ \langle y \rangle $. 
	Then the center for $ \IE $ is given by $ D := V ( y, g_1, \ldots, g_d ) $, where $ g_i \in R $ denotes any lift of $ \overline g_i $.
	In Example \ref{Ex:More_detail_Example} we explain more in detail how this improves the original singularity. 
\end{Rk}

\smallskip

For our characterization result we need some technical notions.
Namely, the idealistic variant of the tangent cone and certain objects associated with it (the so called directrix and the ridge, which were introduced and studied by Hironaka and Giraud. 
These are invariants of the idealistic exponent that provide some refined information on it.
For a detailed discussion we refer to section 2 of \cite{BerndIdExp}.

\begin{Def}
\label{Def:Tangent}
	Let $ \IE = (J, b) $ be a pair on $ R $, $ M $ be the maximal ideal of $ R $, and $ k = R/M $ be its residue field.
	Suppose $ \Sing (\IE) \neq \varnothing $.
	\begin{enumerate}
		\item	For $ f \in J $, the {\em $ b $-initial form of $ f $ (at $ M $)} is defined by 	
				$$
					in (f,b) := \left\{ \hspace{5pt} \begin{tabular}{cc}
									$ f \mod M^{ b + 1 } $, 	&	 if $ b \in \IZ_+ $, \\[5pt]
									$ 0 $,					&	 if $ b \notin \IZ_+ $.
									\end{tabular}
							\right.		
				$$
		
		\medskip
		
		\item	We define the {\em tangent cone 
			$ C_M(J,b) \subset \Spec (gr_M(R)) $ of $ \IE $ at $ M $} as the cone defined by the homogeneous ideal $ In_M (J,b) \subset  gr_M (R) $, where
				$$
				In_M (J,b) :=  \left\{ \begin{tabular}{cc}
									$ \langle  J \mod M^{ b + 1 } \rangle = \langle in (f, b) \mid f \in J \rangle $, 	&	 if $ b \in \IZ_+ $, \\[5pt] 
									$ \langle 0 \rangle $,					&	 if $ b \notin \IZ_+ $.
									\end{tabular}
							\right.		
			$$
	\end{enumerate} 
	
\end{Def}

\begin{Rk}
		The condition $ \Sing( \IE) \neq \varnothing $ is equivalent to $ x_0 \in \Sing ( \IE) $, 
		where $ x_0 $ denotes the closed point corresponding to the maximal ideal $ M $,
		or, in other words, it is equivalent to $ \ord_M(J) \geq b $.
		Therefore $ \Sing (\IE) \neq \varnothing $ implies $ f \in M^b $, for every $ f \in J $.
		Hence, $ in(f,b) $ is either homogeneous of degree $ b $ (if $ \ord_M(f) = b $), 
		or zero (if $ \ord_M(f) > b $).
		
		The latter also justifies to put $ in(f,b) := 0 $ if $ b \in \IQ_+ \setminus \IZ_+ $.
		Since $ \ord_M (J) \in \IZ_\gqz $ and $ \ord_M(J) \geq b $, we have $ \ord_M(J) > b $ in this case and thus $ \ord_M(f) > b $ for every $ f \in J $.
\end{Rk}

\begin{Ex}
	\label{Ex:continues}
	Let $ R $ be a regular local ring with maximal ideal $ M $ and residue field $ k = R/M $, $ \car{k} = p > 0 $. 
	Let $ (x,y,z, t,u,v) $ be a regular system of parameters for $ R $.
	We have that $ gr_M(R) \cong k [X,Y,Z,T,U,V] $.
	\begin{enumerate}
		\item 
	Consider the pair $ \IE = (J,b) := ( \langle f_1, f_2, f_3 \rangle, p^2 + 1 ) $, for 
	$$ 
	f_1 := x y^{p^2}  - x t^3 u^{p^2},
	 \ \ \ \
	f_2 := z^{p^2 - p + 1} (t + u)^{p} - v^{p^3},
	 \ \ \ \
	 f_3 := t^{p^2 + 2} - u^{p^2 + 1} v.
	 $$
	 By the above definition:
	 $
	 in(f_1, p^2 + 1) 
	 = 	X Y^{p^2}, 
	  \ \
	 in(f_2, p^2 + 1) 
	 = 	Z^{p^2 - p + 1} ( T + U)^{p},
	 $ 
	 and 
	 $
	 in(f_3, p^2 + 1) 
	 = 	0.
	 $
	 Therefore, $ In_M( J,b ) = \langle XY^{p^2}, \, Z^{p^2 - p + 1} ( T + U)^{p} \rangle \subset gr_M(R)$. 
	 
	 \medskip  
	 
	 \item 
	 Suppose that $ k $ is not perfect and let $ \lambda \in k \setminus k^p $.
	 Let $ \epsilon \in R^\times $ be a unit in $ R $ such that $ \epsilon \equiv \lambda \mod M $.
	 Let $ \IE' = (J', b') := ( \langle f'_1, f'_2 \rangle , p^2 ) $ be the pair defined by
	 $$
		 f'_1 := (x^p  + \epsilon y^p )z^{p^2 - p} + tuv^{p^2},
		 \ \ \ \ 
		 f'_2 := z^{p^2} + u^{p^2} + \epsilon ( x^p  + \epsilon y^p  )^p + v^{p^2 + 1}.
	 $$
	 We get: $ in(f'_1,p^2 ) = (X^p  + \lambda Y^p )Z^{p^2 - p } $ and
	 $ in(f'_2,p^2 ) = Z^{p^2} + U^{p^2} + \lambda ( X^{p}  + \lambda Y^p )^p $,
	 and $ In_M( J',b' ) $ is the ideal in $ gr_M(R)$ generated by these two. 
 	\end{enumerate}
\end{Ex}

\begin{Def}
	\label{Def:Dir_Rid}
	Let $ \IE = (J, b) $ be a pair on $ R $, $ M $ be the maximal ideal of $ R $, and $ k = R/M $ be its residue field.
	Suppose $ \Sing (\IE) \neq \varnothing $.
	\begin{enumerate}		
		\item	The {\em directrix $ \Dir (\IE ) = \Dir_{M} (\IE )$ of $ \IE $} is defined to be 
		the largest sub-vector space $ V $ of $ \Spec(gr_M(R)) $ leaving the cone $ C_M(J,b) $ stable under translation, $ C_M(J,b) + V  = C_M(J,b) $.
		In other words, $ \Dir(\IE) $ corresponds to a smallest list of variables 
		$ Y_1,\ldots Y_r \in  gr_M ( R )_1 $ (homogeneous of degree one) such that
		\begin{equation}
			\label{eq:directrix}
				(\, In_M (J,b) \,\cap\, k[Y_1,\ldots,Y_r ] \,)\cdot gr_M ( R ) = In_M (J,b) .
		\end{equation}
			
		\medskip

		\item	The {\em ridge  (fa\^ite in French) $  \Rid (\IE ) = \Rid_{M} (\IE )$ of $ \IE $} is defined to be 
		the largest additive subgroup of $ \Spec(gr_M(R)) $ leaving the cone $ C_M(J,b) $ stable under translation.
		In other words, $ \Rid(\IE) $ corresponds to a smallest list of additive homogeneous polynomials $  \sigma_1, \ldots, \sigma_s  \in  gr_M ( R ) $ such that
			\begin{equation}
			\label{eq:ridge}
				(\, In_M (J,b) \,\cap\, k[ \sigma_1, \ldots, \sigma_s ] \,)\cdot gr_M ( R ) = In_M (J,b) .
			\end{equation}
	\end{enumerate} 
	
\end{Def}

Recall that a polynomial $ \sigma ( W ) \in k[W_1, \ldots, W_n]  $ is called additive if $ \sigma ( a + b ) = \sigma ( a ) + \sigma ( b ) $, for any $ a, b \in k^n $.
If $ \car{ k } = 0 $ then the additive polynomials are those homogeneous of degree one, i.e., the definition of the directrix and the ridge coincide. 
If $ \car{ k } = p > 0 $ then the additive homogeneous polynomials are of the form $ \sigma = \sum_{i = 1 }^n \lambda_i W_i^q $, $ \lambda_i \in k $ and $ q = p^d $, $ d \in \IZ_{ \geq 0 } $.

\begin{Ex}
	\label{Ex:continues2}
	Let us continue Example \ref{Ex:continues}.
	\begin{enumerate}
		\item 
	We have 
	$
		In_M( J,b ) = \langle XY^{p^2}, \, Z^{p^2 - p + 1} ( T + U)^{p} \rangle \subset gr_M(R).
	$
	At the first look, one might think that the variables 
	$ (Y_1, \ldots, Y_r) $ corresponding to the directrix $ \Dir(\IE) $ are $ (X, Y, Z, T , U) $.
	But we may replace $ t $ by the new parameter $ \widetilde t  := t + u $.
	Then $ Z^{p^2 - p + 1} ( T + U)^{p} 
	= Z^{p^2 - p + 1}  \widetilde T^{p} $ and we see that
	$$
	(Y_1, \ldots, Y_4) = (X,Y,Z,\widetilde T).
	$$
	Moreover, a possible choice for the additive polynomial defining the ridge $ \Rid(\IE) $ is
	$$
		\sigma_1 := X,
		\ \
		\sigma_2 := Z,
		\ \
		\sigma_3 := \widetilde{T}^p,
		\ \
		\sigma_4 := Y^{p^2},
	$$
	and $ XY^{p^2} = \sigma_1 \sigma_4 $ and 
	$ Z^{p^2 - p + 1}  \widetilde T^{p}
	= \sigma_2^{p^2 - p + 1}  \sigma_3 $.
	Note that $ p^2 - p + 1 $ is prime to $ p $.
	
	\medskip 
	
	\item 
	The ideal of the tangent cone $ C_M( J',b' ) $ is generated by
	$$ 
		F_1' := (X^p  + \lambda Y^p )Z^{p^2 - p } 
	\ \ \mbox{ and } \ \
		F_2' := Z^{p^2} + U^{p^2} + \lambda ( X^{p}  + \lambda Y^p )^p .
	$$
	Since $ \lambda $ is not a $ p $-power in $ k $, we can not perform a change in the parameters to reduce the number of variables appearing in the generators as before.
	Therefore, the directrix $ \Dir (\IE') $ is given by
	$$
		(Y_1', Y_2', Y_3', Y_4')
		= (X, Y, Z, U).
	$$
	(We use $ Y_i' $ in order to avoid confusion with the first example).
	Furthermore, a possible choice for the additive polynomials giving $ \Rid (\IE') $ is
	$$
		\sigma_1' = X^p + \lambda Y^p,
		\ \
		\sigma_2' = Z^p,
		\ \
		\sigma_3' = F_2'.
	$$
	(Again, we use $ \sigma_j' $ to distinguish this from the previous example).
	We have that $ F'_1 = \sigma_1' (\sigma_2')^{p-1} $ and $ F'_2 = \sigma_3' $.
\end{enumerate}
\end{Ex}   

\begin{Def}
	\label{Def:(3)_defines directrix_ridge}
	Let the situation be as in the previous definition.
\begin{enumerate}
\item
We say that additive polynomials $ \sg_1, \sg_2, \ldots, \sg_s  \in K [ Y_1, \ldots, Y_r ] $ are in {\em triangular form} if
\begin{equation}
\label{eq:triangular}
	\sg_i = Y_i^{ q_i } + \sum_{ j = i + 1 }^{ r }  \lm_{ij} \, Y_{ j }^{ q_i },
\end{equation}
where $ q_i = p^{ d_i } $, for certain $ d_i \in \IZ_{\geq 0 } $, and $ q_i \leq q_{ i +1 } $, and $ \lm_{ij} \in k $, $ 1 \leq i \leq s $.

Note that, for every set of additive polynomials, there exists a set of additive polynomials in triangular form which generates the same ideal.
One could even achieve that $ \lm_{ij} = 0 $ for every $ j $ with $ q_i = q_j $.
But for our purpose it is only important that $ ( \sg ) = ( \sg_1, \ldots, \sg_s ) $ are in triangular form.

\medskip

\item
If we can choose the system $ ( Y ) = ( Y_1, \ldots, Y_r ) $ in such a way that $ \sigma_j = Y_j^{q_j} $, for all $ j $, then we say that {\em the reduced ridge coincides with the directrix}.
(Note that then $ s = r $).
For example, this is the case if $ k $ is perfect.
In general, this is only true after a finite pure-inseparable base field extension $ K / k $;
for more on this see \cite{BerndIdExp}, Remark 2.6.
(For more details on the ridge (and in particular an intrinsic definition) see \cite{GiraudEtude} and \cite{BHM}; 
further in \cite{CPS} its role as a refined invariant for a singularity is discussed).

\medskip

\item
We say a system $ ( y )  = ( y_1, \ldots, y_r ) $ (resp. $ (g) = ( g_1, \ldots, g_s ) $) in $ R $ {\em determines (or defines) the directrix (resp.~the ridge)} if their initial forms $ ( Y ) $, $ Y_j = y_j \mod M^2 $, $ 1 \leq j \leq r $, (resp. $ (\sigma) $, $ \sigma_i = g_i \mod M^{q_i + 1} $, $ 1 \leq i \leq s $) fulfill \eqref{eq:directrix} (resp.~\eqref{eq:ridge}).
Here we always implicitly assume that $ r $ (resp.~$ s $) is minimal with this property.
In the same way, we say $ ( Y ) $ (resp.$ (\sigma ) $) defines $ \Dir_M(\IE) $ (resp.~$ \Rid_M(\IE) $).
\end{enumerate}
\end{Def}

\begin{Ex}
	\label{Ex:continues3}
	Let us continue the previous examples.
	\begin{enumerate}
		\item 
		For $ \IE $, the additive polynomials $ (\sigma_1, \ldots, \sigma_4 ) $ 
		are in triangular form and the reduced ridge coincides with the directrix. 
		Further, $ g_1 := x, g_2 := z ,  g_3 := \widetilde t^p $, and $ g_4:=y^{p^2} - t^3 u^{p^2}$ is a possible choice for elements in $ R $ that determine the ridge of $ \IE $.
		
		\medskip
		
		\item 
		For $ \IE' $,
		the additive polynomials $ (\sigma_1', \sigma_2', \sigma_3' ) $ are not in triangular form. 
		In order to achieve the latter, we need to replace $ \sigma_3' = Z^{p^2} + U^{p^2} + \lambda (X^p + \lambda Y^p )^p $ by 
		$ \sigma_3^* := U^{p^2} $.
		Note that $ (\sigma_1', \sigma_2', \sigma_3^* ) $ generates the same ideal as $ (\sigma_1', \sigma_2', \sigma_3' ) $.
		Since $ \lambda \notin k^p $ is not a $ p $-power, the polynomial
		$ \sigma_1' = X^p + \lambda Y^p $ is reduced and the reduced ridge does not coincide with the directrix.
		Finally, $ g'_1 := x^p + \epsilon y^p ,
		g'_2 := z^p $, and $ g'_3 := u^{p^2} + v^{p^2 - 1} $
		are elements in $ R $ defining $ \Rid(\IE') $.
	\end{enumerate}
\end{Ex}

\medskip

By passing to the localization $ R_P $ all the previous notions are defined for any point $ x \in \Sing( \IE ) $, $ x $ corresponding to $ P \subset R $  a non-zero prime.
Further, all definitions and result can be applied for $ R $ being a polynomial ring over a field (or any regular local ring) instead of a regular local ring.

\medskip

\begin{Def}
	\label{Def:idealistic_tangent_cone_etc}
	Let $ \IE = (J,b) $ be a pair on $ R $ and suppose $ \Sing ( \IE ) \neq \varnothing $.
	Let $ ( Y ) =  ( Y_1, \ldots, Y_r ) $ (resp. $ ( \sigma ) = ( \sigma_1, \ldots, \sigma_s ) $) be elements defining the directrix (resp.~the ridge) of $ \IE $.
	In particular, $ Y_j $ are homogeneous of degree one, $ 1 \leq j \leq r $, and $ \sigma_i $ are additive homogeneous polynomials of order $ p^{d_i} $, $ d_i \geq 0 $, $ 1 \leq i \leq s $. 	
	Then we define the following pairs on $ gr_M (R) \cong k[ Y, U ] $ (where $ ( U ) = ( U_1, \ldots, U_e ) $ are homogeneous elements of degree one extending $ ( Y ) $):
	\begin{center}
		\begin{tabular}{ll}
			$ \ITC ( \IE ) = (\, In_M ( \IE ),\, b \,) $ & \em idealistic tangent cone of $ \IE $ (at $ M $), \\[5pt]
			$ \IDir ( \IE ) = (\, \langle Y_1, \ldots, Y_r \rangle,\, 1 \,) $  & \em idealistic directrix of $ \IE $ (at $ M $), \\[5pt]
			$ \IRid ( \IE ) = \bigcap\limits_{i = 1 }^s (\, \sigma_i ,\, p^{d_i} \,) $  & \em idealistic ridge of $ \IE $ (at $ M $) .
		\end{tabular} 
	\end{center}	
\end{Def}

\bigskip

Let $ \IE_1 = (J_1,b_1) $, $ \IE_2 = (J_2,b_2) $ be two pairs on $ R $ and assume $ \Sing ( \IE_1 \cap \IE_2 ) \neq \varnothing $. 
Then we have 
$
		In_M (\IE_1 \cap \IE_2) = In_M(\IE_1) + In_M(\IE_2) 
$.
From this we obtain the equalities
$ \ITC ( \IE_1 \cap \IE_2 ) =  \ITC ( \IE_1 ) \cap  \ITC (\IE_2 ) $,
$ \IDir ( \IE_1 \cap \IE_2 ) = \IDir ( \IE_1 ) \cap  \IDir ( \IE_2 ) $, and 
$ \IRid ( \IE_1 \cap \IE_2 ) =  \IRid ( \IE_1 ) \cap  \IRid ( \IE_2 ) $.

\medskip
	
\begin{Prop}[\cite{BerndIdExp}, Proposition 2.14, Corollary 2.12]
\label{Prop:ItcDirRidUnique}
	Let $ \IE = \IE_1 = (J_1, b_1) $ and $ \IE_2 = (J_2, b_2) $ be two equivalent pairs on $ R $, $ \IE_1 \sim \IE_2 $, and suppose $ \Sing (\IE_1) = \Sing (\IE_2) \neq \varnothing $.
	We have 
	\begin{equation}
	\label{eq:ideal_tan_dir_rid_equiv}
		\ITC (\IE_1) \sim  \ITC (\IE_2) \,,
		\hspace{10pt}
		\IDir (\IE_1) = \IDir (\IE_2) \,, 
		\hspace{10pt}
		\mbox{and}
		\hspace{10pt}
		\IRid (\IE_1) \sim \IRid (\IE_2)\,.
	\end{equation}
	Moreover, if $ \car{ k }  = 0 $ or $ b < \car{ k }  $, $ k = R/M $, then 
	\begin{equation}
	\label{eq:for_equiv}
		\IDir ( \IE ) \sim \IRid( \IE ) \sim \ITC(\IE) \, .
	\end{equation}
\end{Prop}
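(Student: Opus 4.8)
The plan is to prove the three invariance statements of the first part by first reducing the directrix and ridge assertions to the tangent cone assertion, and then to establish the second part directly for a single pair. Two inputs are used repeatedly: the Numerical Exponent Theorem (Theorem \ref{Thm_Num_Exp}), which already gives $\Sing(\IE_1)=\Sing(\IE_2)$ and $\ord_x(\IE_1)=\ord_x(\IE_2)$ for all $x$, and the Diff Theorem (Facts \ref{Facts:Id_Exponents}(3)), which converts the action of a differential operator of order $m<b$ into an $\sim$-compatible operation on pairs.

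The core step is the tangent cone invariance $\ITC(\IE_1)\sim\ITC(\IE_2)$, for which I would argue by deformation to the tangent cone. Since the closed point $x_0$ attached to $M$ lies in the common singular locus, blowing up $x_0$ is permissible for both $\IE_1$ and $\IE_2$, and along the exceptional divisor the order of the transform $\IE_i'$ is governed by the order of $\ITC(\IE_i)$ on $\Spec(gr_M(R))$; equivalently, one spreads each $\IE_i$ out over $\mathbb{A}^1_t$ into a family whose general fibre is $\IE_i$ and whose special fibre at $t=0$ is $\ITC(\IE_i)$. Because $\sim$ is stable under adjoining the auxiliary variable $t$ (Definition \ref{Def:sim}), a blowing-up sequence on $gr_M(R)[t]$ distinguishing $\ITC(\IE_1)$ from $\ITC(\IE_2)$ could be spread out over the total space and restricted to general fibres, contradicting $\IE_1\sim\IE_2$. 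I expect this transfer to be the main obstacle: one must realize an arbitrary permissible sequence on the cone through permissible sequences on $R$, checking that permissibility passes correctly between the special and the general fibre. This is precisely the point where ``same singular locus'' has to be upgraded to genuine equivalence, and it is the classical hard content of the invariance of the tangent cone.

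Granting this, the directrix and ridge statements reduce to the assertion that $\sim$-equivalent homogeneous pairs determine the same directrix subspace and the same ridge subgroup scheme. Both are read off intrinsically from the initial ideal $In_M(\IE)$ through the minimality conditions \eqref{eq:directrix} and \eqref{eq:ridge}, and I would identify them with the linear, respectively additive, part of the differentially generated structure of $In_M(\IE)$ in the sense of Giraud. The Diff Theorem shows that lowering degrees by Hasse-Schmidt derivatives is an $\sim$-compatible operation, so this structure, and hence its linear and additive parts, depends only on the equivalence class of $\ITC(\IE)$. Here the distinction equal-versus-equivalent is exactly the distinction between a canonical and a non-canonical presentation: the directrix is presented by degree-one forms and the pair $\IDir(\IE)=(\langle Y_1,\dots,Y_r\rangle,1)$ is canonical, giving the equality $\IDir(\IE_1)=\IDir(\IE_2)$, whereas the additive generators $\sigma_i$ of the ridge are only determined up to the choices in the triangular form \eqref{eq:triangular}, so that $\IRid(\IE_1)\sim\IRid(\IE_2)$ is the best one can assert.

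For the second part, assume $\car{R/M}=0$ or $b<\car{R/M}$. Each ridge polynomial $\sigma_i$ is additive homogeneous of degree $q_i=p^{d_i}$ with $q_i\le b$ (it arises as $\cD_iF_i$ with $\cD_i$ of order $b-q_i\ge 0$ in \eqref{eq:diff_sigma}); in characteristic zero additive already means linear, while $b<p$ forces $q_i=p^{d_i}\le b<p$, hence $d_i=0$ and $q_i=1$. In either case every $\sigma_i$ is linear, so $\IRid(\IE)=\IDir(\IE)$ directly from \eqref{eq:triangular}. It remains to show $\ITC(\IE)\sim\IDir(\IE)$. For one inclusion, in this characteristic range the Hasse-Schmidt derivatives of order $b-1$ of the degree-$b$ generators of $In_M(\IE)$ are linear forms spanning exactly $\langle Y_1,\dots,Y_r\rangle$, so iterating Facts \ref{Facts:Id_Exponents}(3) yields $\ITC(\IE)\sim\ITC(\IE)\cap\IDir(\IE)$. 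For the reverse, since $In_M(\IE)$ is generated by degree-$b$ forms lying in $k[Y_1,\dots,Y_r]$, each such generator has order exactly $b$ along the linear space $V(Y_1,\dots,Y_r)$ and along all its transforms, whence $\Sing(\IDir(\IE))=V(Y_1,\dots,Y_r)\subseteq\Sing(\ITC(\IE))$ stably under permissible blowing ups and therefore $\IDir(\IE)\sim\IDir(\IE)\cap\ITC(\IE)$. Combining the two inclusions gives $\ITC(\IE)\sim\IDir(\IE)$, and with $\IRid(\IE)=\IDir(\IE)$ we conclude $\IDir(\IE)\sim\IRid(\IE)\sim\ITC(\IE)$.
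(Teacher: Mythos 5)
You should note first that the paper does not reprove this proposition at all; it imports it from \cite{BerndIdExp} (Proposition 2.14 and Corollary 2.12), so the comparison is against that cited argument and against the closely related proof of Theorem \ref{Thm:Equiv} in section 4. Measured this way, the first half of your proof has a genuine gap, and it sits exactly where you flag it. The claim $ \ITC(\IE_1) \sim \ITC(\IE_2) $ is an equivalence of pairs on $ gr_M(R) $, a polynomial ring over the residue field $ k $, whereas $ \IE_1 \sim \IE_2 $ is an equivalence on $ R $, which need not contain $ k $ (mixed characteristic is allowed throughout). Your deformation-to-the-tangent-cone strategy therefore has to convert an arbitrary sequence of local blowing ups over $ gr_M(R)[t] $, permissible for one lifted tangent pair but not the other, into a sequence over $ R[t] $ separating $ \IE_1[t] $ from $ \IE_2[t] $ --- and permissibility neither lifts nor specializes for free: a regular center in the special fibre of the degeneration need not extend to a flat family of centers that is regular and contained in the singular loci of both transforms, and orders can jump on the special fibre. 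That transfer is precisely the content of the cited Proposition 2.14, and it is where the auxiliary indeterminates $ (t) $ in Definition \ref{Def:sim} do real work; since you explicitly leave it open (``I expect this transfer to be the main obstacle''), the first display \eqref{eq:ideal_tan_dir_rid_equiv} is not established. The subsequent step is also only asserted: Facts \ref{Facts:Id_Exponents}(3) gives $ (J,b) \sim (\cD J, b-m) \cap (J,b) $, but by itself it does not show that the minimal system $ (Y) $ with $ In_M(J,b) \subseteq k[Y] \cdot gr_M(R) $, or the minimal additive system $ (\sigma) $, is an invariant of the equivalence class of $ \ITC(\IE) $; one needs an additional argument (in \cite{BerndIdExp} this passes through computing singular loci of derived pairs after auxiliary blowing ups and invoking the Numerical Exponent Theorem \ref{Thm_Num_Exp}), so $ \IDir(\IE_1) = \IDir(\IE_2) $ and $ \IRid(\IE_1) \sim \IRid(\IE_2) $ do not yet follow from your sketch.

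By contrast, your proof of the second display \eqref{eq:for_equiv} is correct and is essentially the argument the paper itself runs, in characteristic-free form, for Theorem \ref{Thm:Equiv}: since each $ \sigma_i = \cD_i F_i $ with $ \cD_i $ of order $ b - q_i \geq 0 $, one has $ q_i \leq b $, so $ \car{k} = 0 $ or $ b < \car{k} $ forces $ q_i = 1 $ and hence $ \IRid(\IE) = \IDir(\IE) $; the relevant binomial coefficients are units exactly in this characteristic range, so the order-$(b-1)$ Hasse-Schmidt derivatives of the degree-$ b $ generators span the directrix and Facts \ref{Facts:Id_Exponents}(3) yields $ \ITC(\IE) \sim \ITC(\IE) \cap \IDir(\IE) $. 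Your reverse absorption $ \IDir(\IE) \sim \IDir(\IE) \cap \ITC(\IE) $, obtained from $ In_M(J,b) \subseteq \langle Y \rangle^b $ and the stability of this containment under permissible transforms, is a sound direct argument; the paper reaches the same elimination via Facts \ref{Facts:Id_Exponents}(1) and (2). So the verdict splits: the ``moreover'' part is proved, by the same method as the paper, while the invariance statements rest on an acknowledged but unfilled core step.
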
	

\medskip

The first part, \eqref{eq:ideal_tan_dir_rid_equiv}, shows that the idealistic variant of the tangent cone, the directrix and the ridge of $ \IE $ are well-defined objects for idealistic exponents, $ \ITC (\IE_\sim ), \IDir (\IE_\sim), \IRid (\IE_\sim) $. 
In fact, the directrix is independent of the choice of the representative for $ \IE_\sim $.

Moreover, \eqref{eq:for_equiv} reveals a connection of these objects which is not seen in their non-idealistic variants.
In the proof the generators for the directrix resp.~the ridge are deduced from those of the initial ideal by applying certain differential operators. 
The restriction on the characteristic is necessary to obtain the equivalence on the left hand side.

In fact, if we translate this equivalence back to $ R $, we obtain elements $ ( y_1, \ldots, y_r ) $ which have maximal contact with $ \IE $.
The failure of the equivalence on the left hand side in general is one of the reasons why the proof of resolution of singularities over fields of characteristic zero can not be immediately extended to positive characteristic.
See also the section 3 for concrete examples.

\medskip

In Theorem \ref{Thm:Equiv} we extend the previous result to a characteristic-free variant which builds the basis for our characterization, Procedure \ref{CharacterizationProcedure}.

\begin{Ex}
	Let us come back to our running examples
	(Examples \ref{Ex:continues}, \ref{Ex:continues2}, \ref{Ex:continues3}).
	\begin{enumerate}
		\item 
		For $ \IE $, we have that
		$$
		\begin{array}{l}
			\ITC(\IE) =
			( \langle XY^{p^2}, \, Z^{p^2 - p + 1} \widetilde T^{p} \rangle , p^2 + 1),
			\\[5pt]
			\IDir ( \IE ) =
			(\langle X,Y,Z,\widetilde T \rangle, 1),
			\\[5pt]
			\IRid( \IE ) =
			( \langle X, Z \rangle , 1 ) 
			\cap 
			( \widetilde{T}^p, p)
			\cap 
			(Y^{p^2}, p^2).
		\end{array} 
		$$
		Facts \ref{Facts:Id_Exponents}(1)
		provides 
		$ \IDir(\IE) \sim  \IRid(\IE) $.
		(Recall the definition of the intersection of pairs \eqref{eq:intersect_pairs}).
		If we apply $ \dell{X} $ to $ \IT (\IE) $ and using Facts \ref{Facts:Id_Exponents}(3), we get that
		$
		\IT (\IE) \sim \IT(\IE) \cap (Y^{p^2}, p^2)
		$.
		We apply $ \dell{Y^{p^2} } $, $ \dell{\widetilde{T}^p} $,
			resp.~$ \dell{ Z^{p^2 - p + 1} } $
			to $ \IT(\IE )$.
			(For their precise definition of these differential operators, we refer to \eqref{eq:HasseSchmdit};
			what we use here is the property $ \dell{Y^{p^2}} X^a Y^b Z^d \widetilde{T}^e = \binom{d}{p^2} X^a Y^b Z^{d-p^2} \widetilde{T}^e $, for any $ a, b, c, d, e \in \IZ_\gqz $, and similarly for the others).
			We obtain that
		$$
		\IT (\IE) \sim \IT(\IE) \cap (Y^{p^2}, p^2)
		\cap (X,1) \cap ( Z^{p^2 - p + 1}, {p^2 - p + 1}) \cap (\widetilde{T}^p, p).
		$$
		By Facts \ref{Facts:Id_Exponents}(1) and (2), we eventually have that $ \IT(\IE) \sim \IRid(\IE) \sim \IDir (\IE) $.
		
		Suppose the differential operators 
		$ \dell{x} $,  $ \dell{y^{p^2} } $, $ \dell{\widetilde{t}^p} $,
		and $ \dell{ z^{p^2 - p + 1} } $
		exist in $ R $. 
		If we apply the same operations as before to $ \IE $, we can deduce (using the notations of Example \ref{Ex:continues3}(1)) that
		$ \IE \sim ( \langle g_1, g_2 \rangle , 1 ) \cap (g_3, p) \cap (g_4, p^2) \cap ( \langle v^{p^3}, f_3 \rangle, p^2 + 1 ) $.
		Note that $ \ord_M (( \langle v^{p^3}, f_3 \rangle, p^2 + 1 ) ) > 1 $
		(cf.~Theorem \ref{Thm:Reduction}).
		
		\medskip
		
		\item 
		For $ \IE' $, we have that
		$$
		\begin{array}{l}
		\ITC(\IE') =
		( \langle (X^p  + \lambda Y^p )Z^{p^2 - p }, \, Z^{p^2} + U^{p^2} + \lambda ( X^{p}  + \lambda Y^p )^p , p^2 ),
		\\[5pt]
		\IDir ( \IE' ) =
		(\langle X,Y,Z,U \rangle, 1),
		\\[5pt]
		\IRid( \IE' ) =
		(  X^p + \lambda Y^p , p ) 
		\cap 
		( Z^p, p)
		\cap 
		( U^{p^2}, p^2).
		\end{array} 
		$$
		We have that 
		$ \IT(\IE') \sim \IRid(\IE') 
		\sim 
		(\langle Z, U \rangle, 1 )
		\cap 
		(  X^p + \lambda Y^p , p ) $,
		but $ \IRid(\IE') \not \sim \IDir (\IE' ) $
		(cf.~Theorem \ref{Thm:Equiv}).
		Using the notations of Example \ref{Ex:continues3}(2), we can deduce under an assumption analogous to the one in (1) that
		$
		\IE' \sim ( \langle g_1', g_2' \rangle , p )
		\cap (g_3', p^2 ) \cap (tuv^{p^2}, p^2)
		$
		with $ \ord_M(tuv^{p^2}, p^2) > 1 $
		(cf.~Theorem \ref{Thm:Reduction}).
		We leave the details as an exercise to the reader.
	\end{enumerate}
	Note that the first part of Proposition \ref{Prop:ItcDirRidUnique} implies $ \IE \not\sim \IE' $ since $ \IDir (\IE) \neq \IDir (\IE') $.
\end{Ex}

\smallskip

To end this section let us mention the following extension of the previous notions which become important if one aims to obtain a reasonable resolution procedure.
In particular, this is useful to achieve condition (3) of Conjecture \ref{Conj:ERS}.

Let $ Z $ be a regular Noetherian scheme.
Recall that a reduced (Cartier) divisor $ B \subset Z $ is called a {\em simple normal crossing divisor} if 
each irreducible component of $ B $ is regular and they intersect transversally.

Let further $ D \subset Z $ be a regular closed subscheme and denote by $ \pi : Z' \to Z $ the blowing up with center $ D $.
We say that $ D $ has {\em at most simple normal crossings with $ B $} if the reduced divisors $ (E' \cup B')_{red} $, 
associated to the union of the exceptional divisors $ E' := \pi^{-1}(D) $ of $ \pi $ and the strict transform $ B' $ of $ B $, is a simple normal crossing divisor. 

\begin{Def}
\label{Def:scnd_perm}
	Let $ \IE = (J, b) $ be a pair on $ R $ and $ D \subset \Spec(R) $ a closed subscheme.
	Let $ \cB $ be a finite collection of irreducible (Cartier) divisors on $ \Spec ( R ) $ and denote by $ B $ the reduced divisor obtained from the union of the elements of $ \cB $.
	Suppose $ B $ is a simple normal crossing divisor.
	
	\begin{enumerate}
	\item
	Then $ D $ is called {\em permissible center for $ ( \IE, \cB ) $} (or {\em $ \cB $-permissible for $ \IE $}) if $ D $ is permissible for $ \IE $ and additionally $ D $ has at most simple normal crossings with $ B $.
	We say that a blowing up is $ \cB $-permissible if its center is.
		
	\medskip
	
	\item
	The transform of $ ( \IE, \cB ) $ under a $ \cB $-permissible blowing up is defined as $ ( \IE', \cB' ) $, where $ \IE $ is the transform as defined in Definition \ref{Def:perm} and $ \cB' = \widetilde{\cB} \cup \{ E' \} $, where $ \widetilde{\cB} $ is the collection of the strict transforms of the elements of $ \cB $ and $ E' $ denotes the exceptional divisor of the blowing up. 
		
	\medskip
	
	\item
	A {\em (local) resolution of singularities for $ ( \IE, \cB ) $} is a finite sequence of local $ \cB $-permissible blow-ups such that the final transform of $ \IE $ is resolved.
	\end{enumerate}
\end{Def}

\medskip

Since the center in (2) is $ \cB $-permissible, the divisor $ B' $ is automatically a simple normal crossing divisor.
Further, note that a resolution of singularities for $ ( \IE, \cB ) $ yields one for $ \IE $ whereas the converse is not necessarily true in general (even if $ \cB = \varnothing $) since we do not require the centers to be $ \cB $-permissible in the latter.

This kind of additional structure $ \cB $ given by some simple normal crossing divisor appears for example as the collection of the exceptional divisors of preceding blowing ups (or maybe one already wants to start the resolution process with this extra structure).

\begin{Ex}
\label{Ex:easy_B}
	Let $ R = k[[x,y,z]] $, where $ k $ is any field, and let $ J = \langle x^2 - y^3 z^2 \rangle $.
	We have $ \Sing ( J , 2 ) = V ( x, y ) \cup V( x, z ) $.
	In order to get a canonical resolution of singularities for $ (J,2) $ (resp.~$ J $) we have to blow up the origin $ V ( x, y, z) $.
	If we consider the transform of $ ( J, 2 )$ at the origin of the $ Z $-chart
	(i.e., at the point with coordinates $ (x',y,',z') := (\frac{x}{z}, \frac{y}{z},z)$), then we get the same pair. 
	Hence there is no obvious improvement.
	
	If we add the extra structure of the exceptional divisor, $ E' = V ( z' ) $, then it is clear that $ V (x', z' ) $ is contained in the exceptional divisor, whereas $ V( x',y') $ is not.
	Thus $ V( x',y') $ is the strict transform of the corresponding component before the blowing up and it is in some sense ``older" than $ V ( x', z') $.	
	
	If we have some extra structure at the beginning given by the regular divisor $ B = V( y + z^2 ) $ then we see that $ V ( x,y ) $ is {\em not} $ \cB $-permissible for $( J , 3 ) $.
\end{Ex}

If we desire to obtain a canonical resolution in the last example, we need to be able to distinguish the two components in the singular locus;
otherwise we would always blow up the origin and the process never ends. 
Therefore it is necessary to take the exceptional divisors of the resolution process into account. 

Another reason is that for applications it may become necessary that the preimage of a given singular variety/scheme under a resolution of singularities is a simple normal crossing divisor.
Since the exceptional divisors appear in the preimage, we have to ensure that they behave nicely during the resolution process.

Let us mention the following example which explains also the handling in the general case: 
Consider a curve embedded in a three dimensional regular ambient space and suppose we have resolved its singularities and the exceptional divisors have at most normal crossings with it. 
The preimage of the original variety is not yet a divisor. 
In order to achieve this one has to blow up the entire ($\cB $-permissible) curve itself at the end.
But note that condition (2) of Conjecture \ref{Conj:ERS} is then not fulfilled anymore.

%
%
%
%
%
%
%
%
%
%
%
%
%
%

\medskip

\section{A Characteristic Zero invariant}

In order to make more precise what we mean by improving the singularity, we briefly give the definition of the invariant by Bierstone and Milman \cite{BMfunct} for resolution of singularities over fields of characteristic zero in a variant adapted to our setting (with no restriction to the characteristic), cf.~\cite{BerndBM}.

Let $ X \subset Z $ be a closed subscheme of an excellent regular Noetherian scheme $ Z $ and let $ \cB $ be a finite set of irreducible divisors on $ Z $ such that the reduced divisor $ B $ defined their union has at most simple normal crossing singularities.
The invariant consists of a finite set of pairs $ ( \nu_i , s_i ) $, where $ \nu_i $ is the order of some pair and $ s_i $ counts certain divisors in $ \cB $,
$$
 \iota_X (x) := ( \nu_1 (x), s_1(x) ; \, \nu_2 (x), s_2(x): \ldots ) ,
 \ \ 
 \mbox{ for }
 \ x \in X,
$$ 
where we equip the string of numbers with the lexicographical order. 
Further, we denote by $  \iota_X (x)_{\kappa + \half} :=  ( \nu_1 (x), s_1(x) ; \ldots; \nu_{\kappa + 1} (x) ) $ the truncation of $ \iota_X(x) $  after the $ (2\kappa + 1) $-th entry.
The first invariant, is the Hilbert-Samuel function of $ X $ at $ x $ (\cite{BMfunct} section 1.3, see also the modified version \cite{CJS} Definition 1.28)
$$ 
	\nu_1 (x) := H_X ( x ) .
$$ 

It is possible that there are preceding blowing ups in the resolution process.
Let us fix some notation,

\begin{equation}
\label{eq:bigseq}
\begin{array}{ccccccccccccc}
\cB_0	&	& \cB_1	&	&	\ldots &	&	\cB_{i} &	&	\ldots &	&	\cB_{j-1}	& &	\cB_j =: \cB \\[8pt]
Z_0					& \stackrel{\pi_1}{\longleftarrow} &	Z_1	& \stackrel{\pi_2}{\longleftarrow} &	\ldots & \stackrel{\pi_{i}}{\longleftarrow} &	Z_{i} & \stackrel{\pi_{i + 1 }}{\longleftarrow}  & \ldots 	& \stackrel{\pi_{j-1}}{\longleftarrow} &	Z_{j-1}		& \stackrel{\pi_j}{\longleftarrow} 	&	Z_j =: Z	\\[5pt]
\bigcup	&	& \bigcup	&	& &	&	\bigcup &	& &	&	\bigcup	& &	\hspace{1cm}\bigcup\\[5pt]
X_0					& \longleftarrow &	X_1	& \longleftarrow &	\ldots	& \longleftarrow &	X_{i}	& \longleftarrow &	\ldots	& \longleftarrow &	X_{j-1}		& \longleftarrow 	&	X_j	=:X\\[5pt]%
& &	&  & &  & x_i & \leftmapsto & \ldots	& \leftmapsto &	 x_{j-1} & \leftmapsto &  x_j =: x,
\end{array}
\end{equation}
where $ X_0 \subset Z_0 $ and $ \cB_0 $ is the situation that the resolution process initially started with 
($ X_0 $ a closed subscheme of an excellent regular Noetherian scheme $ Z_0 $ and $ \cB_0 $ a finite set of irreducible divisors having simple normal crossings),
$ X_i $ (resp.~$ Z_i $) is the strict transform of $ X_{i-1} $ (resp.~$ Z_{i-1} $) under the blowing up $ \pi_i $, $ \cB_i $ is the union of the strict transforms of the components in $ \cB_{i-1} $ and the exceptional divisor of $ \pi_i $.

Further, $ x \in X $ is the point that we consider.
Thus $ R = \cO_{Z,x} $ and we associate $ \IE := \bigcap\limits_{i=1}^m ( f_i, b_i ) $ to $ X $ via choosing a standard basis for the ideal locally defining $ X $ at $ x $, as in Remark \ref{Rk:IE_to_x}.
We denote by $ \cB(x) $ the set of divisors in $ \cB $ passing through $ x $.
 
\begin{Rk}[\em Old and new exceptional divisors]
	Let $ H $ be an invariant measuring the complexity of the singularity of $ X $ at $ x $.
	We require that 
	$ H $ is upper semi-continuous
	(so that the locus where it is maximal is closed), 
	and 
	that $ H $ does not increase if we blow up a regular center contained in the locus where $ H $ is maximal.
	Assume that the centers of blowing ups in \eqref{eq:bigseq} are of the preceding type.
	We distinguish the components in $ \cB(x) $ as follows:
	
	We choose $ i \in \{ 0 , \ldots, j \} $ such that $ H $ last decreased after the $ i $-th blowing up, $ H(x_{i-1}) > H(x_i) = H(x) $, where $ x_\kappa = \pi_{\kappa+1} (x_{\kappa + 1}) $ for $ i \leq \kappa < j $. 
	We declare all divisors in $ \cB_i (x_i)$ to be \textit{old} \wrt $ H $, $ O_H(x_i) := \cB_i(x_i) $.
	This leads to the distinction $ \cB(x) = O_H(x) \cup N_H(x) $,
	where $ O_H(x) $ are the strict transforms of $ O_H(x_i) $, and $ N_H(x) := \cB(x) \setminus O_H(x) $ are said to be \textit{new} since they arose after $ H $ decreased the last time.	
\end{Rk}

\begin{Constr}
	\label{Constr.}
	Let $ x \in X \subset Z $ as before.
	Using the previous notations, we set
	$$
	 \IG_1(x) := \IE = (f_1, b_1) \cap \ldots \cap (f_m,b_m)
	$$
	and $ R $ is the local ring of $ Z$ at $ x $.
	Note that $ \ord_x ( \IE) = 1 $.
	We define
	$$
		s_1 (x) := \# O_{\nu_1} (x) .
	$$
	Let $ g_1, \ldots, g_{s_1} \in R $ be local generators for the old divisors.
	We put
	$$
		\IF_1 (x) := \IG_1(x) \cap ( \langle g_1, \ldots, g_{s_1} \rangle , 1 ),
	$$ 
	and we replace $ \cB(x) $ by $ \cB^{(1)}(x) := N_{\nu_1} ( x) $.
	If there exists no hypersurface of maximal contact for $ \IF_1 (x) $, we stop and set $ \iota_X(x) := (\nu_1(x), s_1(x) ) $.
	
	Suppose $ V (z_1) $ has maximal contact with $ \IF_1(x) $.
	Let $ ( w) = (w_2, \ldots, w_n) $ be any set of regular elements extending $ z_1 $ to a \RSP for $ R $.
	We pass to the coefficient pair
	$$
		\ID_1 (x) : = \ID (\IF_1(x);  w; z_1 ) =: (I_1,d_1).
	$$
	on $ R_1 = R/ \langle z_1 \rangle  $. 
	We may consider $ \cB^{(1)}(x) $ as a set of divisors in $ \Spec(R_1) $.
	Denote by $ \eta_1, \ldots, \eta_a $ the generic points of the components in $ \cB^{(1)}(x) $.
	We define 
	$$
		\nu_2(x) := \ord_x (\ID_1(x)) - \sum_{i=1}^a \ord_{\eta_i} (\ID_1(x)) \geq 0 ,
	$$
	$$
		\iota_X(x)_{1 + \half} := (\nu_1(x), s_1(x);\, \nu_2(x) ).
	$$
	We have a factorization $ I_1 = M(I_1)\cdot N(I_1) $,
	where $ M(I_1) $ is a monomial in the components coming from $ \cB^{(1)} (x) $ 
	(that corresponds to the sum that we subtract in the definition of $ \nu_2(x) $)
	and $ N(I_1) $ is an ideal in $ R_1 $ such that none of the components in $ \cB^{(1)}(x) $ factors from it.
	We associate the so called {\em companion pair} to this:
	$$
		\IG_2 (x) 
		:= 
		\left\{ 
		\begin{array}{ll}
			(N(I_1), d \cdot \nu_2(x)),
			&
			\mbox{if } \nu_2(x) \geq 1 ,
			\\[5pt]
			(N(I_1), d \cdot \nu_2(x))
			\cap 
			(M(I_1), d \cdot (1 - \nu_2(x) ) ),
			&
			\mbox{if } \nu_2(x) < 1 .
		\end{array}
		\right.
	$$
	We repeat the previous steps for $ \IG_2(x) $ instead of $ \IG_1(x) $. 
	The old components in $ \cB^{(1)}(x) $ are now considered to be old \wrt $ \iota_X(x)_{1+\half } $.
	This ends when either there does not exist a hypersurface of maximal contact,
	or $  M(I_\kappa ) = I_\kappa $, for some $ \kappa \geq 1 $.
	In the latter case, $ \nu_{\kappa +1 }(x) = 0 $ and this is called the {\em monomial case}, where a simple combinatorial procedure can be applied to lower the invariant
	(see \cite{BMfunct} section 5 Step II Case A).
\end{Constr}

\begin{Rk}
	While maximal contact always exists locally over a field of characteristic zero, this is not true in general.
	Hence $ \iota_X(x) $ can only be the beginning of an appropriate invariant, in general, and there is a need to find a second part controlling the case where there is no maximal contact.
	
	\smallskip
	
	The constructed invariant has the following properties
	(see \cite{BMfunct} Theorem 7.1):
	(1) $ \iota_X $ distinguishes regular and singular points, 
	(2) $ \iota_X $ is upper semi-continuous,
	(3) $ \iota_X $ can not increase under a blowing up with a center contained in the maximal locus of $ \iota_X $, 
	(4) $ \iota_X $ can only decrease finitely many times strictly until the singularities are resolved. 
\end{Rk} 

%
%
%
%
%
%
%
%
%
%
%
%
%

\medskip

\section{First examples for the reduction}

Let us discuss some examples before coming to the precise formulation of the reduction result.
All examples are of the same shape:

The ambient scheme is $ Z = \Spec (S[y]) $,
where $ (y) = (y_1, \ldots, y_r) $ and 
$ S $ is a regular local ring with parameters $ ( u) = (u_1, \ldots, u_e) $
(e.g., $ S = k[u]_{\langle u \rangle} $, for some field $ k $, or $ S = \IZ[u_2,\ldots, u_e]_{\langle p,u_2, \ldots, u_e \rangle} $, for $ p \in \IZ_+ $ prime);
$ \cB= \varnothing $ and
$ X = V ( f ) $ is a hypersurface in $ Z $,
for
$$
	f = y^N + h(u) 
	\in \langle u ,y \rangle^{|N|},
	\ \ \
	\mbox{ where }
	\
	h \in \langle u  \rangle^{|N|+1}
	\
	\mbox{ and }
	\
	|N| \geq 2.
$$
Further, $ R = S[y]_{\langle u, y \rangle } $
is the local ring of $ Z $ at the closed point $ x_0 = V( u,y ) $ (with maximal ideal $ M = \langle u ,y \rangle $) and $ p > 0 $ denotes the characteristic of the residue field of $ R $.

\begin{Ex}
	\label{Ex:More_detail_Example}
 Let $ r= 1 $, $ p > 2 $, and let $ X = V( f ) \subset Z $ be defined by
 $$
	f := y^2 + h ( u_1, u_2, u_3 ) .
 $$
 Therefore we consider the pair $ \IE := ( f , 2 ) $ in $ R $. 
 If we apply the derivative by $ y $, we obtain the equivalences (using Facts  \ref{Facts:Id_Exponents})
 \begin{equation}
 \label{eq:equivlanece_in_example}
 	( f, 2 ) \sim  ( 2y, 1 ) \cap ( f, 2 )  \sim ( y, 1 ) \cap ( f, 2 ) \sim ( y, 1 ) \cap ( h, 2 ),
 \end{equation}
 where we use in the second equivalence that $ 2 $ is invertible in $ R $.
 Hence $ V ( y ) $ has maximal contact with $ \IE $ (Definition \ref{Def:max_contact})
 and $ \iota_X(x_0)_{1 + \half}  = (2,0; \frac{m_0}{2}) $, where $ m_0 := \ord_{M}(h) $. 
 This allows us to reduce the problem of finding a resolution for  $ \IE $ (resp.~for $ X $) to the corresponding problem for the surface $ Y = V ( h )  \subset \Spec (S) $  determined by the coefficient pair $ \ID ( \IE; u ;y) = (h, 2 ) $.
 
 Since resolution of singularities via blowing ups in regular centers is known for surfaces \cite{CJS}, we can proceed as follows:
 By induction on the dimension, we can resolve $ \ID^* := ( h, m_0 ) $, i.e., if we denote by $ (h_1, m_0 ) $ the transform of $ \ID^* $, say in $ S_1 $ locally at a point $ x_1 $ corresponding to the ideal $ M_1 $, then $ \Sing (h_1, m_0 ) = \varnothing $ or, equivalently,  $ \ord_{M_1} (h_1) < m_0 $.
 Note that we can lift the sequence of blowing ups as explained in Remark \ref{Rk:Blowup_on_V(y)_back_to} to one in $ R $ and we denote by $ X_1 $ the strict transform of $ X $.
 
 We distinguish the parameters in $ S_1 $ as $ ( u, v ) = (u_1, \ldots, u_e; v_1, \ldots, v_d ) $ such that each $ u_i $ corresponds to an exceptional divisor passing through $ x_1 $, while $ (v) $ is any set of elements extending $ ( u ) $ to a system of parameters for $ R_1 $.
 (By abuse of notation, we continue to use the letters $ u_i $ instead of $ u_i' $).
  
 Note that the transform of $ ( h,2) $ under the preceding blowing ups is $ ( \epsilon u^A h_1, 2 ) $, for some $ A = (A_1, \ldots, A_e) \in \IZ^{e}_{\geq 0} $ and a unit $ \epsilon \in R_1^\times $
 (which comes from exceptional divisors not containing $ x_1 $).
 Thus $ \iota_{X_1} (x_1)_{1+ \half} = (2, 0; \frac{m_1}{2}) <  \iota_{X} (x_0)_{1+ \half} $,
  where $ m_1 := \ord_{M_1} (h_1) $.
 If $ \Sing(h_1,2) \neq \varnothing $, we repeat the previous step and resolve $ ( h_1, m_1) $.
 Since $ m_1 < m_0 $, we reach after finitely many steps $ \Sing(h_\ell,2) = \varnothing $, for $ \ell \geq 1 $.
 Without loss of generality, we assume that $ \Sing(h_1,2) = \varnothing $.

 If $ \Sing ( \epsilon u^A h_1, 2 )  = \varnothing $, then 
 \eqref{eq:equivlanece_in_example} provides that we obtain a resolution of $ (f,2) $.
 Suppose this is not the case.
 Then we have to pass to the companion pair,
 $$
	 \IG := (h_1, d) \cap (u^A, 2 - d),
	  \ \
	  \mbox{ with } 
	  \ d := \ord_{M_1} (h_1) \leq  1.
 $$
 (If $ d = 0 $, we neglect $ ( h_1, d ) $).
 In other words, we also take the monomial $ u^A $ into account in order to obtain centers that are permissible for the transform of $ ( f, 2) $.
 We then resolve $ \IG $ which will either result in a decrease of the order of $ h_1 $ (so that we are left with the exceptional monomial) or in a decrease of $ A $.
 By lifting the sequence of blowing ups, we eventually obtain a resolution for $ ( f, 2) $.
 Alternatively, it is also not too hard to construct an explicit resolution for $ (f_1, 2) = (y^2 +  u^A h_1, 2 ) $ with $ \ord_{M_1} (h_1) \leq 1 $ (for example, using ideas of \cite{BMbinomial} or \cite{KollarToroidal}).
\end{Ex}

\begin{Ex}
	Suppose $ S = \IZ_{(p)} $, for $ p $ prime.
	Let $ r = p $ and $ X = V(f) \subset Z = \Spec (S[y_1,\ldots, y_p]) $ be defined by
	$$
		f := p^p + y_1 y_2 \cdots y_p.
	$$
	Using Facts \ref{Facts:Id_Exponents}, we obtain:
	$$
		\IE:= (f,p) \sim (p^p  + y_1 y_2 \cdots y_p, p ) \cap (\langle 
		y_1, \ldots, y_p \rangle , 1 ) 
		\sim (\langle p, y \rangle , 1 ). 
	$$
	Therefore $ V(p, y ) = \Sing (\IE) $ is the only permissible center. 
	After blowing it up, the idealistic exponent is resolved in every chart,
	while the strict transform $ X ' $ of $ X $ is still singular.
	For example, at the point with coordinates $ (z',y_1',\ldots, y_p') = (\frac{p}{y_1},y_1, \frac{y_2}{y_1},\ldots, \frac{y_p}{y_1}) $, we get $ X ' = V(f') $ with $ f' = z'^p + y_2' \cdots y_p' $.
	(Note that the ambient scheme is now $ Z' =  \Spec ( \IZ_{(p)}[z', y']/\langle p - z' y_1 \rangle) $).
	Nonetheless, since the order is at most $ p - 1 $, one can achieve a resolution of $ X $ with the methods of characteristic zero.
\end{Ex}

\begin{Ex}
 Let $ r = 1 $, $ p > 2 $, and let $ X = V( f ) \subset Z = \Spec (S[y]) $ be defined by
 $$
	f := y^4 + h ( u_1, u_2, u_3 ),
	 \ \
	 \mbox{ with }
	 \
	 h \in \langle u \rangle^5. 
 $$
 Since $ 4 $ is a unit in $ S $ we get as before
 $$
 	\IE := (f, 4 ) \sim (y, 1) \cap ( f, 4)  \sim (y, 1) \cap ( h, 4).
 $$
 In particular, $ V ( y ) $ has maximal contact with $ \IE $.
 This reduces the resolution problem of $ X $ to that of $ Y = V ( h ) \subset \Spec(S) $
 and we can resolve $ ( f, 4 ) $ with the methods of the previous example.
 But this time we do not obtain a full local resolution of singularities for $ X $
 since the singularity given by the strict transform $ f' $ of $ f $ is not necessarily regular.
 We only know that the order at every point is below $ 4 $.

 As a special case of this example, the strict transform of $ f $ could be of the form $ f' = u_1^3 + \widetilde{h}(u_1, u_2, u_3, y) $ with $ \widetilde{h} \in \langle u_1, u_2, u_3, y \rangle^4 $. 
 If $ p = 3 $, we can not deduce that $ V ( u_1 ) $ has maximal contact with the techniques of the previous section (which is in general false, even if there is no $ u_1 $ appearing in $ \widetilde{h} $).
\end{Ex}

\medskip

Nevertheless, we see that there exist special cases in which we can reduce the resolution problem for a given singularity to one in lower dimensions.
The last example can be formulated more generally as 

\begin{Ex}
	Let $ r = 1 $, $ n \in \IZ_{\geq 2} $ with $ (n,p)= 1 $ and let $ X = V( f ) \subset Z = \Spec (S[y]) $ be defined by
 $$
	f := y^n + h_2(u) y^{n-2} + \ldots +  h_n ( u),
	 \ \
	 \mbox{ with }
	 \
	 h_i(u) = h_i( u_1, u_2, u_3 ) \in \langle u \rangle^{i+1}. 
 $$
Since $ (n,p)= 1 $ , $ n \in S^\times $ is a unit.
 Therefore $ V ( y ) $ has maximal contact at the at  $ x_0 = V(u,y) $,
  $$
  	\IE := (f, n ) \sim ( y , 1) \cap ( f, n)  \sim (y, 1) \cap  \bigcap_{i=2}^n ( h_i, i ) = (y, 1) \cap \ID(\IE;u;y)
  $$
  Since $ S $ has dimension three, we can resolve $ \ID(\IE;u;y) $ as before.
  Using the techniques of Example \ref{Ex:More_detail_Example} in this slightly more general variant, we obtain a resolution for $ \IE $ and hence an improvement of the singularities of $ X $, i.e., a decrease of $ \iota_X(x)_{1+\half} $.
\end{Ex}

\medskip

Sometimes this reduction can also be applied for higher dimensional varieties.

\begin{Ex}
	Let $ r = 2 $ and let $ X = V( f ) \subset Z = \Spec (S[y_1,y_2]) $ be defined by
 $$
	f := y_1^p \, y_2^p + h ( u_1, u_2, u_3 )
	\ \
	\mbox{ with }
	\
	h \in \langle u \rangle^{2p+1}.
 $$
 Using the differential operator $ \dell{y_1^p } $ (resp.~$ \dell{y_2^p } $) which sends $ y_1^p $ (resp.~$ y_2^p $) to one (see \eqref{eq:HasseSchmdit} below for more on these) one can show that $ V ( y_1, y_2 ) $ has maximal contact with $ \IE := ( f, 2p ) $, and
 $$
 	 (f, 2p ) \sim ( y_1, 1) \cap ( y_2, 1) \cap ( h, 2p)  .
 $$ 
 Again, we reduced the problem of lowering the order of $ f $ to the problem of finding a resolution for $ ( h, 2p) $ in $ S $.
 As before, we can achieve that the invariant drops below $ \iota_X (x_0)_{2+\half} = (2p,0;1,0;\frac{m_0}{2p +1 }) $, where $ m_0 := \ord_{M}(h) $.
 (Note that the case $ p = 2 $, i.e., order $ 4 = p^2 $, is not excluded).
\end{Ex}

\begin{Ex}
	\label{Ex:moresteps1}
	Let $ r = 3 $, set $ z_3 := y_3 $, and let $ X = V( f ) \subset Z = \Spec (S[y_1,y_2,z_3]) $ be defined by
 $$
	f := y_1^p y_2^p +  ( z_3^{ 2p +  1}  + h ( u_1, u_2, u_3 ) ) = 0
	\ \
	\mbox{ with }
	\
	h \in \langle u \rangle^{2p+2}.
 $$
 	As explained in the previous example the resolution of $ \IE := (f, 2p ) $ reduces to that of $ \ID_1 := ( z_3^{ 2p +  1}  + h ( u_1, u_2, u_3 ) \,, 2p ) $.
 	Since $ 2p + 1 $ is prime to $ p $, we can reduce the resolution problem for $ \ID_1^\ast:= ( z_3^{ 2p +  1}  + h ( u_1, u_2, u_3 ) \,, 2p + 1 )  $ further to the one for $ ( h ( u_1, u_2, u_3 ) \,, 2p + 1 ) $.
 	The latter is in $ S $ and hence can be solved as in Example \ref{Ex:More_detail_Example}, and the same techniques show that the original singularity improves, i.e., the invariant drops below $ \iota_X(x_0)_{2+\half} = (2p, 0; 1,0; \frac{2p+1}{2p}) $ after finitely many permissible blowing ups. 
\end{Ex}

\begin{Ex}
\label{Ex:Not_full_Reso}
	Let $ r = 2 $, set $ z := y_2 $, and let $ X = V( f ) \subset Z = \Spec (S[y, z]) $ be defined by
	$$
		f := y^{p^2 + 1 } + z^{2p^2 + 1 } + h (u_1, u_2) 
		\ \
		\mbox{ with }
		\
		h \in \langle u \rangle^{2p^2+2}.
	$$
	We have $ \IE: = ( f, p^2 + 1 ) \sim ( y, 1 ) \cap ( z^{2p^2 + 1 } + h, p^2 + 1 ) $ and further	$ \ID_1^* :=  ( z^{ 2 p^2 + 1 } + h, 2 p^2 + 1 ) \sim ( z, 1 ) \cap ( h, 2 p^2 + 1 ) $.
	Thus $ \iota_X(x_0)_{1+\half} = (p^2 + 1, 0 ; \frac{2p^2 + 1}{p^2 + 1}) $.
	Any permissible center will be of the form $ V ( y, z, g_1 , \ldots, g_a  ) $ for certain $ g_i \in S $.
	If we blow up and consider the origin of the $ Z $-chart, then
	$ \IE' =  ( y^{p^2 + 1 } + z^{p^2 } + \widetilde h (u_1, u_2, z ) , p^2 + 1 ) $ is resolved.
	But in the next step of the resolution process for $ X $, we have to deal with
	$$
		\IE^{new} := ( z^{p^2} + y^{p^2 + 1 } + \widetilde h (u_1, u_2, z ) ,\, p^2 ) 
	$$ 
	and it is not clear why $ \IE \sim ( z, 1 ) \cap \IE$ should be true.
	(In general, this is false!)
\end{Ex}

 Narasimhan gave in \cite{Nara} an example of a threefold ($ t^2 + x y^3 + y z^3 + x^7 z = 0 $ over a field of characteristic $ p = 2 $) whose singular locus has embedding dimension four.
Hence there can not exist a regular hypersurface containing the singular locus.
In particular, there is no regular hypersurface of maximal contact.

Let us mention the following construction in connection with this.

\begin{Ex}
	Let $ m \in \IZ_+ $ and $ f_0, f_1, \ldots, f_m \in S[z_1, \ldots, z_m] $
	with $ f_i \in \langle z, u_1, \ldots, u_{e} \rangle $.
	The singular locus of the hypersurface $ X = V( f ) \subset Z = \Spec (S[z] ) $ defined by 
	$$	
		f := f_0^p + z_1 f_1^p + \ldots z_m f_m^p \, ,
	$$ 
	is $ \Sing ( X ) = V ( f_0, f_1, \ldots, f_m ) $.
	Using this, we can easily produce other examples, where the embedding dimension of the singular locus (resp.~of the locus of order $ p $) is $e + m $.	
\end{Ex}

%
%
%
%
%
%
%
%
%
%
%
%
%

\medskip

\section{The criterion for a possible reduction}

In this section we generalize the second part of Proposition \ref{Prop:ItcDirRidUnique} to a characteristic-free variant. 
By translating the equivalence to $ R $ we deduce the criterion to decide when a local resolution problem can be reduced to one in lower dimensions.

Recall that two pairs $ \IE_1 $ and $ \IE_2 $ on some ring $ S $ are equivalent, $ \IE_1 \sim \IE_2 $, if the following holds (see Definition \ref{Def:sim}):
For any finite system of independent indeterminates 
$ (t) = (t_1, \ldots, t_a ) $, 
any sequence of local blowing ups (over $S[t]$) which is permissible for $  \IE_1[t] $ is also permissible for $ \IE_2[t] $ and vice versa
(where $ \IE_i [t] = (J_i \cdot S[t], b) $ for $ \IE_i= (J_i,b_i) $ and $ J_i \subset S $).

\begin{Thm}
\label{Thm:Equiv}
	Let $ R $ be a regular local ring
	and $ \IE_\sim $ an idealistic exponent on $ R $.
	Assume $ \Sing ( \IE_\sim ) \neq \varnothing $.
	Then we have that
	\begin{equation}
	\label{eq:equiv_IRid_IT}
		\IRid( \IE_\sim ) \sim \ITC(\IE_\sim ).
	\end{equation}  
	If furthermore the reduced ridge coincide with the directrix, then we even obtain the equivalence $ \IDir (\IE_\sim ) \sim \IRid ( \IE_\sim )  \sim \ITC(\IE_\sim ) $.
\end{Thm}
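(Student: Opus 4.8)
The plan is to fix a representative $\IE = (J,b)$ of $\IE_\sim$ (legitimate since $\ITC$ and $\IRid$ are well defined up to equivalence by the first part of Proposition~\ref{Prop:ItcDirRidUnique}) and to prove $\IRid(\IE) \sim \ITC(\IE)$ by establishing both directions of the natural preorder on pairs. I write $\IE_a \leq \IE_b$ when, over every $R[t]$, each sequence of local blowing ups permissible for $\IE_a$ is permissible for $\IE_b$; then $\IE_a \sim \IE_b$ holds exactly when $\IE_a \leq \IE_b$ and $\IE_b \leq \IE_a$, and, using the characterization of permissibility for an intersection recorded after Definition~\ref{Def:sim}, the relation $\IE_a \leq \IE_b$ is equivalent to $\IE_a \sim \IE_a \cap \IE_b$. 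Two monotonicity facts will be used repeatedly: first, $I \supseteq I'$ forces $(I,b) \leq (I',b)$, since a larger ideal has smaller order, hence smaller singular locus, and the inclusion is stable under controlled transform; second, $\IE_a \leq \IE_b$ together with $\IE_a \leq \IE_c$ gives $\IE_a \leq \IE_b \cap \IE_c$.

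For the direction $\ITC \leq \IRid$, since $\IRid = \bigcap_{i=1}^s (\sigma_i, q_i)$ it suffices to show $\ITC \leq (\sigma_i, q_i)$ for each $i$. Here I would invoke \eqref{eq:diff_sigma}: for a suitable system $(\sigma)$ defining the ridge there is a Hasse--Schmidt derivative $\cD_i$ on $gr_M(R)$ of order $b - q_i$ and an $F_i \in In_M(J,b)$ with $\cD_i F_i = \sigma_i$. Applying the Diff Theorem, Facts~\ref{Facts:Id_Exponents}(3), with $\cD_i$ to the ideal $In_M(J,b)$ of $\ITC$ yields $\ITC \sim (\cD_i\, In_M(J,b),\, q_i) \cap \ITC$, whence $\ITC \leq (\cD_i\, In_M(J,b), q_i)$. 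Because $\sigma_i \in \cD_i\, In_M(J,b)$, the first monotonicity fact gives $(\cD_i\, In_M(J,b), q_i) \leq (\sigma_i, q_i)$, and transitivity yields $\ITC \leq (\sigma_i, q_i)$.

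For the direction $\IRid \leq \ITC$, I would use that by \eqref{eq:ridge} the homogeneous ideal $In_M(J,b)$ admits degree-$b$ generators $G_1, \ldots, G_t$ lying in $k[\sigma_1, \ldots, \sigma_s]$, so that Facts~\ref{Facts:Id_Exponents}(1) gives $\ITC \sim \bigcap_j (G_j, b)$ and it is enough to prove $\IRid \leq (G_j, b)$. Writing $G_j = \Phi_j(\sigma)$ as a $k$-combination of monomials $\sigma^\alpha$ with $\sum_i \alpha_i q_i = b$, the crucial observation is that this polynomial relation is preserved under any blowing up permissible for $\IRid$: the controlled transforms satisfy $\sigma_i R' = \sigma_i' H^{q_i}$, hence $G_j R' = \Phi_j(\sigma) R' = \Phi_j(\sigma')\, H^b$, so the transform of $(G_j,b)$ is precisely $(\Phi_j(\sigma'), b)$. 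Therefore at every stage, if the center with generic prime $P$ satisfies $\ord_P(\sigma_i') \geq q_i$ for all $i$, then $\ord_P(\Phi_j(\sigma')) \geq \min_\alpha \sum_i \alpha_i \ord_P(\sigma_i') \geq \min_\alpha \sum_i \alpha_i q_i = b$, so the center is permissible for the transform of $(G_j,b)$. As passing to $R[t]$ leaves these relations intact, an induction on the length of the sequence gives $\IRid \leq (G_j, b)$, hence $\IRid \leq \ITC$. Having both inequalities, $\ITC \sim \IRid$.

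For the second assertion, when the reduced ridge coincides with the directrix I may take $\sigma_j = Y_j^{q_j}$ with $s = r$; then Facts~\ref{Facts:Id_Exponents}(1) gives $(Y_j^{q_j}, q_j) \sim (Y_j, 1)$ and $(\langle Y_1,\ldots,Y_r\rangle,1) \sim \bigcap_j (Y_j,1)$, so $\IDir(\IE) \sim \bigcap_j (Y_j,1) \sim \IRid(\IE)$, and the chain $\IDir \sim \IRid \sim \ITC$ follows. I expect the genuine difficulty to be concentrated entirely in \eqref{eq:diff_sigma}, that is, in the input from Giraud's theory of the ridge asserting that the additive polynomials defining $\Rid_M(\IE)$ can be realized as Hasse--Schmidt derivatives of elements of the initial ideal; granting this, everything else is a formal manipulation with the properties of pairs collected in Facts~\ref{Facts:Id_Exponents}.
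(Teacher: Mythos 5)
Your skeleton is sound and in fact parallels the paper's: you split the claim into the two dominations, use the Diff Theorem (Facts \ref{Facts:Id_Exponents}(3)) plus ideal-inclusion monotonicity for $\ITC \leq \IRid$, and use that $In_M(J,b)$ has weighted-homogeneous generators in $k[\sigma_1,\ldots,\sigma_s]$ for $\IRid \leq \ITC$; your hands-on transform computation for the latter direction (with $\sigma_i R' = \sigma_i' H^{q_i}$ and the order estimate on $\Phi_j(\sigma')$) is a correct unwinding of what the paper obtains more quickly from Facts \ref{Facts:Id_Exponents}(1) and (2), noting $\Sing(\sigma_i^{a}, a q_i + 1) = \varnothing $ because $\sigma_i$ has degree $q_i$. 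The treatment of the second assertion via $ (Y_j^{q_j}, q_j) \sim (Y_j, 1) $ also matches the paper. However, there is a genuine gap at the first direction: you ``invoke'' \eqref{eq:diff_sigma}, but \eqref{eq:diff_sigma} is not an external, citable input. In the paper it is an introductory preview of precisely what the proof of Theorem \ref{Thm:Equiv} establishes; assuming it makes your argument circular, since the construction of the operators $\cD_i$ and elements $F_i$ with $\cD_i F_i = \sigma_i$ \emph{is} the entire content of the theorem. Your closing sentence concedes that the difficulty is concentrated there, which is exactly the point: what remains is not a quotable lemma but the heart of the proof.

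Concretely, what is missing is the following chain of arguments from the paper's proof. One puts $(\sigma)$ in triangular form \eqref{eq:triangular}; for $s = 1$ the generators of $In_M(J,b)$ are of the form $\lambda \tau^{b/q}$ and Facts \ref{Facts:Id_Exponents}(1) already suffices. For $s \geq 2$ one expands a generator as $F = \sum C_{A,d}\, \tau^d \sigma_+^A$ and uses the \emph{minimality} of the system defining the ridge to find a generator admitting some $d_0$ prime to $p$ with $C_{A^0,d_0} \neq 0$ (otherwise $\tau$ would have to be replaced by $\tau^p$, contradicting minimality); one then applies $\cD_\beta$ with $\beta = q(d_0-1)$, computing the binomial coefficients modulo $p$ via Lucas' theorem \cite{Lucas}, followed by a second Hasse--Schmidt operator $\cD_0$, to land on $d_0 \tau + s_1$ in degree $q$. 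Crucially, the remainder $s_1$ need not vanish: one must show it can be made additive and then absorb it by replacing $\tau$ with $\tau + s_+$, checking that the modified system still defines the ridge and can be kept triangular through the iteration over $\sigma_2, \ldots, \sigma_s$. This is the ``suitable choice of $(\sigma)$'' caveat in \eqref{eq:diff_sigma} that your argument silently needs: for an arbitrary fixed defining system the clean identity $\cD_i F_i = \sigma_i$ can fail, and moreover both of your directions must run with one and the same system $(\sigma)$ (harmless only because the subalgebra $k[\sigma]$ is intrinsic, but the required modifications are exactly where the work lies). If you instead intend to source \eqref{eq:diff_sigma} from the theory of the ridge, you would need to quote a precise statement from \cite{GiraudEtude}, \cite{BHM}, or \cite{BernhardThesis} and verify it yields operators of order exactly $b - q_i$ in the form needed here; the proposal does neither.
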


\medskip

\smallskip 

Recall that $ \IRid(\IE_\sim) $ and $ \IT (\IE_\sim) $ are idealistic exponents in the graded ring $ gr_M (R) $. 
If $ (w) = (w_1, \ldots, w_n ) $ is a \RSP for $ R $, then we have
$ gr_M(R) \cong k[W_1, \ldots, W_n ] =: S $, where $ k = R/M $ is the residue field of $ R $.

\smallskip 

The main ingredient for the proof of the above proposition are $ k $-linear differential operators $ \Diff_k(S) $ in $ S $ and the Diff~Theorem, Facts \ref{Facts:Id_Exponents}(3).
If $ \car{k} = 0 $, then $ \Diff_k(S) $ is generated by the derivations. 
This is not true if $ \car{k} = p > 0 $, where also 
{\em Hasse-Schmidt derivations} appear
(\cite{GiraudMaxPos}, sections 2.5, 2.6, 
or \cite{BernhardThesis} Example (3.3.1)):
Let $ f (W) \in S $ and $ (T) = (T_1, \ldots, T_n) $ be a set of independent variables. 
We have
$$
	f(W+T) = f(W_1+T_1, \ldots, W_n + T_n)
	= \sum_{N \in \IZ^n_{\geq 0}} f_N(W) \, T^N
	\in k[W,T].
$$
Given $ N = (N_1, \ldots, N_n ) \in \IZ^n_{\geq 0} $, the Hasse-Schmidt derivative of $ f $ with respect
$ W^N = W_1^{N_1} \cdots W^{N_n} $ is defined as
$$
	\cD_N (f) := 
	\frac{\partial}{\partial W^N} f := f_N(W ) \in S .
$$
Hence we have that, for any $ B , N \in \IZ^n_{\geq 0} $ and $ \lambda \in k $,
\begin{equation}
	\label{eq:HasseSchmdit}
	\cD_N ( \lambda W^B ) = \lambda \binom{B}{N}  W^{B-N}, 
\end{equation}
where $ \binom{B}{N} = \binom{B_1}{N_1} \cdots \binom{B_n}{N_n} $, and $ \binom{B_i}{N_i} = 0 $ if $ N_i > B_i $.
In particular, $ \frac{\partial}{\partial W_1^q} W_1^q = 1 $, for any $ p $-power $ q $.
(Neglecting the characteristic, we can write the Hasse-Schmidt derivative as
$ \frac{\partial}{\partial W^N} = \frac{1}{N_1! \cdots N_n!}\cdot \big( \frac{\partial}{\partial W_1} \big)^{N_1} \cdots \big( \frac{\partial}{\partial W_n} \big)^{N_n} $).

\begin{proof}[Proof of Theorem \ref{Thm:Equiv}]
	By Proposition \ref{Prop:ItcDirRidUnique}, we may assume $ \car{k} = p > 0 $, where $ k = R/M $.
	Let $ \IE = ( J, b ) $ be a representative of $ \IE_\sim $.
	We set $ I := In_M (J,b) $.
	Let $ ( Y ) = ( Y_1, \ldots, Y_r ) $ (resp. $ ( \sigma ) = ( \sigma_1, \ldots, \sigma_s ) $) be elements defining the directrix (resp.~the ridge) of $ \IE $.
	We pick $ (\sigma) $ in triangular form \eqref{eq:triangular}.
	We have $ \IT (\IE) = ( I, b ) $, and $ \IRid (\IE) = \bigcap_{i=1}^s ( \sigma_i, q_i ) $, and $ gr_M (R) \cong k[ Y, U ] $, where $ ( U ) = ( U_1, \ldots, U_e ) $ is a system of elements extending $ ( Y ) $ as before. 
	Set 
	\begin{center}
		$ \tau := \sigma_1 $, $ q := q_1 $, and $ Z := Y_1 $.
	\end{center}
	
	{\em Case $ s = 1  $:} 
	By the definition of the ridge, there exists a system of generators for $ I $ such that every element $ F $ in it is of the form $ F = \lambda_{m} \tau^m $, for some $ \lambda_{m} \in k^\times $ and $ m = \frac{b}{q} \in \IZ_+ $.
	Facts \ref{Facts:Id_Exponents} implies $ ( F, b ) \sim ( \tau , q ) $ and thus
	$
		\IT(\IE) = (I,b) \sim (I,b) \cap (F,b) \sim (I,b) \cap ( \tau, q ) \sim (\tau, q ) = \IRid (\IE).
	$

	\smallskip 	
	
	{\em Case $ s \geq 2 $}:
	By the definition of the directrix, we can choose as system of generators for $ I $ such that any element $ F $ in the system  is
	homogeneous of degree $ b $ \wrt $ ( Y ) $. 
	Further, $ F $ can be written as $ F = \sum C_{A,d} \tau^d \sg_+^A $, $ d \in \IZ_{\geq 0} $, $ A \in \IZ_{\geq 0}^{s-1} $, $ ( \sigma_+ ) := (\sigma_2, \ldots, \sigma_s ) $.
	There exists at least one generator for which there is some $ d_0 \in \IZ_+ $, $ ( d_0, p ) = 1 $, and  $ C_{A^0, d_0 } \neq 0 $, for some $ A^0 \in \IZ^{s-1}_{\geq 0} $;
	otherwise $ ( \sigma ) $ would not determine the ridge and have to replace $ \tau $ by $ \tau^p $.
	
	We pick such a generator $ F $ and choose $ d_0 \in \IZ_+ $, $ ( d_0, p ) = 1 $, to be maximal for $  F $.
	We put $ \beta := q \cdot (d_0 - 1) $.
	Using Lucas' Theorem (\cite{Lucas} section XXI,
	$ \binom{\sum_{i=1}^\kappa a_i p^i}{\sum_{i=1}^\kappa b_i p^i} \equiv \prod_{i=1}^\kappa  \binom{a_i}{b_i} \mod p $, for $ a_i, b_i \in \{ 0, \ldots, p-1\} $),
	we have
	$$
		\cD_\beta F = 	d_0 \cdot \tau \cdot ( \sum_{A^0} C_{A^0,d_0} \, \sigma_+^{A^0} ) + \sum_{A} C_{A,d_0 - 1}\, \sigma_+^{A} \,,
	$$ 
	and there is at least one coefficient $ C_{A^0,d_0} \neq 0 $.
	We pick one of these $ A^0 $ and apply the Hasse-Schmidt derivation $ \cD_0 := \frac{\partial}{\partial (Y_2 \cdots Y_s)^{q\ast A^0}} $, where $ (Y_2 \cdots Y_s)^{q\ast A^0} = Y_2^{q_2 A_2^0} \cdots Y_s^{q_s A_s^0} $ for $ A^0 = ( A_2^0, \ldots, A_s^0 ) \in \IZ^{s-1}_{ \geq 0} $.
	Then we obtain that
	$$
		\cD_0 \cD_\beta F = 	d_0 \cdot \tau + \sum_{A} C_{A,d_0 - 1} \cdot \binom{A}{A_0} \cdot \sigma_+^{A - A^0} \,,
	$$ 
	which is homogeneous of order $ q $.
	Since $ d_0 $ is invertible in $ k $, we can define $ s_1 := {d_0}^{-1}\cdot \sum_{A} C_{A,d_0 - 1} \cdot \binom{A}{A_0} \cdot \sigma_+^{A - A^0} $ and get 
	$$
		 {d_0}^{-1}\cdot  \cD_0 \cD_\beta F =  \tau + s_1 \,.
	$$
	Therefore, we have $ (I,b ) \sim ( I, b ) \cap  ( \tau + s_1 , q ) $.
	Using Hasse-Schmidt derivations and Facts \ref{Facts:Id_Exponents}, we can obtain that $ s_+ $ is an additive polynomial.
	Without loss of generality we may assume $ s_+ = 0 $.
	(If this is not the case, we put $ \tau^{new} := \tau + s_+ $ and replace $ \tau $ by $ \tau^{new} $; at the end this will not change the result and  this still defines the ridge).
	Therefore we have 
	\begin{equation}
	\label{eq:first_step_proof}
		\IT(\IE) = (I,b) \sim (I,b) \cap (\tau, q) = (I,b) \cap (\sigma_1, q_1).
	\end{equation}
	
	In the next step, we repeat the previous for $ \tau = \sigma_2 $.
	This provides 
	$$
		\IT(\IE) \sim  (I,b) \cap (\sigma_1, q_1) \cap (\sigma_2, q_2 ).
	$$
	Note that we possibly modify $ \sigma_2 $ to get $ s_2 = 0 $ (analogous notation to above).
	Facts \ref{Facts:Id_Exponents} (1) and \eqref{eq:first_step_proof} allow us to keep the new system $ (\sigma ) $ triangular.
	We go on and eventually get
	$$
		\IT(\IE) \, \sim \, (I,b) \cap \bigcap_{i=1}^s ( \sigma_i , q_i) \, \sim \, \bigcap_{i=1}^s ( \sigma_i , q_i) = \Rid ( \IE ) \,,
	$$
	where the second equivalence follows by Facts \ref{Facts:Id_Exponents}(1) and (2) and the assumption that $ ( \sigma )  $ defines the ridge of $ \IE $.
	This shows the first assertion of the proposition and the second is an immediate consequence.
\end{proof}

\begin{Rk}		
	Originally, we used in our proof differential operators of the form $ \frac{\partial}{\partial \tau } $ for some additive polynomial $ \tau $.
	A more general variant of these were introduced in Dietel's thesis \cite{BernhardThesis},
	for the special case mentioned see loc.~cit.~Example (3.3.2).
	Although these are different from Hasse-Schmidt derivations in general, they coincide under favorable conditions (e.g.~if $ (\sigma) $ are in triangular form). 
\end{Rk}

\smallskip 

Let us recall condition $ \boldsymbol{(\cD)} $:
Let $ \IE$ be a pair on $ R $
and let $ ( u, y  ) = (u_1, \ldots, u_e; y_1, \ldots, y_r ) $ be a regular system of parameters for $ R $ such that $ ( y ) $ defines the directrix of $ \IE $.
Denote by $ (U, Y ) $ the corresponding elements in the graded ring $ gr_M(R) \cong k [U,Y] $.
We say condition $\eqref{condD}$
holds for $ ( \IE,  R ) $ 
if the Hasse-Schmidt derivatives $ \dell{Y^M} : gr_M(R) \to gr_M(R) $
lift to Hasse-Schmidt derivatives $ \dell{y^M} : R \to R $ on $ R $ with $ |M| < b $. 

\medskip

\begin{Thm}
\label{Thm:Reduction}
			Let $ R $ be a regular local ring with maximal ideal $ M $ and residue characteristic $ p = \car{R/M} \geq 0 $.
			Let $ \IE_\sim $ be an idealistic exponent on $ R $
			 represented by a pair $ \IE = ( J, b ) $.
			 Let $ ( u, y ) $ be a \RSP for $ R $ such that the system $ ( y) $ determines the directrix of $ \IE $.

			Suppose that $ \Sing( \IE) \neq \varnothing $ and that condition \eqref{condD} holds for $ (\IE, R) $.
			Then we have that
			\begin{equation}
			\label{eq:equiv_IE_IG_Coef}
			\IE \sim \IG \cap \ID_+,
			\ \ \
			\mbox{ 	where }
			\end{equation}  
			\begin{enumerate}
				\item 
				$ \IG = \bigcap\limits_{i=1}^s ( g_i, q_i ) $,
				for some elements $ (g_1, \ldots, g_s ) $ in $  R $ defining the ridge $ \IRid(\IE) $,
				and
				
				\item 
				$ \ID_+ = (I, c) $, for some ideal $ I \subset R $ and $ c \in \IZ_+ $ with $ \ord_{M} (I) > c $.
			\end{enumerate}	
\end{Thm}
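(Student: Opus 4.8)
The plan is to transport the differential-operator argument of Theorem \ref{Thm:Equiv} from the graded ring $gr_M(R)$ to $R$, using condition \eqref{condD} to lift the Hasse-Schmidt derivatives that appear there. As in the proof of Theorem \ref{Thm:Equiv}, I would first dispose of the case $\car{R/M}=0$, where the reduced ridge coincides with the directrix, all $q_i=1$, and the assertion is the classical Tschirnhausen decomposition; so assume $p>0$. Fix a representative $\IE=(J,b)$, write $I:=In_M(J,b)$, and keep the data $(Y)$ defining the directrix and $(\sigma)=(\sigma_1,\ldots,\sigma_s)$ defining the ridge in triangular form \eqref{eq:triangular}, so that $\ITC(\IE)=(I,b)$ and $\IRid(\IE)=\bigcap_i(\sigma_i,q_i)$.

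First I would construct the elements $g_i$. For each $i$, the proof of Theorem \ref{Thm:Equiv} produces a Hasse-Schmidt derivative $\cD_i=\dell{Y^{B_i}}$ of order $b-q_i$ and an element $F_i\in I$ with $\cD_i F_i=\sigma_i$, which is \eqref{eq:diff_sigma}. Condition \eqref{condD} lifts $\cD_i$ to an operator $\dell{y^{B_i}}$ on $R$; the key point is that its symbol is the graded operator, so that $in_M(\dell{y^{B_i}}f)=\dell{Y^{B_i}}(in_M f)$ whenever the right-hand side is nonzero. Choosing $f^{(i)}\in J$ with $in_M(f^{(i)})=F_i$ and setting $g_i:=\dell{y^{B_i}}f^{(i)}$, I obtain $\ord_M(g_i)=q_i$ and $in_M(g_i)=\sigma_i$, i.e. $g_i\equiv\sigma_i\bmod M^{q_i+1}$; thus $(g)=(g_1,\ldots,g_s)$ defines the ridge and $\IG:=\bigcap_i(g_i,q_i)$ satisfies condition (1). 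The equivalence $\IE\sim\IE\cap\IG$ then follows from the Diff Theorem, Facts \ref{Facts:Id_Exponents}(3): it gives $\IE\sim(\dell{y^{B_i}}J,\,q_i)\cap\IE$, and since $f^{(i)}\in J$ has $\ord_P(f^{(i)})\geq\ord_P(J)\geq b$ on $\Sing(\IE)$, the operator of order $b-q_i$ forces $\Sing(\IE)\subseteq\Sing(g_i,q_i)$, an inclusion preserved under permissible blowing ups; hence $\IE\cap(g_i,q_i)\sim\IE$ for every $i$, and therefore $\IE\sim\IE\cap\IG$.

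It remains to peel off $\IG$ and identify the order-$>1$ remainder $\ID_+$. Because $(\sigma)$ defines the ridge, \eqref{eq:ridge} lets me choose generators $P_1,\ldots,P_l$ of $I$ lying in $k[\sigma]$, each homogeneous of degree $b$ in the $\sigma_j$. Substituting $g_j$ for $\sigma_j$ gives lifts $\tilde P_\alpha:=P_\alpha(g_1,\ldots,g_s)\in R$ with $in_M(\tilde P_\alpha)=P_\alpha$; each $\tilde P_\alpha$ is a combination of $g$-monomials of weighted degree $b$, so by Facts \ref{Facts:Id_Exponents}(1),(2) the pair $(\tilde P_\alpha,b)$ is implied by $\IG$, i.e. $\IG\cap(\tilde P_\alpha,b)\sim\IG$. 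Picking $\hat P_\alpha\in J$ with $in_M(\hat P_\alpha)=P_\alpha$ and setting $r_\alpha:=\hat P_\alpha-\tilde P_\alpha$ (so $\ord_M(r_\alpha)>b$), together with the standard-basis generators of $J$ of order $>b$, I let $I_+$ be the ideal they generate and put $\ID_+:=(I_+,b)$; by construction $\ord_M(I_+)>b$, giving condition (2). Finally I would verify $\IE\cap\IG\sim\IG\cap\ID_+$: modulo $\IG$ each order-$b$ generator of $J$ agrees with a $g$-monomial combination up to a term in $I_+$, so the constraints imposed by $\IE$ beyond those of $\IG$ are exactly those of $\ID_+$; combined with $\IE\sim\IE\cap\IG$ this yields $\IE\sim\IG\cap\ID_+$.

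I expect the main obstacle to be this last equivalence $\IE\cap\IG\sim\IG\cap\ID_+$, i.e. making precise that the $g$-monomial parts of $J$ are genuinely absorbed by $\IG$ while only an order-$>1$ remainder survives, and checking that this decomposition is stable under \emph{all} permissible local blowing ups over every $R[t]$, which is what the definition of equivalence demands; here the Numerical Exponent Theorem \ref{Thm_Num_Exp} and Facts \ref{Facts:Id_Exponents}(1),(2) do the bookkeeping. A second delicate point is the symbol compatibility of the lifted Hasse-Schmidt operators in the second step --- that the lift provided by \eqref{condD} really induces $\dell{Y^{B_i}}$ on $gr_M(R)$, so that $in_M(g_i)=\sigma_i$ --- since the construction of the $g_i$, and hence the entire decomposition, rests on it.
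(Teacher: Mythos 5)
Your proposal is correct and takes essentially the same route as the paper: the published proof likewise lifts the Hasse-Schmidt derivatives used in Theorem \ref{Thm:Equiv} to $ R $ via condition \eqref{condD} to obtain $ \IE \sim \IG \cap \IE $, and then applies Facts \ref{Facts:Id_Exponents}(1) and (2) to eliminate the initial parts, arriving at $ \IG \cap \ID_+ $. Your explicit construction of the $ g_i $ (with $ in_M(g_i) = \sigma_i $) and of the remainder ideal via the relations in $ k[\sigma] $ merely fills in the details the paper compresses into ``analogous arguments.''
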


\noindent 
Note that (1) implies that $ q_i = p^{d_i} $, for $ d_i \in \IZ_\gqz $ and we may suppose $ d_1 \leq d_2 \leq \ldots \leq d_s $.

\begin{proof}
	In Theorem \ref{Thm:Equiv} we showed $ \IRid( \IE) \sim \ITC(\IE ) $. 
	In the proof we only used Hasse-Schmidt derivatives \wrt $ (Y) $.
	Since \eqref{condD} holds for $ ( \IE, R ) $, we can apply the corresponding Hasse-Schmidt derivatives \wrt $ (y) $ on $ \IE = ( J, b ) $ in $ R $ and obtain using the analogous arguments:
	$$
		\IE \,\sim\, \IG \cap \IE \,\sim\, \IG \cap \ID_+\,,
	$$
	for $ \IG $ and $ \ID_+ $ as in the assertion,
	where we use Facts \ref{Facts:Id_Exponents}(1) and (2) for the second equivalence in order to eliminate the terms corresponding to the initial part in $ \IE $.
\end{proof}

It is worth to mention that the elements $ g_i $ are of the form $ g_i = a_i + h_i $, where $ a_i $ is an additive homogeneous polynomial of degree $ q_i = p^{d_i } $ 
and $ h_i $ are some terms of higher order or zero.

\smallskip

Using the previous result, we can give the following criterion with which we can test if it is possible to reduce the order lowering problem to one in lower dimensions.

\begin{Proc}
\label{CharacterizationProcedure}
	Let the situation and notations be as in Theorem \ref{Thm:Reduction}.
	We have 
	$$ 
		\IE \, \sim \, ( g_1, p^{d_1} ) \cap ( g_2, p^{d_2} ) \cap \ldots ( g_s, p^{d_s} ) \cap \ID_+\,.
	$$
	We apply Facts \ref{Facts:Id_Exponents}(1) for every $ g_i $ which is a $ p $-power and reduce the assigned numbers as much as possible.
	We obtain that
	$$ 
		\IE \, \sim \, ( f_1, p^{c_1} ) \cap ( f_2, p^{c_2} ) \cap \ldots ( f_s, p^{c_s} ) \cap \ID_+ \,,
	$$
	where none of the $ f_i $ is a $ p $-power and after a possible reordering we may assume $ c_1 \leq \ldots \leq c_s $.
	We have the following cases:

	\begin{enumerate}
	\item	
		If $ c_1 > 0 $ then there is no reduction to a (local) resolution problem of lower dimension possible.
		Here we mean a reduction as in the proof for resolution of singularities in characteristic zero.
	
	\medskip

	\item
		If $ c_1 = \ldots = c_t = 0 $, for some $ t \geq 1 $, then without loss of generality $ f_i = y_i $, for $ 1 \leq i \leq t $ and $ V ( y_1, \ldots, y_t ) $ has maximal contact with $ \IE $.
		By Facts \ref{Facts:Id_Exponents}(5), we get that
		$$ 
			\IE \sim ( \langle y_1, \ldots, y_t \rangle, 1 ) \cap \ID (\IE ; u, y_{t+1}, \ldots, y_r;  y_1, \ldots, y_t ) 
		$$
		and the (local) resolution problem reduces to one on $ V (y_1, \ldots, y_t) $. 
		Set $ \ID := \ID (\IE ; u, y_{t+1}, \ldots, y_r;  y_1, \ldots, y_t )  $.
		We have:
		
		\medskip
		\begin{enumerate} 
			\item	If there exists a resolution for $ \ID $ (e.g., if the resolution problem for $ \ID$ corresponds to one in dimension two \cite{CJS}, or if $ \ID $ is defined by binomials \cite{BMbinomial,Blanco1,Blanco2}), then 
			we can achieve an improvement of the singularity.
			More precisely, we can make $ \iota_X $ decrease below $ \iota_X(x_0)_{t+\half } $,
			where $ x_0 $ is the closed point corresponding to $ M $.
			
				\medskip

			\item	Suppose we are not in case (a) and $ t = s $.
					Then we repeat the reduction procedure for the companion pair associated to $ \ID $
					(cf.~Construction \ref{Constr.} and Examples
					\ref{Ex:moresteps1} and \ref{Ex:Not_full_Reso}).

				\medskip
			
			\item 		Suppose we are not in case (a) and $ t < s $.
					Then there is no  further reduction (in the sense of characteristic zero) to a resolution problem in lower dimension possible.
		\end{enumerate}
\end{enumerate}
	
\end{Proc}

\smallskip

Of course, this is not a deep criterion and certainly known to experts.
Nevertheless this is more than just saying ``apply Hironaka's procedure for characteristic zero and see if it works" and hopefully provides an easier access for non-experts who want to determine explicitly a resolution of singularity for special cases in positive or mixed characteristic. 

\smallskip 

As one can see from the above, one of the most interesting singularities to study from the viewpoint of resolution of singularities are hypersurfaces given by some element of the form
$$
	y^{p^e} + h(u_1, \ldots, u_n , y) 
	\ \ 
	\mbox{ with }
	\ h \in \langle u, y \rangle^{p^e + 1}.
$$

For $ e = 1 $, $ n = 3 $ and $ h(u,y) $ either of the form $ h(u,y) = f(u) $ or $ h(u,y) = g^{p-1} y + f(u) $, an explicit resolution of singularities is constructed by Cossart and Piltant \cite{CP2} (over differentially finite fields) and \cite{CPmixed2} (arithmetic case).
Furthermore, they deduce from this the existence of a global birational resolution of singularities in dimension three
(see also \cite{CP1} resp.~\cite{CPmixed} for the reduction to these cases).

For $ n = 3 $, and $ e = 2 $, or $ e = 1 $ and $ h $ not of the above form, the construction of an embedded resolution of singularities for the corresponding hypersurfaces via blowing ups in regular centers remains a difficult open problem. 

%
%
%
%
%
%
%
%
%
%
%
%
%
%

\medskip

\section{Embedded resolution of singularities for determinantal varieties}

In the final section we discuss how the results of this article provide the ground for the explicit construction of a resolution of singularities for varieties defined by minors of fixed size of a generic matrix.
For an introduction to determinantal varieties, see for example the book by Bruns and Vetter \cite{BrunsVetter} or Harris' book \cite{Harris}, Lecture 9. 
We also refer to \cite{DetermResol} and \cite{DetVarI, DetVarII} for the discussion of a resolution of singularities for these.


\begin{Def} 
Fix positive integers $ m, n , r \in \IZ_+ $ such that $ r \leq m \leq n $.
Let $ R_0 $ be a regular ring (e.g.~$ R_0 $ a field) and
let $ S = R_0 [x_{i,j} | 1 \leq i \leq m, 1 \leq j \leq n ] $ be the polynomial ring over $ R_0 $ with $ m\cdot n $ independent variables.
Set $ Z := \Spec ( S ) $ and $ M := (x_{i,j})_{i,j} $ be the generic $ m \times n $ matrix defined by the variables. 
The {\em generic determinantal variety} 
$$ X_{m,n,r} = V ( \mathcal{J}_{m,n,r} ) $$ 
is defined as the subvariety of $ Z $ given by the $ r \times r $ minors of $ M $. 

For subsets $ I \subset \{ 1, \ldots, m \} $ and $ J \subset \{ 1, \ldots, n \} $ with $ \#I = \#J = r $, we define
\begin{equation}
\label{eq:f_{I,J}}
	f_{I,J} := \det ( M_{I,J}),
\end{equation}
where $ M_{I,J} $ denotes the $ r \times r $ submatrix of $ M $ determined by $ I $ and $ J $.
By definition,
$$
	\mathcal{J}_{m,n,r} = \langle f_{I,J} \mid 
	I\subset \{ 1, \ldots, m \}, 
	J \subset \{ 1, \ldots, n \} : 
	\#I = \#J = r  \rangle .	
$$
\end{Def} 

\smallskip 

Clearly, each $ f_{I,J} $ is homogeneous of degree $ r $, 
every appearing monomial is a product of $ r $ different variables, and every variable $ x_{i,j} $, with $ i \in J $, $ j \in J $, appears in $ f_{I,J} $.
Further, $ ( f_{I,J})_{I,J} $ is a standard basis for $ \mathcal{J}_{m,n,r} $ at the point corresponding to $ \langle x_{i,j} \mid i, j \rangle $ (Definition \ref{Def:standardbasis(2)}),
for this we may choose any total ordering on the index set
$ \{ (I,J) \mid I\subset \{ 1, \ldots, m \}, 
J \subset \{ 1, \ldots, n \} : 
\#I = \#J = r \} $.
Therefore, we associate the following pair to $ X_{m,n,r} $, 
$$
	\IE_{m,n,r} := \bigcap_{I,J} (f_{I,J} , r ) = (\mathcal{J}_{m,n,r}, r).
$$
We have $ \IRid(\IE_{m,n,r}) =  \bigcap_{i=1}^m  \bigcap_{j=1}^n  ( X_{i,j}, 1 ) = \IDir(\IE_{m,n,r}) $ and hence Theorem \ref{Thm:Reduction} provides.

\begin{Lem}
	\label{Lem:above}
	We have that
	$$ 
		\IE_{m,n,r} \sim  \bigcap_{i=1}^m  \bigcap_{j=1}^n  ( x_{i,j}, 1 ) = \IE_{m,n,1}.
	$$
	In particular, $ \Sing ( \IE_{m,n,r} ) = V ( x_{i,j} \mid 1 \leq i \leq m, 1 \leq j \leq n ) $.
	Hence this is the unique permissible center for $ \IE_{m,n,r} $ of maximal dimension and after blowing up the transform of $ \IE_{m,n,r} $ is resolved..
\end{Lem}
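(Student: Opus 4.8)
The plan is to read the equivalence straight off Theorem \ref{Thm:Reduction} and then to argue that the auxiliary pair $\ID_+$ it produces may be discarded. First I would record the input for that theorem. Condition \eqref{condD} holds for $(\IE_{m,n,r},R)$: working in the local ring $R$ of $Z$ at a point of $V(x_{i,j})$ with maximal ideal $M$, the entries $x_{i,j}$ are part of a \RSP and are honest polynomial variables over $R_0$, so the Hasse--Schmidt derivatives $\dell{x^B}$ live on $R_0[x_{i,j}]$ and localize to $R$. By the computation recorded immediately before the statement, $\IDir(\IE_{m,n,r}) = \IRid(\IE_{m,n,r}) = \bigcap_{i,j}(X_{i,j},1)$; in particular the reduced ridge coincides with the directrix and every exponent is $q_i = 1$. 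Theorem \ref{Thm:Reduction} then gives $\IE_{m,n,r}\sim\IG\cap\ID_+$, and since the $X_{i,j}$ have degree one I may take the ridge-defining elements $g_{i,j}$ to be the variables $x_{i,j}$ themselves. Thus $\IG = \bigcap_{i,j}(x_{i,j},1) = \IE_{m,n,1}$, which by Facts \ref{Facts:Id_Exponents}(1) is equivalent to $(M,1)$.

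The remaining and only substantial point is to show $\IG\cap\ID_+\sim\IG$, i.e.\ that $\ID_+ = (I,c)$ is negligible. Here $\ord_M(I) > c$ forces $I\subseteq M^{c+1}$, so $\Sing(\IG) = V(x_{i,j})\subseteq\Sing(\ID_+)$ and hence $\Sing(\IG\cap\ID_+) = \Sing(\IG)$. I would then check that this inclusion persists under every permissible blowing up: if $H$ is the exceptional ideal and $M',I'$ the transforms, then $IR'\subseteq (HM')^{c+1}$ yields $I' = IR'H^{-c}\subseteq {M'}^{c+1}$, so again $\Sing(\IG')\subseteq\Sing(\ID_+')$. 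Consequently $\IG$ and $\IG\cap\ID_+$ have the same singular locus at every stage and admit exactly the same permissible sequences, whence $\IE_{m,n,r}\sim\IE_{m,n,1}$. I expect this absorption step---verifying that $\ID_+$ never enlarges the permissible locus after blowing up---to be the main obstacle, the ridge computation itself being already available.

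Finally, the ``in particular'' follows. By the Numerical Exponent Theorem \ref{Thm_Num_Exp}, $\Sing(\IE_{m,n,r}) = \Sing(\IE_{m,n,1}) = \bigcap_{i,j}V(x_{i,j}) = V(x_{i,j}\mid i,j)$, which is regular and therefore the unique permissible center of maximal dimension. Blowing it up, in each of the standard charts one has $M\cdot R' = H$, so the transform of $(M,1)$ is $(R',1)$ and is resolved; hence so is the transform of $\IE_{m,n,r}$.
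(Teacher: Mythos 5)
Your proof is correct, and its first half is exactly the paper's: compute $\IRid(\IE_{m,n,r}) = \IDir(\IE_{m,n,r}) = \bigcap_{i,j}(X_{i,j},1)$, note that condition \eqref{condD} is free on a polynomial ring, and feed this into Theorem \ref{Thm:Reduction} to get $\IG = \bigcap_{i,j}(x_{i,j},1)$. Where you genuinely diverge is in the absorption step. You keep the full conclusion $\IE_{m,n,r}\sim\IG\cap\ID_+$ and then kill $\ID_+=(I,c)$ by hand, arguing from the definition of equivalence: $I\subseteq M^{c+1}$ gives $\Sing(\IG)\subseteq\Sing(\ID_+)$, and your transform computation $IR'\subseteq (HM')^{c+1}$, hence $I'=IR'H^{-c}\subseteq H\,{M'}^{c+1}\subseteq {M'}^{c+1}$, shows this containment is stable under every permissible local blowing up (and trivially under the $R[t]$-extensions), so the two sides admit the same permissible sequences. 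This is valid. The paper never needs to control $\ID_+$ at all: it uses only the intermediate equivalence $\IE_{m,n,r}\sim\IE_{m,n,r}\cap\IG$ from the proof of Theorem \ref{Thm:Reduction} and absorbs $\IE_{m,n,r}$ itself into $\IG$, since every $r\times r$ minor lies in $M^r$, so that by Facts \ref{Facts:Id_Exponents}(1) one has $\IE_{m,n,r}\cap\IG\sim(\mathcal{J}_{m,n,r}+M^{r},\,r)=(M^{r},r)\sim(M,1)\sim\IG$. So the paper's absorption is a one-line ideal computation exploiting the homogeneity of the minors, while yours is longer but buys a reusable general lemma: any tail $(I,c)$ with $I\subseteq M^{c+1}$ can be discarded from an intersection with $(M,1)$, stably under transforms --- precisely the step you correctly flagged as the main obstacle. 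One small imprecision: Theorem \ref{Thm:Reduction} only guarantees $g_{i,j}\equiv x_{i,j}\bmod M^{2}$, not $g_{i,j}=x_{i,j}$; but any such system generates $M$ in the local ring (Nakayama), so $\IG\sim(M,1)$ either way, and the paper makes the same identification. Your handling of the ``in particular'' claims (singular locus via Theorem \ref{Thm_Num_Exp}, resolution after one blowing up since $M\cdot R'=H$) matches the paper, which treats them as immediate consequences.
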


\begin{proof}
	Since $ \IRid(\IE_{m,n,r}) =  \bigcap_{i=1}^m  \bigcap_{j=1}^n  ( X_{i,j}, 1 ) $, we get (using the notation of Theorem \ref{Thm:Reduction})
	$$
		\IG = \bigcap_{i=1}^m  \bigcap_{j=1}^n  ( x_{i,j}, 1 )
		\ \ \mbox{ and } \ \
		\IE_{m,n,r} \sim \IE_{m,n,r} \cap \IG.
	$$
	The desired equivalence follows by Facts \ref{Facts:Id_Exponents}(1) and (2).	
	The remaining parts are immediate consequences of the equivalence.
\end{proof}

\begin{Obs}
	\label{Obs:n,n,n}
	Let us have a look at the case $ m = n = r $.
	Then $ M $ is a square matrix and $ X_{n,n,n} $ is defined by $ f := \det(M) $.
	We blow up with center $ V ( x_{i,j} \mid 1 \leq i, j \leq n ) $.
	In the $ X_{1,1} $-chart the strict transform $ f' $ of $ f $ is given by the following determinant
	$$
	\det \left(
	\begin{matrix}
	1	& x'_{1,2} & \cdots & x'_{1,n} \\
	x'_{2,1} & x'_{2,2} & \cdots & x'_{2,n}\\
	\vdots & \vdots & \ddots & \vdots \\
	x'_{n,1} & x'_{n,2} & \cdots & x'_{n,n}
	\end{matrix}
	\right)
	=
	\det \left(
	\begin{matrix}
	1	& x'_{1,2} & \cdots & x'_{1,n} \\
	0 & x'_{2,2} - x'_{2,1} x'_{1,2}  & \cdots & x'_{2,n} - x'_{2,1}x'_{1,n}  \\
	\vdots & \vdots & \ddots & \vdots \\
	0 & x'_{n,2} -x'_{n,1} x'_{1,2}    & \cdots & x'_{n,n}- x'_{n,1}x'_{1,n}
	\end{matrix}
	\right)
	=
	$$ 
	$$
	=
	\det \left(
	\begin{matrix}
	y_{2,2}   & \cdots & y_{2,n}   \\
	\vdots & \ddots & \vdots \\
	y_{n,2}    & \cdots & y_{n,n}
	\end{matrix}
	\right) \,,
	\mbox{ where } y_{i,j} := x'_{i,j} - x'_{i,1} x'_{1,j} \,, \mbox{ for } 2 \leq i, j \leq n \,.
	$$ 
	(Note that the situation in the other charts is analogous).
	As we see, $ f' $ is now defined by a generic $ ( n - 1) \times ( n - 1 ) $ matrix.
	In other words, the strict transform $ X'_{n,n,n} $ of $ X_{n,n,n} $ is isomorphic to $ X_{n-1,n-1,n-1} $.
	The previous lemma provides that the next center in this chart is $ V ( y_{i,j} \mid 2 \leq i, j \leq n ) $.

	We obtain that after $ n - 1 $ blowing ups following this procedure the strict transform in each chart will be resolved since it is isomorphic to $ X_{1,1,1} $, i.e., it is given by a generic $ 1 \times 1 $ matrix.	
But we have to convince us that this yields a global resolution. 
Namely, we have to show that the centers of the blowing ups in each chart glue together to a global center.

By definition, $  y_{i,j} = x'_{i,j} - x'_{i,1} x'_{1,j} $ and hence it is the strict transform of $ x_{1,1} x_{i,j} - x_{i,1} x_{1,j} $.
More generally, $ \langle y_{i,j} \mid 2 \leq i, j \leq n  \rangle $ is the strict transform of the ideal defined by the $ 2 \times 2 $ minors of $ M $,
$ 
	\langle x_{a,b} x_{i,j} - x_{i,b} x_{a,j} \mid 1 \leq a,b,i, j \leq n  \rangle = \mathcal{J}_{n,n,2} .
$
Note that, for $ i \neq 1, j \neq 1, b \neq 1 $ we have in the $ X_{1,1} $-chart
$$
	x'_{1,b} x'_{i,j} - x'_{i,b} x'_{1,j} 
	=  x'_{1,b} (y_{i,j} + x'_{i,1} x'_{1,j}) -  (y_{i,b} + x'_{i,1} x'_{1,b}) x'_{1,j} 
	= x'_{1,b} y_{i,j}  -  y_{i,b} x'_{1,j}
$$
and hence this vanishes in $ V ( y_{i,j} \mid 2 \leq i, j \leq n ) $.

This implies that the upcoming centers after the first blowing up globally glue together and hence define a global center. 
By iterating this argument we obtain that at every step of the described procedure the centers glue together and hence define a global center.
In fact, the center of the $ i $-th blowing up is the strict transform of $ X_{n,n,i} $ which is defined by the $ i \times i $ minors of $ M $. 
This shows that we get a global resolution of the singularities of $ X_{n,n,n} $.
\end{Obs}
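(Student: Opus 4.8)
The plan is to prove the claim by an explicit computation in the affine charts of each blowing up, combined with a gluing argument, organized as a descending induction on the size of the matrix.

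First I would carry out the chart computation for the blowing up of the origin $V(x_{i,j}\mid 1\le i,j\le n)$. In the $X_{1,1}$-chart one has $x_{1,1}=x'_{1,1}$ and $x_{i,j}=x'_{1,1}x'_{i,j}$ for $(i,j)\neq(1,1)$, so factoring $x'_{1,1}$ out of each of the $n$ rows of $M$ shows that the total transform of $f=\det(M)$ equals $(x'_{1,1})^{n}$ times the displayed determinant; since the exceptional divisor is $V(x'_{1,1})$ and the latter determinant is not divisible by $x'_{1,1}$, the strict transform $f'$ is precisely that determinant. Clearing the first column by the row operations indicated then gives $f'=\det(y_{i,j})_{2\le i,j\le n}$ with $y_{i,j}:=x'_{i,j}-x'_{i,1}x'_{1,j}$. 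The substitution $x'_{i,j}=y_{i,j}+x'_{i,1}x'_{1,j}$ is triangular, hence an automorphism of the local ring, so the $y_{i,j}$ together with $x'_{1,1}$, the $x'_{1,j}$ and the $x'_{i,1}$ form a regular system of parameters. Thus $f'$ is a generic $(n-1)\times(n-1)$ determinant in the $y_{i,j}$, and $X'_{n,n,n}$ is locally isomorphic to $X_{n-1,n-1,n-1}$ (with $2n-1$ further free coordinates). Applying Lemma \ref{Lem:above} to the generic $(n-1)\times(n-1)$ determinant in the $y_{i,j}$ identifies $V(y_{i,j}\mid 2\le i,j\le n)$ as the unique maximal-dimensional permissible center in this chart.

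The key step is then the gluing: I would show that $\langle y_{i,j}\mid 2\le i,j\le n\rangle$ is exactly the strict transform of $\mathcal{J}_{n,n,2}$, the ideal of $2\times 2$ minors. On one hand each $y_{i,j}$ is the strict transform of the minor $x_{1,1}x_{i,j}-x_{i,1}x_{1,j}$; on the other, the identity $x'_{1,b}x'_{i,j}-x'_{i,b}x'_{1,j}=x'_{1,b}y_{i,j}-y_{i,b}x'_{1,j}$ exhibits every $2\times 2$ minor, after removing its exceptional factor, as a combination of the $y_{i,j}$ over the chart's coordinate ring, so the two ideals agree in the $X_{1,1}$-chart. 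Because the strict transform of a globally defined ideal is intrinsic, these local descriptions agree on overlaps; the computation in the remaining charts is obtained by permuting the role of the pivot $(1,1)$ and yields the same global center. Hence the local centers glue to the strict transform of $X_{n,n,2}$.

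Finally I would iterate. After the first blowing up each chart carries a generic square determinant of size $n-1$ in the $y_{i,j}$, so the steps above apply verbatim with $n$ replaced by $n-1$; at the $\ell$-th stage the same minor-pullback identity shows the center is the strict transform of $X_{n,n,\ell}$, defined by the $\ell\times\ell$ minors, and is therefore global. After $n-1$ blowing ups every chart's strict transform is a generic $1\times 1$ matrix, hence regular, which gives the asserted global embedded resolution of $X_{n,n,n}$. The analytically routine part is the determinant reduction; the genuine obstacle is the gluing of the second step, namely verifying that the locally defined centers $V(y_{i,j})$ are chart-independent and coincide globally with the strict transform of $\mathcal{J}_{n,n,2}$, so that the inductive size-drop is truly global rather than merely chart-local.
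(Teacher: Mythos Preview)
Your proposal is correct and follows essentially the same route as the paper's own argument: the determinant reduction via row operations in the $X_{1,1}$-chart, the identification of the new center $V(y_{i,j})$ via Lemma~\ref{Lem:above}, the gluing step through the identity $x'_{1,b}x'_{i,j}-x'_{i,b}x'_{1,j}=x'_{1,b}y_{i,j}-y_{i,b}x'_{1,j}$ showing the local center is the strict transform of $\mathcal{J}_{n,n,2}$, and the iteration are all exactly as in the observation. You add a couple of helpful elaborations the paper leaves implicit (the explicit factoring $f\cdot R'=(x'_{1,1})^{n}f'$ and the remark that the triangular substitution $x'_{i,j}\mapsto y_{i,j}+x'_{i,1}x'_{1,j}$ is an automorphism, so the $y_{i,j}$ are genuinely independent parameters), but these do not change the approach.
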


It turns out that these methods apply in the case for arbitrary $ r \leq m \leq n $, see below.
Therefore, using the techniques developed in this article, we obtain a slight generalization of Vainsencher's result \cite{DetermResol}, who obtains a resolution via blowing up determinantal ideals if $ R $ contains an algebraically closed field $ k $.
(Indeed, his centers of the blowing ups are defined by the same ideals as our's).

\begin{Thm}
	\label{Thm:ResDetText}
	Let $ m , n, r \in \IZ_+ $ with $ r \leq m \leq n $, let $ R_0 $ be a regular ring, and let $ M = (x_{i,j})_{i,j} $ be a generic $ m \times n $ matrix.
	The following sequence of blowing ups provides an embedded resolution of singularities for the generic determinantal variety $ X_{m,n,r} \subset Z = \Spec(R_0[x_{i,j}]) $,
	$$ 
	Z =: Z_0 \stackrel{\pi_1}{\longleftarrow} Z_1
	\stackrel{\pi_2}{\longleftarrow} 
	\ldots 
	\stackrel{\pi_{r-1}}{\longleftarrow} Z_{r-1}, 
	$$  
	where $ \pi_\ell $ is the blowing up with center the strict transform of $ X_{m,n,\ell} $ in $ Z_{\ell-1} $, $ 1 \leq \ell \leq r - 1 $.
\end{Thm}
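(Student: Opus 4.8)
The plan is to prove the theorem by induction on $r$, using the local computation of Observation \ref{Obs:n,n,n} in its natural generalization to non-square matrices and non-maximal $r$. The base case $r=1$ is trivial: $X_{m,n,1}=V(x_{i,j}\mid i,j)$ is a linear subspace, hence already regular, and no blowing up is required (here $r-1=0$). For the inductive step I would first invoke Lemma \ref{Lem:above} to record that $\Sing(\IE_{m,n,r})=V(x_{i,j}\mid i,j)=X_{m,n,1}$ is the unique permissible center of maximal dimension; this forces $\pi_1$ to be the blowing up with center $X_{m,n,1}$ and guarantees that this center is regular and contained in the singular locus.

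The heart of the argument is the local model. In the $X_{a,b}$-chart of $\pi_1$, after factoring the exceptional divisor $V(x_{a,b})$ out of each $r\times r$ minor and performing the unipotent coordinate change $y_{i,j}:=x'_{i,j}-x'_{i,b}x'_{a,j}$ (the evident generalization of the substitution in Observation \ref{Obs:n,n,n}), the matrix $M$ becomes, up to the unit $x_{a,b}$ and invertible row and column operations, a block matrix with a single pivot row and column and lower-right block $Y:=(y_{i,j})$. Hence away from the exceptional divisor $\operatorname{rank}(M)=1+\operatorname{rank}(Y)$, so the strict transform of $X_{m,n,r}$ in this chart is cut out by the $(r-1)\times(r-1)$ minors of the generic $(m-1)\times(n-1)$ matrix $Y$; that is, it is isomorphic to $X_{m-1,n-1,r-1}\times\IA^{\,m+n-1}$, the extra coordinates being $x_{a,b}$ together with the pivot row and column. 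Since this computation uses only determinantal identities with integer coefficients, it is valid over the arbitrary regular base ring $R_0$, which is exactly what yields the generalization of Vainsencher's theorem. By the inductive hypothesis $X_{m-1,n-1,r-1}$ is resolved by blowing up the strict transforms of $X_{m-1,n-1,1},\ldots,X_{m-1,n-1,r-2}$; under the chart isomorphism these correspond to the strict transforms of $X_{m,n,2},\ldots,X_{m,n,r-1}$, so $\pi_1$ followed by these $r-2$ blowing ups is precisely the asserted sequence, of total length $r-1$, and the final strict transform is locally $X_{m-r+1,n-r+1,1}$, a regular linear space.

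To promote this local picture to a global statement I would, following the gluing argument of Observation \ref{Obs:n,n,n}, check that the local centers patch: the ideal $\langle y_{i,j}\rangle$ in the $X_{a,b}$-chart is the strict transform of $\mathcal{J}_{m,n,2}$, and more generally the $\ell\times\ell$ minors of $Y$ are the strict transforms of the $(\ell+1)\times(\ell+1)$ minors of $M$. Hence the locally defined centers are the restrictions of the globally defined strict transform of $X_{m,n,\ell+1}$. This simultaneously identifies each center with the global strict transform of $X_{m,n,\ell}$ (as claimed in the theorem) and, by the recursion, shows that it is regular (locally a linear space) and contained in the singular locus of the current transform of $X_{m,n,r}$, hence permissible and giving an isomorphism away from $\Sing(X_{m,n,r})$, which is condition (2) of Conjecture \ref{Conj:ERS}.

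It remains to verify the normal-crossings conditions, and this is where I expect the \emph{main obstacle} to lie. Condition (1) of Conjecture \ref{Conj:ERS} is already in hand, since the final strict transform is $X_{m-r+1,n-r+1,1}$, which is regular. For condition (3) I would track the exceptional divisors through the recursion: in each chart the $\ell$-th exceptional divisor is a coordinate hyperplane in the variables complementary to the $y_{i,j}$ that define the strict transform, so transversality and the simple normal crossing property can in principle be read off chart by chart. The delicate point is to carry out this bookkeeping coherently across all charts and all $r-1$ stages at once, confirming that the successive exceptional divisors continue to meet transversally, that they meet the strict transform $X_{m-r+1,n-r+1,1}$ transversally, and that the strict transforms of the $X_{m,n,\ell}$ remain regular globally rather than merely in each chart. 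This transversality and global-regularity bookkeeping, rather than the rank computation itself, is where the real work will be.
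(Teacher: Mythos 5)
Your proposal is correct and follows the same skeleton as the paper's proof --- induction on $r$, the same chart substitution $y_{i,j}=x'_{i,j}-x'_{i,j_1}x'_{i_1,j}$, the identification of the chart-local strict transform with a smaller generic determinantal variety, and the same gluing argument as in Observation \ref{Obs:n,n,n} --- but you justify the key local claim by a genuinely different mechanism. The paper records that the minors $(f_{I,J})$ form a standard basis for $\mathcal{J}_{m,n,r}$ precisely so that it can invoke Hironaka's theorem that the strict transforms of a standard basis generate the strict transform of the ideal; it then verifies by hand, with explicit column operations and cofactor expansion, that the strict transforms $f'_{I,J}$ with $i_1\notin I$ or $j_1\notin J$ already lie in the ideal $\mathcal{I}$ generated by those with $i_1\in I$, $j_1\in J$. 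You instead reduce the pulled-back matrix to $\bigl(\begin{smallmatrix}1&0\\0&Y\end{smallmatrix}\bigr)$ by unipotent row and column operations and use invariance of ideals of minors under invertible operations, which identifies the full ideal of $r$-minors with $I_{r-1}(Y)$ in one stroke and eliminates the paper's case analysis entirely --- a genuine simplification. What your rank-locus phrasing buys you less cheaply is the \emph{scheme-theoretic} strict transform: describing it ``away from the exceptional divisor'' and taking closure requires one further remark, namely that $I_{r-1}(Y)$ is already saturated with respect to the exceptional equation $x'_{i_1,j_1}$; this is immediate from your own product decomposition, since the chart ring is a polynomial ring over $R_0[y]/I_{r-1}(Y)$ in $x'_{i_1,j_1}$ and the pivot row and column variables, so $x'_{i_1,j_1}$ is a nonzerodivisor modulo $I_{r-1}(Y)$ --- but it should be said, as it is exactly the point the paper's standard-basis route settles automatically. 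Finally, concerning your worry that the simple normal crossing condition (3) of Conjecture \ref{Conj:ERS} is ``where the real work will be'': the paper's proof does not carry out that bookkeeping either --- it concludes once the chart computation and the globality of the centers are established --- so you are not missing a step the paper supplies; if anything, your explicit splitting of each chart as $X_{m-1,n-1,r-1}\times\IA^{\,m+n-1}$, in which every new exceptional divisor is a coordinate hyperplane transverse to the $y$-block, is the natural starting point for making that verification precise.
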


\begin{proof}
	We prove the result by induction on $ r $.
	For $ r = 1 $, there is nothing to show since $ X_{m,n,1} $ is regular.
	Suppose that $ r > 1 $ and that the statement holds true for $ X_{m,n,i} $ if $ i < r $.
	In particular, we obtain that each center of the above sequence of blowing ups is regular.
	
	Let $ i_1 \in \{ 1, \ldots, m \} $ and $ j_1 \in \{ 1, \ldots, n \} $.
	Let $ N $ be the $ (m-1)\times (n-1) $ matrix that we obtain after eliminating the $ i_1 $-row and the $ j_1 $-th column in $ M $, and then replacing 
	$ x_{i,j} $ by
	$$ 
		y_{i,j} :=  x'_{i,j} - x'_{i,j_1} x'_{i_1,j} ,
		\ \ \ 
	\mbox{ for } i \neq i_1 \mbox{ and } j \neq j_1 .
	$$
	
	We claim that the strict transform of $ X_{m,n,r} $ in the chart $ X_{i_1,j_1} \neq 0 $ coincides with the generic determinantal variety $ Y_{m-1,n-1, r-1} $ that is defined using the matrix $ N $.
	By induction, we obtain then that the above sequence resolves $ X_{m,n,r} $ and since the blowing ups are defined globally (cf.~Observation \ref{Obs:n,n,n}), the result follows.
	
	We prove the claim:
	Let $ ( f_{I,J} )_{(I,J)} $ be the standard basis for $ \mathcal{J}_{m,n,r} $ defined in \eqref{eq:f_{I,J}}.
	(Recall that the strict transform of a standard basis generates the strict transform of the corresponding ideal under a permissible blowing up).
	If $ i_1 \in I $ and $ j_1 \in J $, then the same computation as at the beginning of Observation \ref{Obs:n,n,n} applied for $ f_{I,J} $ provides that 
	$$
		\mathcal{I} :=  \langle f_{I,J}' \mid i_1 \in I, j_1 \in J \rangle 
	$$ 
	coincides with the ideal defining $ Y_{m-1, n-1, r- 1} $.
	By Lemma \ref{Lem:above}, $ \Sing(Y_{m-1,n-1,r-1}) = Y_{m-1,n-1,1} $ and the same arguments as in Observation \ref{Obs:n,n,n} provide that this is the transform $ X_{m,n,2}' $ of the $ 2 \times 2 $ minors of $ M $ in the present chart. 
	
	Hence it remains to show that $ f_{I,J}' \in \mathcal{I} $ if $ i_1 \notin I $ or $ j_1 \notin J $.
	Suppose $ i_1 \notin I $ and $ j_1 \in J $.
	Without loss of generality, $ j_1 $ is the first column of $ M_{I,J} $ (interchanging columns only modifies the sign of the determinant).
	Let $ I = \{ k_1, \ldots, k_r \} $ and $ J = \{ j_1, \ell_2, \ldots, \ell_r \} $.
	Using the definition of $ y_{i,j} $, we get that
		$$
		f_{I,J}' =
		\det \left(
		\begin{matrix}
		x'_{k_1,j_1} & x'_{k_1,\ell_2} & \cdots & x'_{k_1,\ell_r} \\
		x'_{k_2,j_1} & x'_{k_2,\ell_2} & \cdots & x'_{k_2,\ell_r}\\
		\vdots & \vdots & \ddots & \vdots \\
		x'_{k_r,j_1} & x'_{k_r,\ell_2} & \cdots & x'_{k_r,\ell_r}
		\end{matrix}
		\right)
		=
		$$
		
		\medskip
		
		$$
		=
		\det \left(
		\begin{matrix}
		x'_{k_1,j_1} & y_{k_1,\ell_2} + x'_{k_1,j_1} x'_{i_1,\ell_2} & \cdots &  y_{k_1,\ell_r} + x'_{k_1,j_1} x'_{i_1,\ell_r} \\
		x'_{k_2,j_1} & y_{k_2,\ell_2} + x'_{k_2,j_1} x'_{i_1,\ell_2} & \cdots &  y_{k_2,\ell_r} + x'_{k_2,j_1} x'_{i_1,\ell_r} \\
		\vdots & \vdots & \ddots & \vdots \\
		x'_{k_r,j_1} & y_{k_r,\ell_2} + x'_{k_r,j_1} x'_{i_1,\ell_2} & \cdots &  y_{k_r,\ell_r} + x'_{k_r,j_1} x'_{i_1,\ell_r} 
		\end{matrix}
		\right)
		=
		$$
		
		\medskip
	
		$$
		=
		\det \left(
		\begin{matrix}
		x'_{k_1,j_1} & y_{k_1,\ell_2} & \cdots &  y_{k_1,\ell_r} \\
		x'_{k_2,j_1} & y_{k_2,\ell_2}  & \cdots &  y_{k_2,\ell_r}  \\
		\vdots & \vdots & \ddots & \vdots \\
		x'_{k_r,j_1} & y_{k_r,\ell_2} & \cdots &  y_{k_r,\ell_r}  
		\end{matrix}
		\right)
		\in \mathcal{I}. 
		$$ 
		In the last equality, we apply elementary column operations and in order to see that the last element is contained in $ \mathcal{I} $, we expand the determinant with respect to the first column.
	The arguments in the other cases are similar and we leave the details to the reader.	 
\end{proof}

\smallskip

In general, a determinantal variety is not necessarily defined via a generic matrix.
One may allow arbitrary polynomial entries for $ M = (f_{i,j})_{i,j} $ with $ f_{i,j} \in R_0 [x_1, \ldots, x_N ] $
($ R_0 $ as before).
For recent results studying these singularities from various viewpoints (over a field of characteristic zero),
we refer to \cite{BruceTari,Anne16,AnneNeumer,AnneMatthias,GaffneyToni,GoryMond,PereiraRuas}.
An interesting problem would be to investigate which assumptions need to be imposed on $ R_0 $ and the entries $ f_{i,j} $ in order to construct a characteristic-free embedded resolution for this type of singularities.
One new technical detail that one has to take care of is then the choice of a standard basis for the defining ideal which has been immediate in the generic case.

\medskip

%
%
%
%
%
%
%
%
%
%
%
%
%
%

\label{Literature}

\newcounter{zaehler}
\setcounter{zaehler}{1}
\newcommand{\art}{\thezaehler}
 

\end{document}